\documentclass[journal,onecolumn,11pt]{IEEEtran}

\ifCLASSINFOpdf
\else
\fi

\usepackage{amsmath}
\usepackage{amssymb}
\usepackage{graphicx}
\usepackage{color}
\usepackage{dsfont,latexsym}
\usepackage{tikz}
\usepackage{algorithm,algpseudocode}
\usepackage{enumitem}

\newtheorem{lemma}{Lemma}
\newtheorem{remark}{Remark}
\newtheorem{theorem}{Theorem}

\newtheorem{proposition}{Proposition}

\newtheorem{definition}{Definition}

\allowdisplaybreaks

\begin{document}
\title{On hyperbolic PDEs, filtered feedback control laws, and fractal-like stability crossing curves}

 \author{Wim Michiels, Federico Bribiesca-Argomedo, and Jean Auriol
\thanks{Wim Michiels is with the Department of Computer Science, KU Leuven, B-3001 Heverlee, Belgium, wim.michiels@cs.kuleuven.be.}
\thanks{Federico Bribiesca-Argomedo is with  INSA Lyon, Universite Claude Bernard Lyon 1, Ecole Centrale de Lyon, CNRS, Ampère, UMR5005, 69621 Villeurbanne, France,  federico.bribiesca-argomedo@insa-lyon.fr}
 \thanks{J. Auriol is with Université Paris-Saclay, CNRS, CentraleSupélec, Laboratoire des Signaux et Systèmes, 91190 Gif-sur-Yvette, France, jean.auriol@centralesupelec.fr.} 
 }

% The paper headers
\markboth{}{S}%

\maketitle

% As a general rule, do not put math, special symbols or citations
% in the abstract or keywords.
\begin{abstract}
The paper addresses the boundary control of a class of hyperbolic PDEs, based on an equivalent representation in terms of an integral-difference equation. The situation is considered where direct compensation of reflection terms induces a fragile closed-loop system, in the sense of lack of strong stability. This is theoretically resolved by adding a low-pass filter to the control law, but the choice of its cut-off frequency is crucial in balancing robustness at high frequencies and performance at low frequencies. First, the maximum stability interval in parameter $T$ is determined, with $T$ the inverse of the filter's cutoff frequency. Next, model mismatch on the PDE parameters is considered and a sufficient stability condition is derived in terms of allowable mismatch and cut-off frequency, satisfied in a region in the combined parameter space with a conic shape around $T=0$. Finally, this qualitative behavior is confirmed by exact stability charts for a special case where all model mismatch is contained into one parameter.  It is highlighted that the set of stability crossing curves exhibits a fractal-like structure, which is explained using a limit system with discrete delays.
\end{abstract}

% Note that keywords are not normally used for peerreview papers.
\begin{IEEEkeywords}
Hyperbolic systems, integral difference equations, filtering, robustness margins
\end{IEEEkeywords}

% For peer review papers, you can put extra information on the cover
% page as needed:
% \ifCLASSOPTIONpeerreview
% \begin{center} \bfseries EDICS Category: 3-BBND \end{center}
% \fi
%
% For peerreview papers, this IEEEtran command inserts a page break and
% creates the second title. It will be ignored for other modes.
\IEEEpeerreviewmaketitle

\section{Introduction}

\subsection{System under consideration}
{W}e consider in this paper the boundary control of general linear balance laws, which naturally arise in the modeling of physical phenomena involving transport or propagation~\cite{bastin2016stability}. Such systems appear in a wide range of applications, including drilling devices~\cite{auriol2022comparing,auriol2020closed}, water management networks~\cite{diagne2017control}, aeronomy~\cite{schunk1975transport}, and traffic flow systems~\cite{espitia2022traffic}. More precisely, linear hyperbolic balance laws can be modeled as
\begin{equation} \label{eq:hyperbolic_couple}
\left\{
\begin{aligned}
&z_t(t, y)+\Lambda z_y(t, y)=\Sigma z(t,y), \quad t>0,~ y \in[0,1], \\
&z_+(t, 0)=Q z_-(t, 0),\quad z_-(t, 1)=R z_+(t, 1)+u(t), %\quad t>0,
\end{aligned}
\right.
\end{equation}
where $z(t,y)=(z^\top_+(t,y),z^\top_-(t,y))^\top \in\mathbb{R}^{k+\ell}$ is the state, with $k,\ell \in \mathbb{N}$. 
The components $z_+ \in\mathbb{R}^{k}$ and $z_- \in\mathbb{R}^{\ell}$ represent, respectively, the rightward and leftward propagating states. 
The diagonal matrix $\Lambda$ is given by $\Lambda=\mathrm{diag}(\lambda_1,\dots,\lambda_{k},-\mu_1,\dots,-\mu_\ell),$
with ordered velocities
\begin{equation} \label{eq:original_order}
-\mu_\ell<\cdots<-\mu_1<0<\lambda_1<\cdots<\lambda_{k}.
\end{equation}
The constant matrix $\Sigma$ accounts for in-domain couplings and is decomposed as
\begin{equation}
    \Sigma=\begin{pmatrix}
        \Sigma^{++} &  \Sigma^{+-}\\[2mm]  
        \Sigma^{-+} &  \Sigma^{--}
    \end{pmatrix},
\end{equation}
with $\Sigma^{++} \in \mathbb{R}^{k\times k}$, $\Sigma^{+-} \in \mathbb{R}^{k\times \ell}$, $\Sigma^{-+} \in \mathbb{R}^{\ell\times k}$, and $\Sigma^{--} \in \mathbb{R}^{\ell\times \ell}$. Note that the results of this paper can be extended to continuously differentiable functions $\Sigma^{\cdot \cdot}$. Without any loss of generality~\cite{hu2015control}, we assume that $\Sigma_{ii}=0$ for all $1\leq i\leq k+\ell$.
The constant matrices $Q\in\mathbb{R}^{k\times \ell}$ and $R\in\mathbb{R}^{\ell\times k}$ correspond to boundary couplings, while the control input is denoted {$u(t)\in \mathbb{R}^{\ell}$.} 
The initial conditions $(u_0,v_0)$ are assumed to belong to $(H^1([0,1],\mathbb{R}))^{k}$ and $(H^1([0,1],\mathbb{R}))^{\ell}$, respectively. They satisfy the classical zero-order compatibility conditions, so that the open-loop system is well-posed~\cite{bastin2016stability}. 
We recall the following definition for $L^2$-stability.
 %For continuous control inputs, the system~\eqref{eq:hyperbolic_couple} is well-posed~\cite{bastin2016stability} 
%{\color{blue}H1 or L2?}
\begin{definition} \label{Def_stability}
    The zero solution of~\eqref{eq:hyperbolic_couple} is exponentially stable if there exists $\kappa_0>0$ and $\eta_0>0$ such that for any initial condition  $(u_0,v_0) \in (L^2([0,1],\mathbb{R}))^{k+\ell}$, %that satisfies the zero-order compatibility conditions $u_0(0)=Qv_0(0)$ and $v_0(1)=Ru_0(1)$
     we have
    \begin{align}
        ||(u,v)||_{L^2} \leq \kappa_0 \mathrm{e}^{-\eta_0t}||(u_0,v_0)||_{L^2}. 
    \end{align}
\end{definition}

Stabilizing controllers for the system~\eqref{eq:hyperbolic_couple} have been proposed in the literature using Lyapunov-based approaches~\cite{bastin2016stability} or the backstepping methodology~\cite{coron2017finite}.
Comprehensive overviews of current research in this field can be found in~\cite{hayat2021boundary}, and \cite{vazquez2024backstepping}.

%The backstepping approach has enabled significant breakthroughs for the stabilization of such systems. 
\subsection{Time-delay representation}
The connection between hyperbolic systems and time-delay systems was first established through D'Alembert's formula~\cite{DAlembert}. This relation extends via the method of characteristics, which allows certain classes of linear conservation laws to be represented as difference equations. In~\cite{russell1991neutral}, the existence of a mapping between the solutions of first-order hyperbolic partial differential equations and difference equations is demonstrated using spectral methods. This mapping is shown to be unique, and an explicit construction is provided in~\cite{karafyllis2014relation} for a single hyperbolic equation with a reaction term. This result was later generalized in~\cite{Auriol2019a} for systems of balance laws. The analysis and control design developed in this paper rely on such a comparison model for~(\ref{eq:hyperbolic_couple}), expressed in the form of the integral-difference equation (IDE) 
\begin{align}
x(t)=\sum_{i=1}^k \sum_{j=1}^{\ell} H_{ij} x\left(t-(r_i+s_j)\right)+\int_{-\tau}^0 w_1(\theta) x(t+\theta)d\theta  +u(t),\label{sys-final}
\end{align}
where {$x(t)\in \mathbb{R}^\ell$}, matrices $H_{ij}$ and the piecewise continuously differentiable kernel function $w_1$ are determined by the PDE parameters and the delays satisfy {
\[
r_i=1/\lambda_i,\ 1\leq i\leq k,\ \  s_j=1/\mu_j,\ 1\leq j\leq \ell,\ \  \tau=\frac{1}{\lambda_1}+\frac{1}{\mu_1}.
\]
For the derivation of this comparison model, rooted in integral transformations, we refer to~\cite{Auriol2019a} or Appendix~\ref{apA}. {Note that the fact that the dimension of $x$ is equal to $\ell$ in the IDE~\eqref{sys-final} is a direct consequence of the PDE formulation from which the system originates. Nevertheless, all the results presented in this paper remain valid for an IDE whose state $x(t)$ has an arbitrary dimension $m \in \mathbb{N}$}. 
{For IDE~\eqref{sys-final} let $x_s$ denote the value of the state $x$ over the time interval $t\in[s-\tau, s]$, with associated norm $||x_s||_{L^2_\tau}=\left(\int_{-\tau}^{0}\vert\vert x(s+r)\vert\vert_2^2 dr\right)^{1/2}$.} 
The exponential stability of (\ref{sys-final}) is defined as follows.
\begin{definition}\label{def_exp_stab_IDE}
    The zero solution of  IDE system~\eqref{sys-final} is $L^2_\tau$ exponentially stable if there exists $\kappa_0>0$ and $\eta_0>0$ such that for any initial condition $x_\tau \in L^2_\tau$, we have for all $t>\tau$
\begin{align}
||x_t||_{L^2_\tau} \leq \kappa_0\mathrm{e}^{-\eta_0 t}||x_\tau||_{L^2_{\tau}}
\end{align}
\end{definition}
Systems  \eqref{eq:hyperbolic_couple} and (\ref{sys-final}) have equivalent stability properties as shown in~\cite[Theorem 6.3.1]{auriol2024contributions}.
\begin{proposition}\label{Lemma_equiv_stab}
    Exponential stability of~\eqref{sys-final} in the sense of Definition~\ref{def_exp_stab_IDE} is equivalent to exponential stability of~\eqref{eq:hyperbolic_couple} in the sense of Definition~\ref{Def_stability}.
\end{proposition}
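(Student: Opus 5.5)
The plan is to use the chain of invertible transformations that produces the comparison model (Appendix~\ref{apA} and \cite{Auriol2019a}), and to show that each link preserves $L^2$-exponential stability in both directions. The chain has three links. First, a Volterra (backstepping) transformation $z\mapsto \gamma$ maps \eqref{eq:hyperbolic_couple} into a target hyperbolic system in which the in-domain couplings $\Sigma$ are moved into boundary integral terms. Second, the method of characteristics expresses the target state in terms of the boundary value $x(t)=\gamma_-(t,1)$ (or an equivalent boundary quantity) over a past window of length $\tau$. Third, the IDE \eqref{sys-final} is satisfied by this $x$.

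The argument consists of two norm-equivalence statements at each time $t$. The first states that the Volterra transformation is a bounded operator on $(L^2([0,1]))^{k+\ell}$ with bounded inverse. Its kernels are bounded and piecewise continuous, and Volterra operators of the second kind are always invertible. Hence $\|z(t)\|_{L^2}$ and $\|\gamma(t)\|_{L^2}$ are equivalent up to constants.

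The second statement concerns the target system, and I will show there exist $c_1,c_2>0$ with
\[
c_1\|\gamma(t)\|_{L^2}\le \|x_t\|_{L^2_\tau}\le c_2\|\gamma(t)\|_{L^2},\qquad t\ge \tau .
\]
The lower bound follows by writing each component $\gamma_{+,i}(t,y)$ and $\gamma_{-,j}(t,y)$ along characteristics. Each is a finite combination of values $x(t-\theta)$ with $\theta\in[0,\tau]$, plus integrals of $x$ over that window. Changing variables $y\leftrightarrow\theta$ produces Jacobians $\lambda_i$ and $\mu_j$, which gives an $L^2$ bound. The upper bound uses the converse representation: $x$ on the window $[t-\tau,t]$ is recovered from the traces of $\gamma$ along characteristics crossing $[0,1]$ during that interval. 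Here $\tau=r_1+s_1$ is exactly the time for the slowest round trip, so the whole window is covered.

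With these two equivalences, the implication \eqref{sys-final} stable $\Rightarrow$ \eqref{eq:hyperbolic_couple} stable is obtained by chaining the inequalities for $t\ge\tau$. On the initial interval $[0,\tau]$, I use the standard open-loop finite-time bound $\|z(t)\|\le Ce^{\omega t}\|z_0\|$ from well-posedness. I also need $\|x_\tau\|_{L^2_\tau}\le C\|z_0\|_{L^2}$, which follows from the same characteristic argument applied to the initial data. The converse direction is symmetric: the bound on $\|x_t\|$ follows from that on $\|z(t)\|$, and exponential decay transfers. Density of compatible $H^1$ data in $L^2$ handles the mismatch between the two definitions' admissible initial conditions.

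I expect the main obstacle to be the upper bound $\|x_t\|_{L^2_\tau}\lesssim\|\gamma(t)\|_{L^2}$, i.e. showing that the window of $x$ is controlled by the current PDE state and not by earlier history. This requires checking that the map from $x_t$ to $\gamma(t)$ is not only bounded but boundedly invertible, since the reflection terms $H_{ij}$ mix delayed values. I would try to establish it by induction over the ordered delays $r_i+s_j$ in \eqref{eq:original_order}, peeling off one characteristic family at a time. If a direct inverse is unavailable, I would instead cite \cite[Theorem 6.3.1]{auriol2024contributions} for this step.
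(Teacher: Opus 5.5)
Your overall route (bounded invertibility of the integral transformations, plus a characteristic representation linking the target state to the boundary trace $x$) is the one behind \cite[Theorem 6.3.1]{auriol2024contributions}, which the paper simply cites. However, the step you single out as the main obstacle is not merely hard; it is false. There is in general no estimate $\|x_t\|_{L^2_\tau}\le c_2\|\gamma(t)\|_{L^2}$, because the characteristic map $\Psi: x_t\mapsto\gamma(t)$ is not injective.

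Take $k=\ell=1$, $\Sigma=0$ (so $\gamma=z$), $Q=0$, $R\neq 0$, $u=0$, $z_-(0,\cdot)=0$ and $z_+(0,\cdot)=f\neq 0$. Then $x(t)=Rf(1-\lambda_1 t)$ on $[0,r_1)$ and $x\equiv 0$ afterwards. Hence $z(\tau)=0$ while $x_\tau\neq 0$. Even for full-rank $Q$, as soon as $\ell\ge 2$ (already with $\Sigma=0$), the formulas for $\gamma(t)$ do not involve $x_j$ on $[t-\tau,\,t-r_1-s_j)$ for $j\ge 2$. That part of the signal has already left the domain through $y=1$.

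Your verbal description, recovering the window from characteristics crossing $[0,1]$ \emph{during} $[t-\tau,t]$, uses the trajectory of $\gamma$ on that interval, not the snapshot $\gamma(t)$. So no induction over the delays can produce a bounded inverse, and deferring this step to \cite{auriol2024contributions} does not help. The correct replacement is the time-shifted estimate $\|x_t\|_{L^2_\tau}\le C\|\gamma(t-\tau)\|_{L^2}$ for $t\ge 2\tau$. It holds because $x$ on $[t-\tau,t]$ is the $y=1$ trace of the solution issued from $\gamma(t-\tau)$, and boundary traces of $L^2$ solutions are bounded in $L^2$ on bounded time intervals (again by characteristics). Decay then transfers at the cost of a factor $e^{\eta_0\tau}$.

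A second gap lies in the direction PDE $\Rightarrow$ IDE. Definition~\ref{def_exp_stab_IDE} requires the estimate for \emph{every} $x_\tau\in L^2_\tau$. The windows produced by PDE initial data form in general a proper closed subspace; in the example above, all of them vanish on $(r_1,\tau]$. So density of compatible $H^1$ data does not reach the remaining initial conditions.

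What is needed is the following. Start the target PDE at time $\tau$ from $\Psi(x_\tau)$; your valid bound gives $\|\Psi(x_\tau)\|_{L^2}\le C\|x_\tau\|_{L^2_\tau}$. Then verify that the IDE solution issued from an arbitrary $x_\tau$ coincides, for $t>\tau$, with the $y=1$ trace of that PDE solution. This holds because the right-hand side of \eqref{sys-final} is obtained by inserting the characteristic representation into the boundary condition at $y=1$, so it depends on $x_t$ only through $\Psi(x_t)$. This is a property of how the IDE is derived, not a consequence of PDE traces satisfying it.

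With this identification and the shifted trace estimate (plus a finite-horizon bound on $[\tau,2\tau]$), PDE decay yields IDE decay. Your direction IDE $\Rightarrow$ PDE is fine as written, since it only uses $\|\gamma(t)\|\le C\|x_t\|$, $\|x_\tau\|\le C\|z_0\|$ and a finite-time bound.
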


\subsection{Stabilizing controller and robustness aspects}
In what follows, we assume that the open-loop system is not exponentially stable. {Note that the analysis presented here can be directly extended to systems where the control objective is to guarantee a decay rate that is not met by the open loop system, by a simple shift of the operator under consideration. For simplicity, only the results concerning the goal of exponential stabilization are presented here.}  The starting point of the controller design is a nominal law  for  (\ref{sys-final}) of the form
\begin{equation}\label{control-nominal}
u(t)=-\sum_{i=1}^k \sum_{j=1}^{\ell} H_{ij} x\left(t-(r_i+s_j)\right)-\int_{-\tau}^0 w_2(\theta) x(t+\theta)d\theta,
\end{equation}
which is based on canceling out the reflection terms {on the actuated boundary of the PDE} and leads to  \emph{target closed-loop dynamics} of retarded type, described by
\begin{equation}\label{sys-target}
x(t)=\int_{-\tau}^0 \left(w_1(\theta)-w_2(\theta) \right)x(t+\theta)d\theta
\end{equation}
and assumed exponentially stable.
The special case of $w_1=w_2$ in (\ref{control-nominal}) has been addressed in \cite{coron2017finite}, where fixed-time stability is obtained by compensating all dynamics.  Note that at the PDE level, this control law translates into imposing a homogeneous Dirichlet boundary condition for the leftward propagating states at $y=1$, see Section~\ref{parfixedtime}. 
However, the nominal controller (\ref{control-nominal}) may lack robustness. Moreover, as highlighted in~\cite{Auriol2019a,auriol2020robust,prevpaper}, canceling all reflection terms may result in \emph{zero delay-robustness margins}. %and poor tolerance to parameter uncertainties. 
Consequently, alternative strategies have been proposed such as partial reflection cancellation~\cite{auriol2018delay}, or the addition of a well-tuned low-pass filter~\cite{Auriol2023,prevpaper}.   We adopt the latter approach,  which is a natural solution since the fragility of stability stems from the non-strictly-proper nature of the control operator~\cite{wimfilter}, see also \cite{extraH,w-stable}. The inclusion of a first-order low-pass filter brings us to the control law
\begin{equation}
	T\dot u(t)+ u(t)=-\sum_{i=1}^k \sum_{j=1}^{\ell} H_{ij} x\left(t-(r_i+s_j)\right)
    -\int_{-\tau}^0 w_2(\theta) x(t+\theta)d\theta,\label{controlfinal}
\end{equation}
with $1/T$ the cutoff frequency of the filter.

\subsection{Objective and contributions}
In this paper we first derive necessary and sufficient conditions for which the direct application of (\ref{control-nominal}) leads to a fragile closed-loop system, in the sense of zero stability margins, and we show that the fragility issues are resolved by using control law (\ref{controlfinal}) with $T>0$ sufficiently small, where we rely on assessing the so-called filtered spectral abscissa~\cite{prevpaper}.   Our main focus is then on the qualitative and \emph{quantitative} characterizations of the choice of filter parameter  $T$, in relation to the allowable parametric uncertainty and delay margins.  The analysis highlights  the flexibility of the filter, namely high performance at low frequencies while guaranteeing robustness margins at high frequencies, thereby enabling an effective trade-off between performance and robustness.

\smallskip

The remainder of the article is structured as follows.
In Section~\ref{secstrong}, we adapt the notion of strong stability \cite{have:02,wimneutral2} to the structure of Equation (\ref{sys-final}) and we derive necessary and sufficient conditions that fully exploit the inter-dependence of the discrete delays. In Section~\ref{secfilter}, we show the stability of the closed-loop system (\ref{sys-final}) and (\ref{controlfinal}) for small $T>0$ and characterize the maximum stability interval in parameter $T$, leading to a frequency-sweeping test. In Section~\ref{secrobust}, on uncertainty quantification, we first derive sufficient conditions for closed-loop stability as a function of the combined PDE parameters, whose perturbations are translated into comparison system (\ref{controlfinal}), and the filter parameter $T$. Next, we address in depth a special class of problems, for which we characterize and compute stability crossing curves as a function of delay-mismatch and cut-off frequency of the filter.  The results uncover a distinctive, \emph{fractal-like structure} in the stability charts, an observation that, to the best of our knowledge, has not been previously reported in the literature. Finally, we state some concluding remarks in Section~\ref{secconcl}.

%%%%%%%%%%%%%%%%%%%%%%%%%%%%%%%%%%%%%%

\section{Strong stability condition of the open-loop system}\label{secstrong}

The zero solution of (\ref{sys-final}) with $u=0$ is exponentially stable if and only if the spectral abscissa
\[
c_0(\vec r,\vec s)\triangleq \sup_{\lambda\in\mathbb{C}}\left\{\Re(\lambda):\ \det \left(-I+\sum_{i=1}^k\sum_{j=1}^\ell H_{ij}e^{-\lambda(r_i+s_j)}\right. \right.
\left. \left.+\int_{-\tau}^0 w_1(\theta)e^{\lambda\theta}d\theta \right)=0  \right\}, 
\]
is negative, where we stress its dependence on delays $\vec r$ and $\vec s$ in the notation. It is well known that the exponential stability of delay-difference equations may be sensitive to \emph{infinitesimal} delay perturbations, which has led to the notion of strong stability in~\cite{have:02}.  Such fragility problems are invariably connected with the behavior of characteristic roots with large imaginary parts (see, e.g., \cite[Chapter 1]{bookwim}), which are in turn determined by the spectrum of the associated delay-difference equation
\begin{equation}\label{difference-open}
x(t)=\sum_{i=1}^k \sum_{j=1}^{\ell} H_{ij} x\left(t-(r_i+s_j)\right),
\end{equation}
as a consequence of the low-pass filtering property of the integral term in (\ref{sys-final}). Thus, only  perturbations of the discrete delays need to be considered in the analysis of fragility of stability. A particularity of (\ref{sys-final}) is that the $k\times \ell$ discrete delays do not correspond to independent parameters, but are linear combinations of $k+\ell$ parameters in the original PDE model. In what follows, we take this kind of dependence into account in the definition of strong stability.
\begin{definition}
The zero solution of (\ref{sys-final}) with $u=0$ is strongly  stable if there exist numbers $\epsilon_1>0$ and $\epsilon_2>0$ such that 
\[
c_0\left(\vec r+\Delta \vec r,\vec s+\Delta\vec s\right)<-\epsilon_2
\]
for all $\Delta\vec r\in \mathbb{R}^k$ and  $\Delta\vec s\in \mathbb{R}^\ell$ satisfying $\| \Delta\vec r\|_2<\epsilon_1$, $\| \Delta\vec s\|_2<\epsilon_1$, $r_i+\Delta r_i\geq 0,\ 1\leq i\leq k$, and 
 $s_j+\Delta s_j\geq 0,\ 1\leq j\leq \ell$.
\end{definition}
The specific  delay-dependency structure in (\ref{sys-final}) is within the scope of the analysis in \cite{wimneutral2}. Letting $\rho(\cdot )$ denote the spectral radius of a matrix, we can state the following strong stability condition.
\begin{proposition}
The zero solution of (\ref{sys-final}) with $u=0$ is strongly stable if and only if $c_0(\vec r,\vec s)<0$ and $\gamma_0<1$, with
\begin{equation}\label{defgamma0}
\gamma_0\triangleq %\max_{\vec\theta\in [0,\ 2\pi]^{k}} \max_{\vec\nu\in [0,\ 2\pi]^{\ell}}
\max_{\vec\theta\in [0,\ 2\pi]^{k},\ \vec\nu\in [0,\ 2\pi]^{\ell}}
  \rho \left( \sum_{i=1}^k \sum_{j=1}^{\ell} H_{ij} e^{\imath(\theta_i+ \nu_j) } \right).
\end{equation}
 \end{proposition}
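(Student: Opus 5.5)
The plan is to reduce the statement to the known strong-stability theory for neutral or difference equations with structured delay dependence, as in \cite{wimneutral2,have:02}, and to exploit the particular form $r_i+s_j$ of the delays. There are three steps: isolate the difference equation (\ref{difference-open}) as the part that governs roots with large imaginary parts; show that the supremum over small delay perturbations of the spectral abscissa of (\ref{difference-open}) is governed by $\gamma_0$; and combine the two with a continuity argument for the low-frequency roots.

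\emph{Step 1 (high-frequency roots).} The integral term $\hat w_1(\lambda)=\int_{-\tau}^0 w_1(\theta)e^{\lambda\theta}d\theta$ tends to zero as $|\Im\lambda|\to\infty$, uniformly on vertical strips. This follows from a Riemann--Lebesgue argument, or from one integration by parts since $w_1$ is piecewise $C^1$, which gives $O(1/|\lambda|)$ decay. A Rouché argument on small disks then shows that, for any $\epsilon>0$, the roots of the full characteristic equation with $\Re\lambda\ge -\epsilon$ and $|\Im \lambda|$ large lie close to the roots of $\det(-I+\sum H_{ij}e^{-\lambda(r_i+s_j)})=0$. The same estimate holds uniformly for delays in a neighborhood of $(\vec r,\vec s)$. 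In the region $|\Im\lambda|\le M$ the roots depend continuously on the delays, and there are finitely many of them. Consequently, $c_0<0$ is preserved under small perturbations if and only if the high-frequency part stays uniformly to the left.

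\emph{Step 2 (the difference equation).} Following \cite{wimneutral2}, I would show that
\[
\lim_{\epsilon\to0}\ \sup_{\|\Delta\vec r\|,\|\Delta\vec s\|<\epsilon} c_D(\vec r+\Delta\vec r,\vec s+\Delta\vec s)=\bar c,
\]
where $\bar c$ is the unique zero of $\gamma(c)-1$, with $\gamma(c)=\max_{\theta,\nu}\rho(\sum H_{ij}e^{-c(r_i+s_j)}e^{\imath(\theta_i+\nu_j)})$ a strictly decreasing function. In particular, $\bar c<0$ if and only if $\gamma(0)=\gamma_0<1$.

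For necessity, suppose $\gamma_0\ge1$ and take a maximizer $(\vec\theta,\vec\nu)$. Perturb the delays to rationally independent values $r_i+\Delta r_i$, $s_j+\Delta s_j$ such that, together with $1$, they are independent over $\mathbb{Q}$. Kronecker's theorem then yields $\omega$ arbitrarily large with $\omega(r_i+\Delta r_i)\approx-\theta_i$ and $\omega(s_j+\Delta s_j)\approx-\nu_j$ mod $2\pi$, which is the point where the structure $e^{\imath(\theta_i+\nu_j)}$ matters. An eigenvalue of modulus $\ge1$ then produces, by a perturbation or Rouché argument, characteristic roots with real part arbitrarily close to $\ge 0$. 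This contradicts the uniform bound $-\epsilon_2$.

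For sufficiency, if $\gamma_0<1$, continuity of $\gamma$ gives $\gamma(-\delta)<1$ for some $\delta>0$. For $\Re\lambda\ge-\delta$, we write $e^{-\lambda(r_i+s_j)}=e^{-\Re\lambda(r_i+s_j)}e^{\imath(\theta_i+\nu_j)}$ with $\theta_i=-\Im\lambda\, r_i$ and $\nu_j=-\Im\lambda\, s_j$. The matrix $I-\sum H_{ij}e^{-\lambda(r_i+s_j)}$ is nonsingular because the spectral radius of the sum is below one. This holds for all delays, since perturbations only change the phases, which are already maximized over. A small correction is needed because the moduli $e^{-\Re\lambda(r_i+s_j)}$ vary slightly with the perturbation, and the continuity of $\gamma$ absorbs it.

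\emph{Step 3 (conclusion).} Sufficiency follows by combining Step 2, which gives a uniform root-free region $\Re\lambda\ge-\delta$ at high frequencies with $\hat w_1$ small there, with continuity of the finitely many low-frequency roots under delay perturbations, starting from $c_0<0$. Necessity of $c_0<0$ is trivial, and necessity of $\gamma_0<1$ is Step 2 combined with Step 1.

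The main obstacle is the necessity direction. We need the Kronecker argument applied to the $k+\ell$ independent parameters rather than to the $k\ell$ delays, and we must show that the resulting near-roots of the difference equation persist as true roots of the full equation with real parts approaching a nonnegative value. Here I would cite the structured-delay result of \cite{wimneutral2} directly, observing that the delay map $(\vec r,\vec s)\mapsto(r_i+s_j)$ is linear with integer coefficients, which is exactly the setting treated there.
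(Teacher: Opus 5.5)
Your proposal is correct and takes essentially the same route as the paper. The paper uses a Rouch\'e argument to transfer the high-frequency spectrum of (\ref{sys-final}) to that of the delay-difference equation (\ref{difference-open}), and then applies the structured-delay strong stability theory of \cite{wimneutral2}. The paper simply invokes Proposition 3.10 and Theorem 4.3 there, whereas you sketch the underlying arguments: Kronecker density applied to the $k+\ell$ basic parameters, and the monotone function $\gamma(c)$.
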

\noindent\textbf{Proof.\ }  The correspondence in Equation (2.7) of \cite{wimneutral2} between the radius of the essential spectrum of the solution operator of a neutral equation with discrete delay and the spectral radius of the solution operator of the associated delay-difference equation carries over to the relation between the spectra of (\ref{sys-final}) and (\ref{difference-open}), which can be shown using Rouché's theorem.  The proof of the proposition then follows from a 
combination of  Proposition 3.10 and Theorem 4.3 in \cite{wimneutral2}.
\hfill $\Box$

\medskip

Note that the additional requirement in (\ref{defgamma0}) for strong stability (on top of a negative spectral abscissa) imposes strong stability of the delay-difference equation (\ref{difference-open}).
In contrast, the sufficient strong stability condition from the literature (see, e.g., \cite{Auriol2019a}) supplements a negative spectral abscissa with
\begin{equation}\label{stronglit}
\max 
\left\{  \rho \left( \sum_{i=1}^k \sum_{j=1}^{\ell} H_{ij} e^{\imath \theta_{ij}}  \right):\  \theta_{ij}\in[0,\ 2\pi] \right\}<1.
\end{equation}
It is more restrictive than condition $\gamma_0<1$ if $k\ell > k+\ell $, because expression (\ref{stronglit}) relies on treating the  delays in  every term of the right-hand side of (\ref{difference-open}) as independent variables.

\section{Filtered control law and closed-loop system}
\label{secfilter}

%Our starting point is an exponentially  stabilizing \emph{nominal} control law of the form
%\begin{equation}\label{control-nominal}
%u(t)=-\sum_{i=1}^k \sum_{j=1}^{\ell} H_{ij} x\left(t-%(r_i+s_j)\right)-\int_{-\tau}^0 w_2(\theta) x(t+\theta)d\theta,
%\end{equation}
%which is based on canceling out he reflection terms in (\ref{sys-%final}) and leads to  \emph{target closed-loop \,dynamics} of %retarded type, described by
%\begin{equation}\label{sys-target}
%x(t)=\int_{-\tau}^0 \left(w_1(\theta)-w_2(\theta) \right)x(t+%\theta)d\theta.
%\end{equation}

As a major complication, preventing a direct application of (\ref{control-nominal}), reflection terms cannot be safely canceled out, as already pointed out in \cite{have:02}. Small modeling errors on the delays, implementation errors, as well as communication and computation delays, will induce deviations between the values of $r_i$ and $s_i$ in the open-loop system, and estimates used in the control law. More precisely, if delay estimates $\hat r_i,\ i=1,\ldots,k$ and $\hat s_j,\ j=1,\ldots,\ell$ are used in the control law instead of the delays in the plant model, the delay-difference equation associated with the closed-loop system becomes
\[
x(t)=\sum_{i=1}^k \sum_{j=1}^{\ell} H_{ij} x\left(t-(r_i+s_j)\right)
- H_{ij} x\left(t-(\hat r_i+\hat s_j)\right).
\]
In the analysis of fragility of stability, the delays in the open-loop system and control law must be considered as independent parameters, even though their nominal values coincide. Hence, the target closed-loop system is strongly stable if and only if $c_{\mathrm{CL}}<0$, where 
\begin{align}
c_{\mathrm{CL}}\triangleq\sup_{\lambda\in\mathbb{C}}
\left\{\Re(\lambda): \det\left(-I+ \int_{-\tau}^0 \left(w_1(\theta)-w_2(\theta) \right)e^{\lambda\theta}d\theta\right)=0 \right\},\label{defCL}
\end{align}
is the spectral abscissa of (\ref{sys-target}), supplemented with the condition $\gamma_1<1$, with
\begin{align}\label{defgamma1}
\gamma_1\triangleq \max_{\scriptscriptstyle\begin{array}{l}\vec\theta\in [0,\ 2\pi]^{k}\\ \vec\nu\in [0,\ 2\pi]^{\ell}\end{array}}
\max_{\scriptscriptstyle\begin{array}{l}\vec\vartheta\in [0,\ 2\pi]^{k} \\ \vec\mu\in [0,\ 2\pi]^{\ell}\end{array}}
  \rho \left( \sum_{i=1}^k \sum_{j=1}^{\ell} H_{ij} e^{\imath(\theta_i+ \nu_j) }- H_{ij} e^{\imath(\vartheta_i+\mu_j)} \right).
\end{align}
It is important to point out that $\gamma_1$ is always an upper bound on  $\gamma_0$ and a strict upper bound if $\gamma_0\neq 0$.
\begin{proposition}
It holds that $\gamma_1\geq 2\gamma_0$.
\end{proposition}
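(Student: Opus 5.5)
Let $M(\vec\theta,\vec\nu)\triangleq\sum_{i=1}^k\sum_{j=1}^{\ell}H_{ij}e^{\imath(\theta_i+\nu_j)}$. The argument of $\rho(\cdot)$ in (\ref{defgamma1}) is then $M(\vec\theta,\vec\nu)-M(\vec\vartheta,\vec\mu)$, where the two sets of angles range independently. So it suffices to exhibit one admissible choice of $(\vec\theta,\vec\nu,\vec\vartheta,\vec\mu)$ for which this difference equals $2M(\vec\theta^*,\vec\nu^*)$. Here $(\vec\theta^*,\vec\nu^*)$ is a maximizer in (\ref{defgamma0}), which exists because $\rho$ is continuous and the torus is compact.

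The key step is to realize $-M(\vec\theta^*,\vec\nu^*)$ through the second set of angles. Take $\vartheta_i=\theta^*_i+\pi$ for all $i$ and $\mu_j=\nu^*_j$ for all $j$, reduced modulo $2\pi$ so they lie in $[0,2\pi]$. Then every exponent satisfies
\[
e^{\imath(\vartheta_i+\mu_j)}=e^{\imath\pi}e^{\imath(\theta^*_i+\nu^*_j)}=-e^{\imath(\theta^*_i+\nu^*_j)}.
\]
Hence $M(\vec\vartheta,\vec\mu)=-M(\vec\theta^*,\vec\nu^*)$. The phase shift works uniformly across all $(i,j)$ because each exponent contains exactly one $\theta$-type angle, so shifting only the $\vartheta_i$ by $\pi$ flips the sign of every term. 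This is the one point needing care, since the dependent-delay structure prevents flipping each $H_{ij}$ term independently. It holds because $k\geq 1$.

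Choosing $\vec\theta=\vec\theta^*$ and $\vec\nu=\vec\nu^*$, the matrix in (\ref{defgamma1}) becomes $2M(\vec\theta^*,\vec\nu^*)$. Since $\rho(2A)=2\rho(A)$, we obtain $\gamma_1\geq\rho\bigl(2M(\vec\theta^*,\vec\nu^*)\bigr)=2\gamma_0$.

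The accompanying remark follows immediately. We have $\gamma_1\geq 2\gamma_0\geq\gamma_0$, with strict inequality $2\gamma_0>\gamma_0$ whenever $\gamma_0\neq0$.
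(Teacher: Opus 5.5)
Your proof is correct and is essentially the paper's argument: choose the second set of angles so that every term $H_{ij}e^{\imath(\vartheta_i+\mu_j)}$ acquires a phase shift of $\pi$, which turns the matrix in (\ref{defgamma1}) into $2M(\vec\theta,\vec\nu)$. The only cosmetic difference is that the paper splits the shift as $\pi/2$ on each of $\vartheta_i$ and $\mu_j$, whereas you put the full $\pi$ on the $\vartheta_i$; either choice gives the same sign flip.
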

\noindent\textbf{Proof.\ }
The minimal growth factor of two is already reached when restricting $\vec\vartheta$ and $\vec\mu$ to
\[
\begin{array}{l}
\vartheta_i= \left(\theta_i+\frac{\pi}{2}\right)\ \mathrm{mod} (2\pi),\ 1\leq i\leq k, \\
\mu_j= \left(\nu_j+\frac{\pi}{2}\right)\ \mathrm{mod} (2\pi),\ 1\leq j\leq \ell,
\end{array}
\]
in the maximization of the spectral radius in (\ref{defgamma1}). \hfill $\Box$

\medskip
 
In this article, we particularly target the situation where the open-loop system is unstable ($c_o\geq 0$) and
\begin{equation}\label{relgamma}
\gamma_0<1\leq \gamma_1, 
\end{equation}
that is, the delay-difference associated with the open-loop system is strongly stable, but directly canceling out the reflection terms by the control law leads to a fragile closed-loop system. A natural approach to avoid modifying the associated delay-difference equation (\ref{difference-open}) consists of adding a low-pass filter to the nominal control law. This leads us to the controller equation (\ref{controlfinal}).
%\begin{equation}\label{controlfinal}
%T\dot u(t)+ u(t)=-\sum_{i=1}^k \sum_{j=1}^{\ell} H_{ij} x\left(t-(r_i+s_j)\right)-\int_{-\tau}^0 w_2(\theta) x(t+\theta)d\theta,
%\end{equation}
%where $1/T$ is the cutoff frequency of the included first-order filter. 
%
\begin{remark}
In the case $\gamma_1<1$, the nominal, unfiltered control law already leads to a strongly stable closed-loop system provided that $c_{\mathrm{CL}}<0$, while in the case  $\gamma_0\geq 1$, strong stabilization is not possible.
\end{remark}

The characteristic equation of closed-loop system (\ref{sys-final}) and (\ref{controlfinal}) is given by 
$\det\Delta(\lambda;\ T)=0$,
with
\begin{multline}
\Delta(\lambda;\ T)\triangleq 
-I+\left(1-\frac{1}{1+\lambda T}\right)\sum_{i=1}^k\sum_{j=1}^\ell H_{ij}e^{-\lambda(r_i+s_j)}
+\int_{-\tau}^0 \left(w_1(\theta)-\frac{1}{1+\lambda T}w_2(\theta)\right)e^{\lambda\theta}d\theta \nonumber
\end{multline}
\subsection{Preservation of stability by the filter's inclusion}

The fragility problems related to delay perturbations, addressed above, are resolved by filtering the control law as in (\ref{controlfinal}), on the condition that the filter itself is not destabilizing. Even though stability for small values of $T$ cannot be guaranteed in general (see the counter examples in \cite{wimfilter,prevpaper}), the approach is effective if condition (\ref{relgamma}) is satisfied, which is a consequence of  the following theorem.

\begin{theorem} \label{thmmargin} There exists a number $\hat T>0$ such that the zero solution of (\ref{sys-final}) and (\ref{controlfinal}) is exponentially stable for all $T\in(0,\ \hat T)$ if and only if $c_{\mathrm{CL}}<0$ and $\gamma_0<1$, with  $c_{\mathrm{CL}}$ and $\gamma_0$ defined by (\ref{defCL}) and (\ref{defgamma0}), respectively.
\end{theorem}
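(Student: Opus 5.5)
The proof splits the characteristic roots of $\det\Delta(\lambda;T)=0$ into three frequency regimes as $T\to0^+$. We show that for small $T$ each regime contains no roots in a closed right half-plane $\Re\lambda\geq-\epsilon$ exactly when the two conditions hold. Multiplying by $1+\lambda T$ gives the equivalent quasi-polynomial matrix
\[
(1+\lambda T)\Delta(\lambda;T)=-(1+\lambda T)I+\lambda T\sum_{i,j}H_{ij}e^{-\lambda(r_i+s_j)}+\int_{-\tau}^0\bigl((1+\lambda T)w_1(\theta)-w_2(\theta)\bigr)e^{\lambda\theta}d\theta .
\]
This is a neutral-type equation with an additional root near $\lambda=-1/T$. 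The three regimes are:
\begin{itemize}
\item $|\lambda|T\ll1$ (low frequencies),
\item $|\lambda|T\gg1$ (high frequencies),
\item $|\lambda|\sim 1/T$ (intermediate frequencies).
\end{itemize}

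\emph{Sufficiency.} Assume $c_{\mathrm{CL}}<0$ and $\gamma_0<1$.
\begin{itemize}
\item On a compact set $|\lambda|\leq M$ with $\Re\lambda\geq-\epsilon$, $\Delta(\lambda;T)$ converges uniformly as $T\to0$ to the target characteristic matrix $-I+\int(w_1-w_2)e^{\lambda\theta}d\theta$. That limit has no zeros there by $c_{\mathrm{CL}}<0$, so Rouché's theorem excludes roots in this set.
\item For $|\lambda|\geq M$ the integral terms are $O(1/|\lambda|)$, since $w_1,w_2$ are piecewise $C^1$ and integration by parts applies. Writing $\phi=\lambda T/(1+\lambda T)$, the matrix becomes
\[
-I+\phi\sum_{i,j}H_{ij}e^{-\lambda(r_i+s_j)}+O(1/|\lambda|)
\]
uniformly in $T$.
\item For $\Re\lambda\geq-\epsilon$ we have $|e^{-\lambda(r_i+s_j)}|\le e^{\epsilon(r_i+s_j)}$, and $|\phi|\le |\lambda T|/|1+\lambda T|\le1$ whenever $\Re\lambda\ge -1/(2T)$. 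Hence the spectral radius of the delay part is at most $e^{\epsilon c}\gamma_0<1$, using the structure $e^{-\imath\omega(r_i+s_j)}=e^{\imath\theta_i}e^{\imath\nu_j}$ with $\theta_i=-\omega r_i$, $\nu_j=-\omega s_j$.
\item The key step is to extend this bound to complex $\phi$ of modulus $\le1$ and to $\Re\lambda$ slightly negative. For this I would use that $\rho(\phi A)=|\phi|\rho(A)$, and continuity of $\gamma_0$ under the factors $e^{-\Re\lambda(r_i+s_j)}$ near $1$.
\item Since $\rho<1$ does not by itself give invertibility of $-I+M+O(1/|\lambda|)$ uniformly, I would use compactness: the set of matrices $\{\sum H_{ij}a_i b_j e^{\imath(\theta_i+\nu_j)}\}$ with $a_i,b_j$ close to $1$ is compact. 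On it, $\rho<1$ gives a uniform bound $\|(I-\phi M)^{-1}\|\le C$ for $|\phi|\le1$, because $\det(I-\phi M)\neq0$ on a compact set. Choosing $M$ large then excludes roots for $|\lambda|\ge M$.
\item The remaining root near $-1/T$ lies far in the left half-plane.
\end{itemize}
These bounds hold for $T<\hat T$. They give a spectral abscissa $\leq-\epsilon$, which yields exponential stability of the IDE and, via Proposition~\ref{Lemma_equiv_stab}, of the PDE.

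\emph{Necessity.}
\begin{itemize}
\item If $c_{\mathrm{CL}}\geq0$, a target root $\lambda_0$ with $\Re\lambda_0\ge0$ persists for small $T$. The target equation is retarded, so $\lambda_0$ is isolated and its determinant is analytic. By uniform convergence and Hurwitz's theorem, nearby roots $\lambda_T\to\lambda_0$ exist, so stability cannot hold on a whole interval $(0,\hat T)$. The boundary case $\Re\lambda_0=0$ needs care. I would note that exponential stability for all $T\in(0,\hat T)$ together with roots converging to the imaginary axis is not itself a contradiction. So I would instead argue through the limit spectral abscissa, or restrict the claim to $c_{\mathrm{CL}}>0$ and treat equality by a separate perturbation argument.
\item If $\gamma_0\geq1$, pick $\theta,\nu$ achieving $\rho\ge1$. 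By Kronecker density, choose large $\omega$ with $(-\omega r_i,-\omega s_j)$ close to $(\theta,\nu)$ mod $2\pi$, and set $\omega=c/T$ with $c$ large, so that $\phi\approx1$. The high-frequency part then has an eigenvalue on or outside the unit circle, and a Rouché argument produces characteristic roots with $\Re\lambda\geq-o(1)$, contradicting uniform stability. I expect the delicate point here to be producing roots with $\Re\lambda\ge0$ for a specific $T$, rather than only a vanishing decay rate.
\end{itemize}

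The main obstacle is the intermediate regime $|\lambda|\sim1/T$, where $\phi$ ranges over the whole unit disk. This is exactly why the relevant quantity is $\gamma_0$ and not $\gamma_1$: the plant's delay terms remain and only get scaled by $|\phi|\le1$. I would handle it with the compactness and maximum-modulus bound described above, treating $\phi$ as an independent variable in the closed unit disk.
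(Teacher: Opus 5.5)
\textbf{Comparison with the paper.} The paper does no frequency analysis itself. It quotes Corollary~5.5 of the companion paper (stability for all small $T$ $\iff$ $c_{\mathrm{CL}}<0$ and $c_F<0$) and Theorem~3.12 there ($c_F$ is the zero of the strictly decreasing function $f$). It then observes that a common rotation of the $\theta_i$ gives $f(0)=\gamma_0-1$.

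\textbf{Sufficiency.} Your self-contained argument has the right architecture. Treating $\phi=\lambda T/(1+\lambda T)$ as a free parameter in the closed unit disc is exactly why $\gamma_0$, and not $\gamma_1$, appears. However, the central estimate is unproved. The phase identity $e^{-\imath\omega(r_i+s_j)}=e^{\imath\theta_i}e^{\imath\nu_j}$ only covers $\Re\lambda=0$. For $\Re\lambda\neq0$ the moduli $e^{-\Re\lambda\, r_i}$ and $e^{-\Re\lambda\, s_j}$ vary with the indices. So $\sum_{i,j}H_{ij}e^{-\lambda(r_i+s_j)}$ is not a scalar multiple of a matrix in the family defining $\gamma_0$, and for $\ell\geq2$ there is no entrywise comparison principle for spectral radii. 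Continuity near $\Re\lambda=0$ and smallness as $\Re\lambda\to+\infty$ still leave a strip $\delta\leq\Re\lambda\leq\sigma_0$, with unbounded imaginary parts, in which you have not excluded roots.

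The missing idea is a maximum principle. The map $(z,w)\mapsto\rho\bigl(\sum_{i,j}H_{ij}z_iw_j\bigr)$ is subharmonic in each variable (Vesentini's theorem), so its maximum over a closed polydisc is attained on the torus. Write $e^{-\lambda(r_i+s_j)}=e^{\epsilon(\bar r+\bar s)}z_iw_j$ with $\bar r=\max_i r_i$ and $\bar s=\max_j s_j$; then $|z_i|,|w_j|\leq1$ whenever $\Re\lambda\geq-\epsilon$, and your bound $e^{\epsilon(\bar r+\bar s)}\gamma_0$ follows. This is precisely the monotonicity of $f$ that the paper imports. With it in place, your compactness step closes the sufficiency part.

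\textbf{Necessity: commensurate delays.} This direction has a more serious defect. Your Kronecker step presupposes that $r_1,\dots,r_k,s_1,\dots,s_\ell$ are rationally independent, which is not assumed. Otherwise $\omega\mapsto(\omega r_i,\omega s_j)$ only fills a subtorus, and the conclusion can genuinely fail for the nominal delays. Take $\ell=1$, $w_1=w_2=0$, and delays $1$ and $2$ with coefficients $0.9$ and $-0.5$. Then $\gamma_0=1.4$ and $c_{\mathrm{CL}}<0$ trivially. The characteristic equation reduces to $\lambda T\,p(e^{-\lambda})=-1$ with $p(z)=1-0.9z+0.5z^2$. A root with $\Re\lambda\geq0$ would have $|e^{-\lambda}|\leq1$, where $\Re p\geq0.29$, hence $\Re\lambda=-\Re p/(T|p|^2)<0$, a contradiction. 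Since the difference operator is stable for these commensurate delays, the closed loop is exponentially stable for every $T>0$.

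So the full-torus quantity $\gamma_0$ becomes necessary only once small delay perturbations are admitted. That is the strong-stability viewpoint of Section~\ref{secstrong}, and it is what a full-torus formula such as the one for $f$ implicitly relies on. Your argument should be run in that setting: first perturb the delays to rationally independent values, then apply Kronecker and Rouch\'e.

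\textbf{Necessity: boundary cases.} Settling $c_{\mathrm{CL}}=0$ by a separate perturbation argument cannot work in general. For a simple scalar target root $\lambda_0=\imath\omega_0\neq0$ one has $\frac{d\lambda}{dT}(0)=-\lambda_0\bigl(A(\lambda_0)+\hat w_2(\lambda_0)\bigr)/D_0'(\lambda_0)$. Here $A(\lambda)=\sum_{i,j}H_{ij}e^{-\lambda(r_i+s_j)}$, $\hat w_2$ is the Laplace transform of $w_2$ over $[-\tau,0]$, and $D_0$ is the target characteristic function. This drift has no sign constraint, since $w_2$ can be changed while $w_1-w_2$ is kept fixed. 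At the boundaries $c_{\mathrm{CL}}=0$ and $\gamma_0=1$, Hurwitz's theorem therefore only gives the loss of a uniform decay rate. Necessity must be read in that uniform (or delay-robust) sense, which the paper inherits from the cited corollary without comment.
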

\noindent\textbf{Proof.\ } According to Corollary 5.5 of  \cite{prevpaper}, the assertion of the theorem holds if and only if $c_{\mathrm{CL}}<0$  and the so-called filtered spectral abscissa $c_F$ of (\ref{difference-open}) satisfies $c_F<0$.

 Theorem 3.12 of \cite{prevpaper}, applied to (\ref{difference-open}), stipulates that $c_F$ is the unique zero of the (strictly) decreasing function $f:\ \mathbb{R}\to\mathbb{R}$, defined by
\[
f(r)\triangleq 
\max_{\scriptsize \begin{array}{c}
     \vec\theta\in [0,\ 2\pi]^{k}, \\
     \vec\nu\in [0,\ 2\pi]^{\ell}
\end{array}}
  \alpha \left( -I+\sum_{i=1}^k \sum_{j=1}^{\ell} H_{ij} e^{-r(r_i+s_i)} e^{\imath(\theta_i+ \nu_j) } \right),
\]
with $\alpha(\cdot)$ the spectral abscissa of a matrix.
Since $f$ is decreasing, $c_F<0$ is equivalent to $f(0)<0$, which on its turn is equivalent to $\gamma_0<1$.
\hfill $\Box$

\subsection{Characterization of maximum $T$ to preserve stability}

  In this section, we determine the largest stability interval  of (\ref{sys-final}) and (\ref{controlfinal}) in parameter $T$ that contains $T=0$, under the conditions of Theorem~\ref{thmmargin}. If the interval is finite, we denote by $\tilde T$ the right end point, otherwise we set $\tilde T=+\infty$.

For $\lambda\neq 0$, the characteristic equation of the closed-loop system formed by (\ref{sys-final}) and (\ref{controlfinal}), namely $\Delta(\lambda;\ T)=0$, can be rephrased in the form
\begin{multline}\label{critdel}
\frac{1}{\lambda T}\in\sigma\left(  
\left(I+\int_{-\tau}^0 (w_2(\theta)-w_1(\theta))e^{\lambda\theta}d\theta   \right)^{-1}
\left(-I+\sum_{i=1}^k\sum_{j=1}^\ell H_{ij}e^{-\lambda(r_i+s_j)}
+\int_{-\tau}^0 w_1(\theta)e^{\lambda\theta}d\theta 
 \right)
\right),
\end{multline}
with $\sigma(\cdot)$ denoting the spectrum.
Note that $\lambda=0$ cannot be a characteristic root since $\Delta(0; T)=0$ for any value of $T$ implies $\Delta(0; T)=0$ for all $T\geq 0$, which contradicts the exponential stability of the target system, implied by the condition $c_{\mathrm{CL}}<0$.
Critical values of $T$, where a stability switch occurs, are connected with the presence of characteristic roots on the imaginary axis. Substituting $\lambda=\imath\omega,\ \omega>0$,  brings us to the stu\,dy of the function $\omega\mapsto G(\omega)$, with
\begin{multline}
G(\omega)\triangleq \left(I+\int_{-\tau}^0 (w_2(\theta)-w_1(\theta))e^{\imath\omega\theta}d\theta   \right)^{-1}
\left(-I+\sum_{i=1}^k\sum_{j=1}^\ell H_{ij}e^{-\imath\omega(r_i+s_j)}
+\int_{-\tau}^0 w_1(\theta)e^{\imath\omega\theta}d\theta 
\right).    \nonumber
\end{multline}
 Since $\frac{1}{\imath\omega T}$ is always on the imaginary axis, we are interested in values of $\omega$ for which $G(\omega)$ has an imaginary axis eigenvalue. 
\begin{proposition}\label{propfinom}
There are only finitely many values of $\omega$ for which $G(\omega)$ has an eigenvalue on the imaginary axis.	
\end{proposition}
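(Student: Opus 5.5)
We split the frequency axis into a compact part $[\epsilon, \Omega]$ and a high-frequency tail $(\Omega,\infty)$, and treat them separately: analyticity handles the compact part, and the strong stability condition $\gamma_0<1$ handles the tail. Throughout, we write $A(\omega)\triangleq I+\int_{-\tau}^0 (w_2(\theta)-w_1(\theta))e^{\imath\omega\theta}d\theta$ and $B(\omega)\triangleq -I+\sum_{i,j}H_{ij}e^{-\imath\omega(r_i+s_j)}+\int_{-\tau}^0 w_1(\theta)e^{\imath\omega\theta}d\theta$, so that $G(\omega)=A(\omega)^{-1}B(\omega)$. Since $c_{\mathrm{CL}}<0$, $\det A(\omega)\neq 0$ for all real $\omega$. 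Hence $G(\omega)$ has an eigenvalue $\imath\beta$, $\beta\in\mathbb{R}$, if and only if $\det\big(B(\omega)-\imath\beta A(\omega)\big)=0$.

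\textbf{High frequencies.} By the Riemann--Lebesgue lemma (with $w_1,w_2$ piecewise $C^1$, the integrals are even $O(1/\omega)$), $A(\omega)\to I$ and $B(\omega)-(-I+D(\omega))\to 0$ as $\omega\to\infty$, where $D(\omega)=\sum_{i,j}H_{ij}e^{-\imath\omega(r_i+s_j)}$. By the definition of $\gamma_0$ in (\ref{defgamma0}), every eigenvalue of $D(\omega)$ lies in the disk of radius $\gamma_0<1$, uniformly in $\omega$. Therefore every eigenvalue of $-I+D(\omega)$ has real part at most $-1+\gamma_0<0$. Eigenvalues depend continuously on the matrix, and this dependence is uniform on bounded sets of matrices. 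It follows that there is $\Omega>0$ such that for $\omega>\Omega$ all eigenvalues of $G(\omega)$ have real part at most $(-1+\gamma_0)/2<0$. So no imaginary-axis eigenvalues occur beyond $\Omega$.

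\textbf{Low frequencies.} At $\omega=0$ we have $G(0)=A(0)^{-1}B(0)$. Here $B(0)=\Delta(0;T)$ up to the factor involving $T$; more precisely, $\Delta(0;T)=B(0)+I-A(0)\cdot 0$. As noted before the proposition, $\det\Delta(0;\cdot)\neq0$, i.e. $\det B(0)\neq 0$ (a root at $\lambda=0$ would be independent of $T$ and would contradict $c_{\mathrm{CL}}<0$). One should check this identification carefully, since $\Delta(0;T)=-I+\sum H_{ij}\cdot 0+\int(w_1-w_2)$, which equals $-A(0)$. So the relevant fact is rather that $G(0)$ is a fixed matrix with finitely many eigenvalues. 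By continuity, eigenvalues on the imaginary axis near $\omega=0$ can only accumulate at $\omega=0$ if $G(0)$ has an imaginary eigenvalue. In that case, analyticity in the next step still excludes accumulation.

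\textbf{Compact part (main step).} The function $\phi(\omega,\beta)=\det\big(B(\omega)-\imath\beta A(\omega)\big)$ extends to an entire function of $\omega$ for each $\beta$, since all delays and integration ranges are finite. It is also polynomial in $\beta$. We eliminate $\beta$ by considering the characteristic polynomial $p_\omega(s)=\det(sI-G(\omega))$, whose coefficients are real-analytic in $\omega\in\mathbb{R}$. $G(\omega)$ has an imaginary eigenvalue iff $p_\omega(\imath\beta)=0$ for some real $\beta$. Splitting into real and imaginary parts, $p_\omega(\imath\beta)=P_1(\omega,\beta)+\imath P_2(\omega,\beta)$, where $P_1,P_2$ are polynomial in $\beta$ and analytic in $\omega$. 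A common real root in $\beta$ exists iff the resultant $R(\omega)=\mathrm{Res}_\beta(P_1,P_2)$ vanishes, after handling degenerate leading coefficients. $R$ is real-analytic on $\mathbb{R}$, so its zeros on the compact interval $[0,\Omega]$ are finite unless $R\equiv 0$.

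The main obstacle is excluding $R\equiv0$, i.e. excluding an imaginary eigenvalue for every $\omega$. Here the high-frequency step helps: for $\omega>\Omega$ there is no imaginary eigenvalue. Hence $R\not\equiv0$ on $(\Omega,\infty)$, provided we make the resultant argument exact there, for instance by using instead the analytic function $\omega\mapsto\prod_k \Re\,\mu_k(\omega)$ symmetrized, i.e. the Hurwitz-type determinant of $p_\omega$ whose vanishing detects imaginary-axis roots. Analytic continuation then gives $R\not\equiv0$ on all of $\mathbb{R}$. Combining the three regions yields finitely many such $\omega>0$.
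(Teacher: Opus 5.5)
Your high-frequency step matches the paper's proof, which consists only of that step. The paper shows that $G(\omega)$ approaches $G_\infty(\omega)=-I+\sum_{i,j}H_{ij}e^{-\imath\omega(r_i+s_j)}$. It then uses $f(0)<0$, equivalently $\gamma_0<1$, which you use directly through $\rho(D(\omega))\le\gamma_0$, to keep $\bigcup_{\omega}\sigma(G_\infty(\omega))$ uniformly away from the imaginary axis. The bounded frequency range is left implicit. Your real-analyticity argument on $[0,\Omega]$ is therefore a genuine addition. It is what turns ``the critical frequencies are bounded'' into ``there are finitely many''.

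That step needs one repair. $R(\omega)=\mathrm{Res}_\beta(P_1,P_2)$ vanishes iff $P_1,P_2$ have a common \emph{complex} root. This happens iff $\sigma(G(\omega))$ contains a pair $\mu,-\bar\mu$ symmetric about the imaginary axis. So ``common real root iff $R=0$'' is false. Likewise, the mere absence of imaginary-axis eigenvalues for $\omega>\Omega$ does not give $R(\omega)\neq0$ there. The Hurwitz-type determinant you suggest has the same feature: it detects symmetric pairs, not only imaginary roots.

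What closes the argument is the stronger output of your tail step: $\Re\mu\le-(1-\gamma_0)/2$ for all $\mu\in\sigma(G(\omega))$ and all $\omega>\Omega$, which rules out such pairs. A clean way to use it is
$\chi(\omega)=\det\bigl(G(\omega)\otimes I+I\otimes\overline{G(\omega)}\bigr)=\prod_{k,l}\bigl(\mu_k(\omega)+\overline{\mu_l(\omega)}\bigr)$.
This function has three properties:
\begin{itemize}
\item it is real-analytic on $\mathbb{R}$, since $\det A(\omega)\neq0$ there;
\item it vanishes at every $\omega$ where $G(\omega)$ has an imaginary eigenvalue, and this one-sided implication is all you need;
\item it is nonzero for $\omega>\Omega$.
\end{itemize}
The identity theorem then gives finitely many zeros in $[0,\Omega]$.

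Finally, drop your low-frequency paragraph. Its identification of $B(0)$ with $\Delta(0;T)$ is wrong; in fact $\Delta(0;T)=-A(0)$, as you yourself notice. The analytic argument on $[0,\Omega]$ already covers $\omega=0$.
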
	
\noindent\textbf{Proof.}\
For large $\omega$, the function $\omega \mapsto G(\omega)$ converges to the function $\omega\mapsto G_{\infty}(\omega)$, with
\[
G_{\infty}(\omega)\triangleq -I+\sum_{i=1}^k\sum_{j=1}^\ell H_{ij}e^{-\imath\omega(r_i+s_j)},
\]
in the sense that for all $\epsilon>0$ there is an $\underline{\omega}>0$ such that $\|G(\omega)-G_{\infty}(\omega) \|_2<\epsilon$ for all $\omega\geq \underline{\omega}$.
As argued in the proof of Theorem~\ref{thmmargin}, it holds that  $f(0)<0$, which implies that the set $\bigcup_{\omega\geq 0}  \sigma(G_{\infty}(\omega))$ is confined to the open left half plane and bounded away from the imaginary axis.\hfill $\Box$

\medskip

The form of (\ref{critdel}) and Proposition~\ref{propfinom} suggest Algorithm~\ref{algsweep} for computing $\tilde T$.  It will be illustrated in Section~\ref{secfilter}-C.
\begin{algorithm}
{\small 
\begin{algorithmic}[1]
\State Determine the complete set $\{(\omega_1,\gamma_1),\ldots,(\omega_l,\gamma_l) \}\subseteq\mathbb{R}_{\geq 0}\times \mathbb{R}$ such that $G(\imath\omega_{i})$ has eigenvalue $\imath\gamma_i,\ i=1,\ldots,\ell$, by frequency sweeping
\If{the set is empty}
 \State $\tilde{T}=+\infty$, 
 \Else 
 \State $\tilde{T}=\min_{i\in\{1,\ldots,l\}} T_i$,\ \
	with  $T_i=\frac{1}{\omega_i|\gamma_i|},\ i=1,\ldots,l$
    \EndIf
    \end{algorithmic}}
    \caption{\label{algsweep} Computation of $\tilde{T}$}
\end{algorithm}
%
%Its application to the example in the previous section leads to $\ell=3$  and $T_1\approx  0.3056,\ T_2\approx0.3806$ and $T_3\approx0.1615$, from which we conclude $\tilde{T}=T_3$.  These values are indicated in Figure~\ref{figstabdist}.

% \paragraph{Analysis of a special case}
% Let us assume that $m=n=1$ and that $w_1$ is a constant function. We also consider that $w_2=w_1$.
% We need to analyze the following equation:
% \begin{align}
%     1+T\imath \omega-T(H_1\imath \omega\mathrm{e}^{\imath \tau  \omega}-w_1\mathrm{e}^{\imath \tau  \omega}+w_1)=0.
% \end{align}
% This leads to 
% \begin{align}
%     &1-TH_1\omega\sin(\tau \omega)+Tw_1\cos(\tau \omega)-Tw_1=0, \label{eq_1_T}\\
%     &T\omega-TH_1\omega \cos(\tau \omega)-Tw_1\sin (\tau \omega)=0.
% \end{align}
% The second equation leads to 
% \begin{align}
%     \omega-H_1\omega \cos(\tau \omega)-w_1\sin (\tau \omega)=0 \label{eq_omega}
%     \end{align}
%     Equation~\eqref{eq_omega} has a finite number of solution, that we denote $\omega_c$. We can then  $T_0^\star$ as 
%     \begin{align}
%         T_0^\star=\min_{\omega_c}(\frac{1}{H_1\omega_c\sin(\tau\omega_c)+w_1-w_1\cos(\tau \omega_c)},~\text{s.t. $H_1\omega_c\sin(\tau\omega_c)+w_1-w_1\cos(\tau \omega_c)>0$}).
%     \end{align}
%     If $T_0^\star$ is not defined, then  $T^\star=+\infty$, otherwise $T^\star=T_0^\star tsc$.
% % \begin{align*}
% %     (1-Tw_1)^2 +T\omega^2= T^2H_1^2\omega^2+T^2w_1^2-4w_1H_1\omega T^2
% % \end{align*}

\section{Robustness against parametric perturbations}
\label{secrobust}
The aim of the section to study the choice of $T$ in relation to robustness against parametric uncertainty, assuming condition (\ref{relgamma}) is satisfied and target system (\ref{sys-target}) is exponentially stable.  In the extreme case of $T=0$, the stability margin against delay perturbations is zero. This is not in contrast with the property that the characteristic function of the closed-loop system converges on compact sets to the one of the exponentially stable target system (\ref{sys-target}) as $T\rightarrow 0+$, because the destabilizing perturbations may induce characteristic roots crossings of the imaginary axis with arbitrarily large imaginary part. While with the inclusion of a filter, the fragility problem  is theoretically resolved (Theorem~\ref{thmmargin}), it is expected that the smaller the value of $T$ is (the higher the cut-off frequency), the smaller the delay margin is, which we will confirm by theoretical results and  a numerical case-study. On the other hand, large values of $T$ may induce the low-frequency dynamics to be significantly different from those of the desired closed-loop system. This may even be another source of instability, as we shall illustrate with the application of Algorithm~\ref{algsweep}, leading to an inherent design trade-off in the choice of~$T$.

 \subsection{Description of the model mismatch}
 %\textcolor{red}{Proposed changes in references to Assumption 1 in the paper are all marked in red, to double-check...}

We start by describing the considered mismatch in terms of a {ball-like set} in the space of PDE parameters.
\begin{definition}%[$\mathcal{B}_{\mathcal{P}}^{\delta}$]
\label{def:ball}
    Given parameters $\mathcal{P} \triangleq(\Lambda,\Sigma,Q,R)$ for system \eqref{eq:hyperbolic_couple} satisfying \eqref{eq:original_order}, and a positive scalar $\delta>0$, we define the set of \emph{valid $\delta$-perturbations of $\mathcal{P}$}, as the set $\mathcal{B}_{\mathcal{P}}^{\delta}$ of all parameter tuples $(\bar{\Lambda},\bar{\Sigma},\bar{Q},\bar{R})$, such that the following properties hold:
    \begin{itemize} 
        \item[(i) ]$\bar{\Lambda}=\text{diag}\big(\bar{\lambda}_1,\ldots, \bar{\lambda}_{k},-\bar{\mu}_1,\ldots, -\bar{\mu}_\ell\big)$, with
        \begin{equation}\label{order}
            -\bar{\mu}_\ell<\cdots<-\bar{\mu}_1<0<\bar{\lambda}_1<\cdots<\bar{\lambda}_{k}
        \end{equation}
        \item[(ii) ] The coefficients satisfy \begin{align} \label{eq:R_delta}
        |\bar{R}_{ij}-R_{ij}|&\leq \delta|R_{ij}|, \quad \text{$1 \leq i\leq \ell$, $1 \leq j\leq k,$}\\
        |\bar{Q}_{ij}-Q_{ij}|&\leq \delta|Q_{ij}|, \quad \text{$1 \leq i\leq k$, $1 \leq j\leq \ell,$} \\
        |\bar{\Sigma}_{ij}-\Sigma_{ij}|&\leq \delta|\Sigma_{ij}|, \quad \text{$1 \leq i,j\leq k+\ell$,} \\ \label{eq:Lambda_delta}
        |\bar{\Lambda}_{ii}-\Lambda_{ii}|&\leq \delta|\Lambda_{ii}|, \quad \text{$1\leq i \leq k+\ell$}. 
        \end{align}
    \end{itemize}
\end{definition}

\begin{proposition}\label{prop:closed_ball}
    %Given parameters $\mathcal{P}=(\Lambda,\Sigma,Q,R)$ for system \eqref{eq:hyperbolic_couple}, there 
    There exists a maximal real constant $\delta^{\star}(\mathcal{P})>0$, such that, for all $\delta \in(0,\delta^{\star})$, the set $\mathcal{B}_{\mathcal{P}}^{\delta}$ is closed.
\end{proposition}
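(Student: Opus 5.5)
Throughout, closedness is taken in the finite-dimensional space of tuples $(\bar\Lambda,\bar\Sigma,\bar Q,\bar R)$ with its Euclidean topology. The plan is to write $\mathcal{B}_{\mathcal{P}}^{\delta}$ as the intersection of two sets. The first set $\mathcal{C}^\delta$ contains the tuples satisfying the box constraints (ii) together with $\bar\Lambda$ being diagonal of the prescribed sign pattern. The second set $\mathcal{O}$ contains the tuples satisfying the strict ordering (\ref{order}). The set $\mathcal{C}^\delta$ is a product of closed intervals $[a-\delta|a|,\ a+\delta|a|]$, one for each entry, intersected with the closed linear subspace of diagonal $\bar\Lambda$. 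It is therefore closed and compact for every $\delta$. Only the strict inequalities in (i) can destroy closedness. The goal is thus to show that, for $\delta$ small, every point of $\mathcal{C}^\delta$ automatically satisfies (\ref{order}). Then $\mathcal{B}_{\mathcal{P}}^{\delta}=\mathcal{C}^\delta$ is closed.

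The key step is an elementary interval-separation argument. By (\ref{eq:Lambda_delta}), $\bar\lambda_i\in[(1-\delta)\lambda_i,(1+\delta)\lambda_i]$ and $\bar\mu_j\in[(1-\delta)\mu_j,(1+\delta)\mu_j]$.

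First, positivity holds whenever $\delta<1$. This gives $\bar\lambda_1>0$ and $\bar\mu_1>0$, and these sign conditions combined with the diagonal sign pattern make the closed box consistent with $-\bar\mu_1<0<\bar\lambda_1$.

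Second, the strict ordering $\bar\lambda_i<\bar\lambda_{i+1}$ is guaranteed if the intervals are disjoint, i.e. $(1+\delta)\lambda_i<(1-\delta)\lambda_{i+1}$. This is equivalent to
\[
\delta<\frac{\lambda_{i+1}-\lambda_i}{\lambda_{i+1}+\lambda_i}.
\]
The same bound with $\mu_j$ in place of $\lambda_i$ handles the leftward velocities.

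Set
\[
\delta^\star \triangleq \min\Big\{1,\ \min_{i}\frac{\lambda_{i+1}-\lambda_i}{\lambda_{i+1}+\lambda_i},\ \min_j\frac{\mu_{j+1}-\mu_j}{\mu_{j+1}+\mu_j}\Big\}.
\]
It is positive by the strict ordering (\ref{eq:original_order}). For $\delta\in(0,\delta^\star)$, every element of $\mathcal{C}^\delta$ satisfies (\ref{order}), so $\mathcal{B}^\delta_{\mathcal{P}}=\mathcal{C}^\delta$ is closed.

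Finally, the word \emph{maximal} must be addressed. I would define $\delta^\star$ as the supremum of the set of $\delta$ for which closedness holds on all of $(0,\delta)$; by the above this set is nonempty. To show that this supremum is exactly the explicit constant above, I would argue the converse. If $\delta$ exceeds one of the separation thresholds, then adjacent intervals overlap or touch. Points with $\bar\lambda_i<\bar\lambda_{i+1}$ can then converge to a point with $\bar\lambda_i=\bar\lambda_{i+1}$, whose other coordinates stay admissible, so that limit lies outside the set. The same happens if $\delta\geq 1$, since $\bar\lambda_1\to 0$ becomes admissible in the box. Either way, closedness fails.

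The main obstacle is this maximality and boundary bookkeeping. At exact equality $\delta=\delta^\star$ the intervals touch at a single point. Whether the set is closed there depends on whether that shared point is attainable, which is why the statement uses the open interval $(0,\delta^\star)$. I would also treat degenerate entries $R_{ij}=0$, $Q_{ij}=0$ or $\Sigma_{ij}=0$ separately. For these the box is the single point $\{0\}$, which is still closed, so they cause no difficulty.
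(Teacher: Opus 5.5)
Your proof is correct and takes the same route as the paper. For small $\delta$ the box constraints (\ref{eq:Lambda_delta}) already force the strict ordering (\ref{order}), so $\mathcal{B}_{\mathcal{P}}^{\delta}$ reduces to the closed set defined by (\ref{eq:R_delta})--(\ref{eq:Lambda_delta}). You go further than the paper's one-line sketch by giving $\delta^\star$ explicitly via the relative gaps and by checking maximality. One small sharpening: at $\delta=\delta^\star$ the shared endpoint always belongs to the closed box, so closedness definitely fails there, rather than depending on whether that point is attainable.
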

\noindent\textbf{Proof.}\
    The proof of this result is straightforward, since \eqref{eq:Lambda_delta} together with \eqref{eq:original_order} imply \eqref{order} for sufficiently small values of $\delta$ and \eqref{eq:R_delta}-\eqref{eq:Lambda_delta} define a closed set. Furthermore, the (finite) maximal constant $\delta^{\star}(\mathcal{P})$ is directly related to the minimal gap between consecutive values in \eqref{eq:original_order}. \hfill $\Box$

    \smallskip

%In the remainder of the article, we will assume, when considering parametric uncertainties in the coefficients of \eqref{eq:hyperbolic_couple} around a nominal set of coefficients $\mathcal{P}$.

%In what follows we assess the impact of model mismatch, expressed by perturbed parameters $\mathcal{\bar P}\triangleq(\bar\Lambda,\bar\Sigma,\bar Q,\bar R)$ on exponential staiblity, where we assume for  
%
Inherently to the adopted delay-based approach, we need to translate perturbed PDE parameters $\mathcal{\bar P}\triangleq(\bar\Lambda,\bar\Sigma,\bar Q,\bar R)$, assuming relation (\ref{order}) is respected, into the matrices, discrete delays and delay kernel $w_1$ in IDE (\ref{sys-final}), which are perturbed to $\bar{r}_i$, $\bar{s}_j$, $\bar{\tau}$ and $\bar{w}_1$, respectively.  Note that the function  $\bar{w}_1$ may not be defined over the same interval as $w_1$ due to the mismatch on the velocity matrix. Letting 
  $\mathds{1}_{\Omega}$ denote the indicator function for the set $\Omega$, we resolve this issue by introducing the extended kernels $w_1^{e}$ and $\bar{w}_1^{e}$, 
defined for all $
   \theta\in [-\max(\tau,\bar{\tau}),0]$
by
\begin{equation}
    w_1^{e}(\theta)=w_1(\theta)\,\mathds{1}_{[-\tau,0]}(\theta)
\end{equation}
and
\begin{equation}
    \bar{w}_1^{e}(\theta)=\bar{w}_1(\theta)\,\mathds{1}_{[-\bar{\tau},0]}(\theta),
\end{equation}
so that both kernels are consistently defined on the same domain.  The following lemma, whose proof can be found in Appendix~\ref{secapB}, relates the size of the perturbations at PDE level and  at IDE level.

\begin{lemma} \label{lemma:convergence_w} {Given parameters $\mathcal{P}$ for system \eqref{eq:hyperbolic_couple}, l}et $\hat\delta\in(0,\delta^\star)$, with $\delta^\star$ determined by Proposition~\ref{prop:closed_ball}.  There exists a constant $C_w>0$ such that for any $\delta\in[0,\hat \delta]$  and $\mathcal{\bar P}\in\mathcal{B}_{\mathcal{P}}^\delta$, it holds that
\begin{multline}\label{eq:ineg_w}
|(\bar{w}_1^{e})_{ij}(\theta)-(w_1^{e})_{ij}(\theta)|_2 \leq  C_w \big(\delta+\mathds{1}_{\bar{D}^w}(\theta)\big),\
\theta \in [-\max(\tau,\bar{\tau}),0],\  1\leq i,j \leq \ell,
\end{multline}
%and a function $(0,\hat\delta)\ni\delta\mapsto \bar D^w(\delta)\in[0,\tau]$  satisfying $\nu(\bar{D}^w(\delta))\leq C_w\delta$, with $\nu$ the Lebesgue measure
where  $\bar D^w\in[0,\tau]$  satisfies $\nu(\bar{D}^w)\leq C_w\delta$ and $\nu$ is the Lebesgue measure.
\end{lemma}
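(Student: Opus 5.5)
The estimate follows by tracking how $w_1$ is built from the PDE parameters in Appendix~\ref{apA}. That construction goes in three stages. First, the backstepping/Volterra kernels $K,L$ solve a hyperbolic system of kernel equations on a triangular domain. Second, these kernels are composed along characteristics with boundary data $Q$, $R$. Third, the resulting distributed terms are collected on the intervals $[-(r_i+s_j),0]$. The kernel $w_1$ is therefore piecewise $C^1$. It is a finite sum of expressions of the form $\Phi_p(\theta;\mathcal P)\,\mathds{1}_{[-a_p(\mathcal P),-b_p(\mathcal P)]}(\theta)$. Each breakpoint $a_p,b_p$ is a nonnegative integer combination of the transport times $r_i=1/\lambda_i$ and $s_j=1/\mu_j$. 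Each $\Phi_p$ is a $C^1$ function of $\theta$ built from kernel values, products of $Q,R,\Sigma$ entries, and the velocities.

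The plan is to split the difference $\bar w_1^e-w_1^e$ into a part coming from changes of the smooth pieces and a part coming from moved breakpoints. For every $\theta$,
\[
(\bar w_1^e-w_1^e)(\theta)=\sum_p\big(\bar\Phi_p-\Phi_p\big)(\theta)\,\mathds{1}_{I_p\cap\bar I_p}(\theta)+\sum_p\big(\bar\Phi_p\mathds{1}_{\bar I_p\setminus I_p}-\Phi_p\mathds{1}_{I_p\setminus\bar I_p}\big)(\theta),
\]
where $I_p$ and $\bar I_p$ are the nominal and perturbed supporting intervals. Since the number of pieces $p$ is fixed by $k$ and $\ell$, it suffices to bound each piece.

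\textbf{Breakpoint terms.} On the compact set $\mathcal B^{\hat\delta}_{\mathcal P}$ (closed by Proposition~\ref{prop:closed_ball}, and bounded), the velocities stay bounded away from $0$ because $\hat\delta<\delta^\star\le 1$. Hence $|\bar r_i-r_i|\le \delta r_i/(1-\hat\delta)$, and similarly for $s_j$. Every breakpoint therefore moves by at most $C\delta$. The symmetric differences $I_p\triangle\bar I_p$ are unions of at most two intervals of length $O(\delta)$, and so is the extra region $[-\bar\tau,-\tau]$ when $\bar\tau>\tau$. Define $\bar D^w$ as the union of all these sets; its measure is at most $C_w\delta$. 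On $\bar D^w$ it is enough to bound $|\bar\Phi_p|$ and $|\Phi_p|$ uniformly by a constant. This gives the term $C_w\mathds 1_{\bar D^w}(\theta)$.

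\textbf{Smooth terms.} On $I_p\cap\bar I_p$ we need $|\bar\Phi_p(\theta)-\Phi_p(\theta)|\le C\delta$ uniformly in $\theta$. The dependence on $Q$, $R$ and $\Sigma$ is polynomial, and on $\Lambda$ it is rational with nonvanishing denominators, both applied to the kernel values. So the key ingredient is Lipschitz dependence of the kernels $K,L$ in sup norm on $(\Lambda,\Sigma,Q,R)$ over the compact parameter set.

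I would prove this by first rescaling the kernel domain so that it no longer depends on $\Lambda$. The kernel equations then become a fixed-domain system of integral equations of Volterra type, whose coefficients depend Lipschitz-continuously on the parameters. The standard successive-approximation bound $\|K\|_\infty\le Me^{M}$, applied to the difference equation satisfied by $\bar K-K$, then yields $\|\bar K-K\|_\infty\le C\delta$. The constant is uniform because all coefficient bounds are uniform on $\mathcal B^{\hat\delta}_{\mathcal P}$.

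\textbf{Main obstacle.} The hardest step is this last one. The kernel equations are themselves hyperbolic with boundary conditions on characteristic-dependent lines, and the kernels are only piecewise smooth: they have discontinuity lines whose positions also depend on $\Lambda$. After rescaling, some of these interior discontinuities may still move. If so, I would absorb their contribution into $\bar D^w$ in the same way, since each moves by $O(\delta)$ and contributes a bounded jump on a set of measure $O(\delta)$. Combining the two parts then gives \eqref{eq:ineg_w} with a single constant $C_w$ taken as the maximum of the constants from all pieces.
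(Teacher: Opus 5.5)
Your outer decomposition is sound and is in fact more explicit than the paper's last step. You split $\bar w_1^e-w_1^e$ into differences of smooth pieces plus contributions of moved breakpoints, and you take $\bar D^w$ as the union of $O(\delta)$-neighbourhoods of the breakpoints together with $[-\bar\tau,-\tau]$; that is the right shape. The gap is in the step you call the key ingredient. The bound $\|\bar K-K\|_\infty\le C\delta$ is false in general, and rescaling cannot repair it.

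The kernel domain $\mathcal T$ does not depend on $\Lambda$ at all. What moves are discontinuity lines \emph{inside} $\mathcal T$. In the characteristic form of the kernel equations, take $K^{--}_{ij}$ with $i>j$. Its source term switches, through $\gamma_{ij}$, between two generically different constants across the line $\mu_i\xi=\mu_jy$. So $\bar K^{--}_{ij}-K^{--}_{ij}$ is generically of order one on the wedge between $\mu_i\xi=\mu_jy$ and $\bar\mu_i\xi=\bar\mu_jy$ as soon as $\ell\ge 2$. For the same reason, the integral operator of the kernel equations is not Lipschitz in the parameters in sup norm. It contains the factor $1-\gamma_{ij}$, and it evaluates a discontinuous $K$ at points that move by $O(\delta)$. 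A change of integration variable can freeze the integration intervals, but not these evaluation points. The interior jumps are therefore structural, not a side effect to patch at the end. They also appear to produce breakpoints of $w_1$ that are not integer combinations of the $r_i,s_j$, e.g.\ via $\xi=\mu_j/\mu_i$ on $y=1$ through $N^-$. That part is harmless, since these breakpoints still move Lipschitz-continuously.

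Your remedy, ``absorb them into $\bar D^w$'', gives the right form of the estimate but not the mechanism that proves it. That mechanism is the core of the paper's Lemma~\ref{lemma:convergence_K}. A sup-norm successive-approximation bound for $\tilde{\mathbf K}=\tilde\varphi+\bar\Phi(\tilde{\mathbf K})$ only returns $\|\tilde{\mathbf K}\|_\infty\le C\|\tilde\varphi\|_\infty=O(1)$. Three steps are needed.

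First, split the source $\tilde\varphi=(\bar\varphi-\varphi)+(\bar\Phi-\Phi)(\mathbf K)$ into two parts. One is an $O(\delta)$ part, coming from the Lipschitz dependence of the constants and from integrating the piecewise $C^1$ \emph{nominal} kernel along slightly perturbed segments. The other is a bounded part supported on a set $\bar D_{ij}$ of measure $O(\delta)$, coming from $\bar\gamma_{ij}-\gamma_{ij}$.

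Second, one application of $\bar\Phi$ integrates the indicator part along characteristic segments that cross the moving lines transversally. In the $K^{--}$ equations the only parallel family carries the weight $\Sigma^{--}_{jj}=0$. Hence $\bar\Phi(\tilde\varphi)$ is already $O(\delta)$ in sup norm.

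Third, the higher terms $\bar\Phi^q(\tilde\varphi)$, $q\ge 2$, of the Neumann series then obey the factorial bounds of Lemma~\ref{lemma:rec_bound} and sum to $O(\delta)$. The indicator therefore survives only in the zeroth term, which gives $|\bar K_{ij}-K_{ij}|\le C_K(\delta+\mathds{1}_{\bar D_{ij}})$.

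This must then be repeated for $L$ and propagated, as in the paper, through the traces defining $G^{-+},G^{--},N^{\pm}$, through $F$, and into $w_1$. At that stage it is the lengths of the traces of the exceptional sets on $y=1$ and $\xi=0$ that must be $O(\delta)$, not just their areas; this again rests on transversality. Without these steps, the smooth-piece estimate on which your decomposition rests is unproved.
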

As can be observed, the right-hand side of~\eqref{eq:ineg_w} consists of two terms. More precisely, the difference between the kernels $w_1^{e}$ and $\bar{w}_1^{e}$ is uniformly small (bounded by $C_w \delta$), except on a subset whose Lebesgue measure vanishes as $\delta \to 0$. This property follows from the piecewise continuity of the kernel $w_1^{e}$, and is made explicit in the proof of the lemma.
Considering now also the induced change of parameters $H_{ij}$ to $\bar{H}_{ij}$,\ $r_i$ to $\bar{r}_i$ and $s_j$ to $\bar{s}_j$, $\ 1\leq i\leq k,\ 1\leq j\leq\ell$, we can state the following result.
\begin{proposition} \label{lemeps}    Let $\hat\delta\in(0,\delta^\star)$ {with $\delta^\star$ defined by Proposition~\ref{prop:closed_ball}}.
There exists  a number $\kappa>0$ such that
for any $\delta \in[0,\hat\delta]$  and $\mathcal{\bar P}\in\mathcal{B}_{\mathcal{P}}^\delta$, we have
\[
\begin{array}{l}
|\bar{r}_i-r_i|\leq \kappa\delta,\  
\
|\bar{s}_j-s_j|\leq \kappa \delta,
\\
\| \bar{H}_{ij}-H_{ij}\|_2\leq \kappa \delta,\ \hspace*{2cm}
\\
\left\| \int_{-\tau}^0 w_1(\theta)e^{\lambda\theta}d\theta-\int_{-\bar{\tau}}^0 \bar{w}_1(\theta)e^{\lambda\theta} d\theta\right\|_{\mathcal{H}_{\infty}}<\kappa\delta,  
\end{array}
\]
for   $1\leq i\leq k,\ 1\leq j\leq\ell$.
\end{proposition}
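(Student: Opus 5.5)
The four bounds are of different nature, and I will treat them in turn, always working on the compact set $\mathcal{B}_{\mathcal{P}}^{\hat\delta}$ (closed by Proposition~\ref{prop:closed_ball} and clearly bounded), so that all constants can be taken uniform.

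\emph{Delays.} Since $\bar r_i=1/\bar\lambda_i$ and $\bar s_j=1/\bar\mu_j$, the bound \eqref{eq:Lambda_delta} gives $\bar\lambda_i\geq(1-\hat\delta)\lambda_i>0$. The mean value theorem then yields
\[
|\bar r_i-r_i|=\frac{|\bar\lambda_i-\lambda_i|}{\bar\lambda_i\lambda_i}\leq \frac{\delta}{(1-\hat\delta)\lambda_i},
\]
and similarly for $\bar s_j$. This gives the first two inequalities with a constant depending only on $\mathcal{P}$ and $\hat\delta$.

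\emph{Matrices $H_{ij}$.} Following the construction of the comparison model (Appendix~\ref{apA}, \cite{Auriol2019a}), the matrices $H_{ij}$ are products of entries of $R$ and $Q$ with boundary values of the backstepping kernels, which are exponentials of integrals of $\Sigma$ along characteristics divided by velocity differences. I would therefore argue that the map $(\Lambda,\Sigma,Q,R)\mapsto H_{ij}$ is continuously differentiable on a neighbourhood of the compact set $\mathcal{B}_{\mathcal{P}}^{\hat\delta}$. The ordering \eqref{order} keeps all velocity differences bounded away from zero there. A uniform bound on the derivative then gives Lipschitz continuity, and since $\|\bar{\mathcal P}-\mathcal P\|\leq \delta\|\mathcal P\|$ entrywise, we obtain $\|\bar H_{ij}-H_{ij}\|_2\leq\kappa\delta$.

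\emph{Integral term.} For $\lambda$ in the closed right half plane, $|e^{\lambda\theta}|\leq1$ for $\theta\leq0$. Writing both integrals over $[-\max(\tau,\bar\tau),0]$ with the extended kernels, the difference in $\mathcal H_\infty$ norm is bounded by
\[
\sup_{\Re\lambda\geq0}\left\|\int_{-\max(\tau,\bar\tau)}^0 (\bar w_1^e-w_1^e)(\theta)e^{\lambda\theta}d\theta\right\|_2\leq \ell\int_{-\max(\tau,\bar\tau)}^0 \max_{i,j}|(\bar w^e_1-w^e_1)_{ij}(\theta)|\,d\theta,
\]
where the factor $\ell$ comes from bounding the $2$-norm by the Frobenius norm, for instance. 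Lemma~\ref{lemma:convergence_w} bounds the integrand by $C_w(\delta+\mathds 1_{\bar D^w})$. Integrating gives at most $\ell C_w(\max(\tau,\bar\tau)\delta+C_w\delta)$. Since $\bar\tau\leq\tau+2\kappa\delta$ by the delay step, this is $O(\delta)$. The strict inequality is handled by enlarging $\kappa$, at least for $\delta>0$; for $\delta=0$ the left side is $0$ and the strict inequality has to be read with $\kappa\delta$ replaced by $\kappa\delta$ plus an arbitrarily small quantity, or as non-strict.

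\emph{Main obstacle.} The hardest step is the Lipschitz dependence of $H_{ij}$ (and implicitly $w_1$, already absorbed in Lemma~\ref{lemma:convergence_w}) on the PDE parameters. It requires checking that the kernel equations defining the IDE depend smoothly on $(\Lambda,\Sigma)$ uniformly on the ball. I would first try to obtain this from the explicit formulas in Appendix~\ref{apA} and from successive-approximation bounds on the kernel PDEs, reusing the estimates from the proof of Lemma~\ref{lemma:convergence_w}.
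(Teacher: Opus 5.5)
\paragraph{Delays and integral term.} These two steps are fine and follow the paper's short argument. The delay bounds come straight from $r_i=1/\lambda_i$ and $s_j=1/\mu_j$. The $\mathcal{H}_\infty$ bound is what the paper means by ``a direct consequence of \eqref{eq:ineg_w}'': you integrate $C_w(\delta+\mathds{1}_{\bar D^w})$ from Lemma~\ref{lemma:convergence_w} over an interval of length $\max(\tau,\bar\tau)$. You are also right that the strict inequality cannot hold at $\delta=0$.

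\paragraph{The step for $H_{ij}$.} This is where the gap is. You leave it open as the ``main obstacle'', and it rests on a wrong description of these matrices. The $H_{ij}$ contain no boundary values of backstepping kernels and no $\Sigma$- or $\Lambda$-dependent factors. The transformations of Appendix~\ref{apA} move all in-domain couplings into the source terms and the integral term of \eqref{eq:hyperbolic_target_2}, so they end up in $w_1$. The discrete-delay terms come only from composing the two boundary reflections $w_+(t,0)=Qw_-(t,0)$ and $w_-(t,1)=Rw_+(t,1)+\dots$ along characteristics. Recall that $\Sigma_{ii}=0$, which (ii) of Definition~\ref{def:ball} preserves. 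Hence $H_{ij}$ is bilinear in the entries of $R$ and $Q$, with entries of the form $R_{pi}Q_{ij}$; in Section~\ref{seccase}, $H_{11}=RQ$. The bound $\|\bar H_{ij}-H_{ij}\|_2\le\kappa\delta$ then follows at once from \eqref{eq:R_delta} and its analogue for $Q$, e.g.\ $|\bar R_{pi}\bar Q_{ij}-R_{pi}Q_{ij}|\le(2\delta+\delta^2)|R_{pi}||Q_{ij}|$. This is exactly what the paper uses.

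\paragraph{Why your proposed fix would not work.} The backstepping kernels are only piecewise continuous, and their discontinuity lines (e.g.\ $\mu_i\xi-\mu_j y=0$) move with $\Lambda$. The successive-approximation estimates you plan to reuse therefore only give bounds of the form $C(\delta+\mathds{1}_{\bar D})$, as in Lemma~\ref{lemma:convergence_K}. They do not give pointwise Lipschitz bounds. If $H_{ij}$ really depended on point values of such kernels, the claimed $C^1$ dependence on the parameters would be unjustified and could fail.
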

\noindent\textbf{Proof.\ } 
The two first inequalities can be directly obtained from the definition of the parameters $r_i$, and $s_j$. Similarly, as the matrices $H_{ij}$ are defined through the matrices $R$ and $Q$ (see~\cite{Auriol2019a}), we immediately get the third inequality. The last inequality is a direct consequence of~\eqref{eq:ineg_w}.%\textcolor{blue}{TBC}
\hfill $\Box$

%More precisely, we consider model mismatch in the form of constant perturbations on the parameters of the PDE model and assume the control law is based on parameters
%\[
%\begin{array}{l}
%\lambda_i +\delta\lambda_i,\ 1\leq i\leq k, \\
%\mu_j+\delta\mu_j,\ j\leq i\leq \ell, \\
%\end{array}
%\]
%and matrices
%\[
%\Sigma+\delta \Sigma,\ Q+\delta Q, R+\delta R.\ 
%\]

\subsection{Qualitative description of the stability region}

We analyze the effects of the model mismatch in the controller design by characterizing stability of the closed-loop system in the $(T,\delta)$-parameter space. Letting the \emph{shape} of the desired kernel of the distributed delay in the target closed-loop system be described by the piecewise continuously differentiable function $w_{\mathrm{des}}:\ [-1,\ 0]\to \mathbb{R}^{\ell\times \ell}$,  the filtered control law based on \emph{assumed }model parameters $\mathcal{\bar P}\triangleq(\bar\Lambda,\bar\Sigma,\bar Q,\bar R)$ takes the form
\begin{multline}\label{controlfinalper}
T\dot u(t)+ u(t)=-\sum_{i=1}^k \sum_{j=1}^{\ell}  \bar{H}_{ij}\ x\left(t-(\bar{r}_i+\bar{s}_j)\right)
-\int_{-\bar{\tau}}^0  \left( \bar{w}_1(\theta) - \frac{1}{\bar{\tau}}w_{\mathrm{des}}\left(\frac{\theta}{\bar{\tau}}\right)  \right)x(t+\theta)d\theta.
\end{multline}
The following theorem addresses stability robustness of the closed-loop system.
\begin{theorem}\label{theo:cone}
Assume that the closed-loop system  (\ref{sys-final}) and (\ref{controlfinal}), with $w_2(\theta)=w_1(\theta)-\frac{1}{\tau} w_{\mathrm{des}}\left(\frac{\theta}{\tau}\right),\ \theta\in[-\tau,\ 0]$,  is strongly  stable.	Then there exist a constant $\hat T>0$  and  a constant $c>0$  such that  for all $(T,\delta)$ satisfying
	\begin{equation}\label{spie}
	T\in(0,\hat T) ,\ 0<\delta < c T,
	\end{equation}
    and for all  $\mathcal{\bar P}\in \mathcal{B}_{\mathcal{P}}^{\delta}$, the closed-loop system  (\ref{sys-final}) and (\ref{controlfinalper}) is exponentially stable 
    %with $\mathcal{B}_{\mathcal{P}}^{\delta}$ as in Definition~\ref{def:ball}.
\end{theorem}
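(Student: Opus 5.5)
Our plan is to write the perturbed closed-loop characteristic matrix as the nominal one plus a perturbation, and then split the frequency axis into a low-frequency part and a high-frequency part. The characteristic matrix of (\ref{sys-final}) with (\ref{controlfinalper}) is
\[
\bar\Delta(\lambda;T)=-I+\sum_{i,j}H_{ij}e^{-\lambda(r_i+s_j)}-\frac{1}{1+\lambda T}\sum_{i,j}\bar H_{ij}e^{-\lambda(\bar r_i+\bar s_j)}+\int w_1 e^{\lambda\theta}d\theta-\frac{1}{1+\lambda T}\int_{-\bar\tau}^0\Big(\bar w_1-\tfrac{1}{\bar\tau}w_{\mathrm{des}}(\tfrac{\theta}{\bar\tau})\Big)e^{\lambda\theta}d\theta .
\]
The nominal $\Delta(\lambda;T)$ is recovered at $\delta=0$.

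Strong stability of the nominal filtered loop yields constants $\hat T_0$ and $\beta>0$ such that, for $T\in(0,\hat T_0)$, all roots of $\det\Delta(\lambda;T)$ satisfy $\Re\lambda\le-\beta$. This follows from Theorem~\ref{thmmargin}, together with the uniformity in $T$ obtained from the filtered spectral abscissa argument of \cite{prevpaper}. Since we need a uniform margin, we would work on the closed half-plane $\Re\lambda\ge -\beta/2$ and show that $\det\bar\Delta\neq0$ there, using a Rouché/homotopy argument in $\delta$. The continuity in $\delta$ of the roots on compact sets makes it enough to exclude roots entering from infinity and roots on the boundary line.

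\textbf{High frequencies ($|\lambda|\ge \Omega$).} The filter factor satisfies $|1/(1+\lambda T)|\le 1/(|\lambda|T)$ approximately, and the integral terms decay like $O(1/|\lambda|)$ by the Riemann--Lebesgue lemma and the piecewise smoothness of the kernels. The difference-equation part of $\bar\Delta$ is then $-I+\sum H_{ij}e^{-\lambda(r_i+s_j)}$ plus a term bounded by $C/(|\lambda|T)$. Because $\gamma_0<1$ (via $f(0)<0$ as in the proof of Theorem~\ref{thmmargin}), this matrix is uniformly invertible on $\Re\lambda\ge-\beta/2$, provided $\beta$ is small. So for $|\lambda|T\ge M$, with $M$ a large fixed constant, no roots occur; this bound is independent of $\delta$. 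Notably, the cancellation mismatch plays no role in this regime, which is exactly why $\gamma_1\ge1$ does no harm.

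\textbf{Intermediate and low frequencies ($|\lambda|T\le M$).} Here we bound $\|\bar\Delta-\Delta\|$. The terms $\bar H_{ij}-H_{ij}$ and the kernel difference contribute $O(\delta)$ by Proposition~\ref{lemeps}, and the $w_{\mathrm{des}}$ rescaling contributes $O(\delta)$ as well, since $|\bar\tau-\tau|\le\kappa\delta$. The critical term is the delay mismatch, $|e^{-\lambda(\bar r_i+\bar s_j)}-e^{-\lambda(r_i+s_j)}|\le C|\lambda|\delta$. With the factor $1/|1+\lambda T|$ this gives a bound of order $|\lambda|\delta/|1+\lambda T|\le C\delta/T$ uniformly in $\lambda$. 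Hence $\|\bar\Delta-\Delta\|\le C_1(\delta+\delta/T)\le C_2\delta/T$ for $T<\hat T_0\le 1$.

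It then remains to show a lower bound $\sigma_{\min}(\Delta(\lambda;T))\ge m>0$ on the region $\{\Re\lambda=-\beta/2,\ |\lambda|T\le M\}$, uniformly for small $T$, and on the corresponding contour closing the region. This is the main obstacle, because $\Delta$ depends on $T$ and the region grows as $T\to0$. We would handle it by treating two sub-ranges separately. For $|\lambda|\le \Omega$ fixed, $\Delta(\cdot;T)\to\Delta(\cdot;0)$, the target characteristic matrix, uniformly on compacts, and the target matrix has no roots with $\Re\lambda\ge-\beta$. For $\Omega\le|\lambda|\le M/T$ we rescale $\lambda=\xi/T$: the integrals are small, and $\Delta\approx -I+\frac{\xi}{1+\xi}G_\infty$-type expressions whose invertibility follows from $\gamma_0<1$. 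The reason is that $\frac{\lambda T}{1+\lambda T}\sum H_{ij}e^{-\lambda(\cdot)}$ has spectral radius at most $\gamma_0\cdot|\xi/(1+\xi)|$, and $|\xi/(1+\xi)|\le 1$ for $\Re\xi\ge0$ (and slightly more for $\Re\xi\ge-\beta T/2$, which is harmless). By Rouché's theorem, no roots enter the half-plane as long as $C_2\delta/T<m$, that is, for $\delta<cT$ with $c=m/C_2$. Exponential stability of the PDE then follows from Proposition~\ref{Lemma_equiv_stab}.
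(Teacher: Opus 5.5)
Your proposal is correct and is essentially the paper's small-gain argument with different bookkeeping. The paper's $G(\lambda)$ equals $-\bigl[(1+\lambda T)\Delta(\lambda;T)\bigr]^{-1}$, so its key estimate $\|\lambda G(\lambda)\|_{\mathcal{H}_\infty}\le M/T$ is exactly your uniform bound $\sigma_{\min}(\Delta(\lambda;T))\ge m$ combined with $|\lambda|/|1+\lambda T|\le 1/T$, and that bound rests on the same frequency split (target stability where $|\lambda|T$ is small, $\gamma_0<1$ where it is large); likewise the paper's $\mathcal{O}(\delta)$ uncertainty block acting on $\dot x$ is your $\mathcal{O}(\delta|\lambda|)$ mismatch acting on $x$. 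Working on $\Re\lambda\ge-\beta/2$ rather than the imaginary axis is a mild gain, since it yields a decay margin directly. Also, your arguments for $\sigma_{\min}(\Delta)\ge m$ hold on the whole half-plane, not just on the contour, so the Rouch\'e/homotopy step reduces to the observation that $\|\bar\Delta-\Delta\|<m$ already makes $\bar\Delta$ invertible there.
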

	 
\smallskip
\noindent\textbf{Proof.\ }
After application of the stability-preserving operator $\left(\frac{d}{dt}+ T I\right)$, the closed-loop equation of  (\ref{sys-final}) and (\ref{controlfinalper}),  can be decomposed as another feedback interconnection of the ``nominal model''
{\small\begin{multline}
\left\{\begin{array}{l}
T\left(\dot x(t)-\sum_{i=1}^k \sum_{j=1}^{\ell} H_{ij} \dot x\left(t-(r_i+s_j)\right)\right)=-x(t)
\\
%-\int_{-\tau}^0 w_2(\theta)x(t+\theta)d\theta+
+T \int_{-\tau}^0 w_1(\theta) \dot x(t+\theta)d\theta+\int_{-\tau}^0 \frac{1}{\tau} w_{\mathrm{des}}\left(\frac{\theta}{\tau}\right) x(t+\theta)d\theta+\\
\sum_{i=1}^k \sum_{j=1}^{\ell} (H_{ij} u_{ij}(t)+\tilde u_{ij}(t))+ u_0(t), 
\\
y_{ij}(t)=\dot x(t),\ \ 1\leq i\leq k,\ 1\leq j\leq\ell, \\
\tilde y_{ij}(t)=-x(t-(r_i+s_j))+u_{ij}(t),\ \ 1\leq i\leq k,\ 1\leq j\leq\ell, \\
y_0(t)=x(t),
\end{array}\right. \label{nominalmodel}
\end{multline}}
coupled with an uncertainty block described by
\begin{equation}\label{uncertainty}
\left\{ \begin{array}{rcl}
u_{ij}(t)&=& \int_{t- \bar{r}_i -\bar{s}_j}^{t-r_i-s_j} y_{ij}(\theta)d\theta, \ \ 1\leq i\leq k,\ 1\leq j\leq\ell, \\
\tilde u_{ij}(t)&=&  \left(\bar{H}_{ij}-H_{ij}\right) \tilde y_{ij}, \ \ 1\leq i\leq k,\ 1\leq j\leq\ell,\\
u_0(t)&=&\int_{-\tau}^0 w_1(\theta)y_0(t+\theta)d\theta-\int_{-\bar{\tau}}^0  \bar{w}_1(\theta) y_0(t+\theta)d\theta
\\
&& +\int_{-\bar{\tau}}^0 \frac{1}{\bar{\tau}} w_{\mathrm{des}}\left(\frac{\theta}{\bar{\tau}}\right) y_0(t+\theta)d\theta
\\
&&-\int_{-\tau}^0 \frac{1}{\tau} w_{\mathrm{des}}\left(\frac{\theta}{\tau}\right) y_0(t+\theta)d\theta,
\end{array}\right.
\end{equation}
We denote by $\mathcal{G(\lambda)}$ and $\Delta(\lambda)$ the transfer function of (\ref{nominalmodel}) and (\ref{uncertainty}), respectively.

Instrumental to the analysis of $\mathcal{G}$,  we first consider the transfer function
\begin{align*}
&G(\lambda)\triangleq \left[T\lambda\left(I-\sum_{i=1}^k\sum_{j=1}^{\ell} H_{ij} e^{-\lambda(r_i+s_j)} \right.\right. \nonumber\\
&\left.\left.-\int_{-\tau}^0 w_1(\theta) e^{\lambda\theta} d\theta \right) \right.\left.+I- \int_{-\tau}^0 \frac{1}{\tau} w_{\mathrm{des}}\left(\frac{\theta}{\tau}\right) x(t+\theta)d\theta\right]^{-1}.
\end{align*}
We note that
\[
\det\left(I - \int_{-\tau}^0 \frac{1}{\tau} w_{\mathrm{des}}\left(\frac{\theta}{\tau}\right) x(t+\theta)d\theta\right),
\]
is bounded away from zero on the imaginary axis, since the target system is assumed exponentially stable, while for small values of $T\lambda,\ \lambda\in\imath\mathbb{R}$, the first term of the inverted matrix acts as a bounded perturbation.  Using this observation and the expression of an inverse using the adjugate matrix, we conclude there exist numbers $R>0$ and $M_1>0$ such that
\begin{equation} \label{infnorm1}
\|\imath \omega\, G(\imath\omega)\|_2\leq \ell_1\omega,\ \ \ 0\leq \omega T\leq R.
\end{equation}
Letting $\tilde\omega=\omega T$, we can also express
\begin{multline}
 G(\imath\omega)=\left[ \left( I-\sum_{i=1}^k\sum_{j=1}^{\ell} H_{ij} e^{-\imath\tilde \omega\frac{r_i+s_j}{T}}
-\int_{-\tau}^0 w_1(\theta)e^{\imath \frac{\tilde\omega}{T}\theta}d\theta  \right)  \right.
\\
\left. +  \frac{I}{\imath \tilde \omega} 
- \frac{1}{\imath \tilde \omega} \int_{-\tau}^0 (w_1(\theta)-w_2(\theta)) e^{\imath \frac{\tilde\omega}{T} \theta}d\theta      \right]^{-1}\frac{1}{\imath\omega\,T} .
\end{multline}
We now consider the case where $\omega T>R$ or, equivalently, $\tilde\omega> R$.
If we let $T\rightarrow 0+$, the integral terms converge to zero uniformly for $\tilde \omega\in [R,\ +\infty)$.  Furthermore, the determinant of
\[
I-\sum_{i=1}^k\sum_{j=1}^{\ell} H_{ij} e^{-\imath\tilde \omega\frac{r_i+s_j}{T}},
\]
is bounded away from zero for $\tilde \omega\in\mathbb{R}$, following from the condition $\gamma_0<1$, and this remains so when adding the term 
$\frac{1}{\imath \tilde \omega}$, as it only induces shifts parallel to the imaginary axis. Consequently, there exist numbers $M_2>0$ and $\tilde T>0$ such that
\begin{equation}\label{infnorm2}
\|\imath\omega\, G(\imath\omega)\|_2\leq \frac{M_2}{T},\ \ \ \omega T\geq R,\ T<\tilde T.
\end{equation}
 From (\ref{infnorm1}) and (\ref{infnorm2}) we conclude
 \begin{equation}\label{hinfc1}
 \|\lambda G(\lambda)\|_{\mathcal{H}_{\infty}}\leq  \frac{\max(M_1 R,M_2)}{T},\ \ \forall T< \tilde T.
 \end{equation}
Using the same kind of arguments and partitioning of $\omega$-intervals, one can show that there is a constant $M_3$ such that 
 \begin{equation}\label{hinfc2}
 \|G(\lambda)\|_{\mathcal{H}_{\infty}}\leq  M_3.
 \end{equation}
From (\ref{hinfc1}) and (\ref{hinfc2}) it follows that there is a $\tilde  T>0$ and $M>0$ such that
\[
\|\mathcal{G}(\lambda)\|_{\mathcal{H}_{\infty}}\leq  \frac{M}{T},\ \ \forall T<\tilde T.
\]

We now turn our attention to the uncertainty block.   We have
\[
\begin{array}{l}
\left\| \int_{-\bar{\tau}}^0 \frac{1}{\bar{\tau}} w_{\mathrm{des}}\left(\frac{\theta}{\bar{\tau}}\right) e^{\lambda\theta}d\theta
-\int_{-\tau}^0 \frac{1}{\tau} w_{\mathrm{des}}\left(\frac{\theta}{\tau}\right) e^{\lambda\theta}d\theta\right\|_{\mathcal{H}_{\infty}}
\\
=\left\| \int_{-1}^0 w_{\mathrm{des}}\left(\theta\right) \left(e^{\lambda \bar{\tau}\theta}-e^{\lambda\tau\theta}\right)\right\|_{\mathcal{H}_{\infty}},
\end{array}
\]
which has an upper bound proportional to $|\tau-\bar\tau|$. Combining this observation with
Proposition~\ref{lemeps} and the structure of (\ref{uncertainty}), we conclude that there exist constants $\bar\kappa>0$ and $\bar\delta\in(0,\delta^\star)$ such that 
\[
\|\Delta(\lambda)\|_{\mathcal{H}_{\infty}}\leq \bar\kappa \delta,
\]
if $\delta<\bar\delta$.

Finally, the closed-loop system is stable if the product of the induced gains of nominal system and uncertainty block is smaller than one, leading to the conditions
\[
 \bar\kappa M \, \delta < T,\ \ \ \  T< \tilde T,\ \  \delta<\bar\delta. 
\]
Setting $c=1/(\bar\kappa M)$ and subsequently $\hat T=\min(\tilde T, \bar\delta/c)$, the proof is completed.  \hfill $\Box$

\medskip

\begin{remark} If a complementary point of view is taken based on model mismatch, where the nominal control law is considered fixed and the plant parameters deviate from the nominal values, the assertion of Theorem~\ref{theo:cone} remains valid. The main difference in the proof is that no uncertainty term describing the impact of reshaping function $w_{\mathrm{des}}$ needs to be considered.
\end{remark}

Note that condition (\ref{spie}) implies that locally a \emph{cone-like structure} is included in the stability region in the space of PDE parameters and filter parameter $T$. Even though (\ref{spie})  is based on a sufficient stability condition,  such a structure is effectively observed in stability charts under condition (\ref{relgamma}), which implies that without filter the closed-loop system is not strongly stable. This is illustrated in the following Section~\ref{seccase} with a case-study.

%We can remark
%\begin{enumerate}
%\item From the proof it follow that for small $T$ transition to instability (by changing PDE parameters) is always related to high-frequency characteristic roots.	
%\item Reading may wonder whether cone is related to conservatism $\rightarrow$ case studies %later confirm it is actually a cone
%\end{enumerate}

\subsection{Stability charts for a special case}\label{seccase}

We consider the PDE system (\ref{eq:hyperbolic_couple}) described by scalars $Q,\ R$ and matrices
\begin{align*}
    \Lambda=\begin{pmatrix}
        \lambda_1 &0 \\ 0 & -\mu_1
    \end{pmatrix}, \quad \Sigma=\begin{pmatrix}0 & \Sigma^{+-}\\ \Sigma^{-+}&0\end{pmatrix}.
\end{align*}
 Then Equation (\ref{sys-final}) takes the form
\begin{equation}\label{plantpde}
x(t)= H_{11} x(t-\tau)+\int_{-\tau}^0 w_1(\theta) x(t+\theta)\; d\theta+ u(t), 
\end{equation}
with $\tau=\frac{1}{\mu_1}+\frac{1}{\lambda_1}$, $H_{11}=RQ$, and where the function $w_1$ is given by
\begin{align}
	w_1(\theta) =& \left(\frac{a}{\tau} + \frac{d^{-}(\theta)}{\tau^2}\right) J_0\left(2 \sqrt{h^{-}(\theta)}\right) \nonumber \\
    &+ \frac{d^{-}(\theta)}{\tau^2} J_2\left(2 \sqrt{h^{-}(\theta)}\right), \quad \theta \in [-\tau, 0],\label{eq:explicit-N-Bessel}
\end{align}
in which
\[
\begin{aligned}
	h^{-}(\theta) & \triangleq -\frac{\Sigma^{+-} \Sigma^{-+}}{\lambda_1 \mu_1  \tau^2}\theta(\tau+\theta), &  d^{-}(\theta) & \triangleq \frac{\Sigma^{+-} \Sigma^{-+}}{\lambda_1 \mu_1}(\tau+\theta b), \\
	a & \triangleq \frac{Q}{\mu_1}\Sigma^{-+} + \frac{R}{\lambda_1}\Sigma^{+-},  & b & \triangleq 1 + QR.
\end{aligned} 
\]
and the functions $J_i,\ i\geq 0$, denote the Bessel functions of the first kind.
For more details, we refer to~\cite{saba2019stability}.

We assume specific model mismatch described by one parameter $\varepsilon\in(-1,\ \infty)$,   perturbing $(\lambda_1,\mu_1,\Sigma^{+-},\Sigma^{-+} )$ to
\begin{equation}\label{parerex}
\left(\frac{\lambda_1}{1+\varepsilon}, \frac{\mu_1}{1+\varepsilon},\frac{\Sigma^{+-}}{1+\varepsilon},  \frac{\Sigma^{-+}}{1+\varepsilon} \right),
\end{equation}
which has a clear physical implication, namely that the ``round-trip" time $\tau$ is subject to relative estimation error $\varepsilon$, while the net increase/decay of a signal along a cycle is correctly estimated. From (\ref{eq:explicit-N-Bessel}) it can be seen that this mismatch leads  to a rescaling of the kernel $w_1$, while the value of $H_{11}$ is not altered. Now, the control law (\ref{controlfinalper}) simplifies to
%We consider the following controller for (\ref{plantpde}),
\begin{multline} \label{controlpde}
T\dot u(t)+u(t)=-H_{11} x(t-(1+\varepsilon)\tau)- \int_{-\tau(1+\varepsilon)}^0  x(t+\theta)
\\
\times \left(\frac{1}{1+\varepsilon} w_1\left(\frac{\theta}{1+\varepsilon}\right) -\frac{1}{(1+\varepsilon)\tau}w_{\mathrm{des}}\left(\frac{\theta}{(1+\varepsilon)\tau}\right)  \right)\,d\theta,
\end{multline}
parametrized by the pair $(T,\varepsilon)$.   %\textcolor{red}
{Note that the parametric perturbations (\ref{parerex}) 
%satisfy {Assumption~\ref{Assum:Bound_mismatch}} for small values of $\varepsilon$, with $\delta=|\varepsilon|$.
imply $\mathcal{\bar P}\in\mathcal{B}_{\mathcal{P}}^{2\epsilon} $ for small values of $\varepsilon$.
%, with $\delta=|\varepsilon|$.
}

In the numerical experiments reported in what follows, we take the  numerical values 
\begin{equation}\label{numvalpde}
\lambda_1=2, \ \mu_1=\frac{\sqrt{2}}{2},\ \Sigma^{+-}=5,\ \Sigma^{-+}=1,\ Q=1.2,\ R=0.5,
\end{equation}
which lead to $\tau=\frac{1}{2}+\sqrt{2}$, $H=0.6$. For the controller,  $w_{\mathrm{des}}$ is specified as the constant function $w_{\mathrm{des}}=0.45$
resulting in the target \,dynamics
\[
x(t)=\int_{-\tau}^{0} \frac{0.45}{\tau} x(t+\theta)d\theta.
\]
Then the open-loop system is unstable and the target system exponentially stable, with spectral abscissa equal to $1.735$ and $-0.7468$, respectively. Furthermore, we have
\[
\gamma_0=|H_{11}|<1,\ \gamma_1=2|H_{11}|>1,
\]
and from Theorem~\ref{thmmargin}, the filtered closed-loop system is exponentially stable for small values of $T$ if $\varepsilon=0$. To characterize the stability margin in $T$, an application of Algorithm~\ref{algsweep} leads us to $l=3$  and $T_1\approx  0.3056,\ T_2\approx0.3806$ and $T_3\approx0.1615$, from which we conclude $\tilde{T}=T_3$.  These values are indicated in Figure~\ref{figstabdist}.

\subsubsection{Behavior for small $T$ and $\varepsilon$} \label{parsmallT}

We first construct the stability chart of the closed-loop system  (\ref{plantpde}) and (\ref{controlpde}) in the $(T,\varepsilon)$-parameter  space   where we ignore the two distributed delay terms, which brings us to 
\[
x(t)=H_{11}x(t-\tau)+u(t),\ \  T\dot u(t)+u(t)=-H_{11}x(t-(1+\varepsilon)\tau).
\]
As we shall see, this simplification still leads to accurate results for small values of $T$ and $\varepsilon$. At the same time, a systematic procedure for constructing the chart exists.

If we normalize parameter $T$ and eigenvalue parameter $\lambda$ as follows,
\[
\hat\lambda=\lambda\tau,\ \ \hat T=T/\tau,
\]
then the characteristic function takes the form
\[
H(\hat \lambda;\ \hat T,\varepsilon)\triangleq1-H_{11} e^{-\hat\lambda}+H_{11}\frac{1}{1+\hat\lambda \hat T} H_{11} e^{-\hat\lambda(1+\varepsilon)}.
\]
Assuming that (\ref{relgamma}) is satisfied,  we distinguish between two cases.

\paragraph{Case  $H_{11}\in(\frac{1}{2}\ ,1)$}\  \label{caspos}
We adopt the D-subdivision approach to find pairs $(\hat T,\varepsilon)$ for which there are imaginary axis characteristic roots.
We always have $H(0;\ \hat T,\varepsilon)=1$.  Looking for a characteristic root on the imaginary axis, $\lambda=\imath\omega$, with $\omega\in\mathbb{R}_{\geq 0}$, leads us to
\begin{equation}\label{twosides}
1-H_{11} e^{-\imath\omega}+H_{11} \frac{e^{-\imath\omega(1+\varepsilon)}}{1+\imath\omega \hat T}=0.
\end{equation}
Defining auxiliary variables  $(\theta,\Delta)\triangleq (\omega \hat T,\omega \varepsilon)$, we can rewrite the condition as
\begin{equation}\label{solutions}
1-\frac{1}{H_{11}} e^{\imath\omega}= \frac{e^{-\imath\Delta}}{1+\imath\theta},
\end{equation}
where $\omega$ has been separated from the other free parameters $(\Delta,\theta)$. Note that a solution is  possible if and only if the left-hand side has modulus smaller or equal to one.  

Let $\Omega_{0}=[\omega_1,\ \omega_2]\in \left(\frac{3\pi}{2},\ \frac{5\pi}{2}\right)$, with $\omega_2\neq \omega_1$, such that \begin{equation}\label{condmodulus}
\left|1-\frac{1}{H_{11}}e^{\imath\omega_i}\right|=1,\ \ i=1,2.
\end{equation}
 For every $\omega\in\Omega_0$, corresponding values for $\theta$ and $\Delta$ solving (\ref{solutions}) can be determined by matching modulus and argument of both sides, respectively. 
More specifically, let $\Omega_0\ni\omega\mapsto (\hat\theta(\omega),\hat\Delta(\omega) )$ be the continuous function describing all solutions for which $\Delta(\omega)\in (0,2\pi)$.  Such a continuous function always exist since
\[
\angle\left(\left( 1-\frac{1}{H_{11}}e^{\imath\omega}\right) \left(1+\imath\hat\theta(\omega)\right)  \right)\in(0,\pi]\cup(-\pi,0),\ \forall\omega\in\Omega_0.
\]
Let $\mathcal{S}_0= \left\{ \left( \hat\theta(\omega),\hat\Delta(\omega)\right):\ \omega\in\Omega_0  \right\}$ be the corresponding branch in the $(\theta,\Delta)$-parameter space. Other branches of solutions are then given by 
\[
\mathcal{S}_k\triangleq \left\{ \left( \hat\theta(\omega),\hat\Delta(\omega)+2\pi k\right):\ \omega\in\Omega_0  \right\},
\]
 with $k\in\mathbb{Z}$. Note that each element of $\mathcal{S}_{-1}$ has a strictly negative second component, which is apparent from Figure~\ref{figmother}, where we visualize the sets $\mathcal{S}_0$ and $\mathcal{S}_{-1}$ for $H_{11}=0.6$.  
 
 Condition $\left|1-\frac{1}{H_{11}}e^{\imath\omega}\right|\leq 1$ is also satisfied for the %half-open
 interval 
 $\omega\in (0,\ \omega_2-2\pi]$. This gives rise to branches
 \[
\mathcal{T}_k\triangleq \left\{ \left( \hat\theta(\omega),\hat\Delta(\omega)+2\pi k\right):\ \omega\in (2\pi,\omega_2]\right\}.
 \]

 \begin{figure}
 \begin{center}
 	\resizebox{8.5cm}{!}{\includegraphics{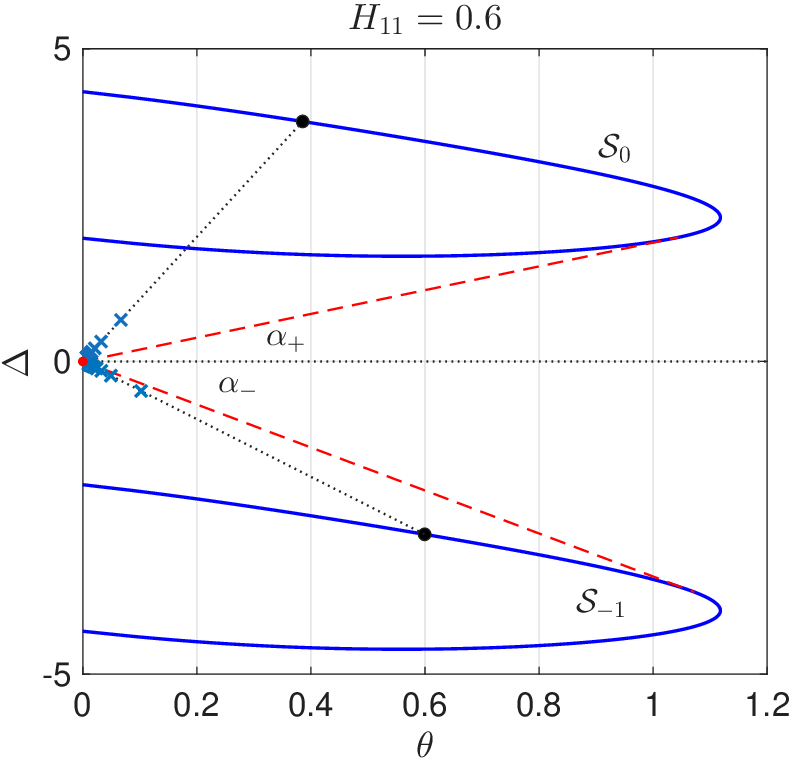}}
 	\caption{\label{figmother} Sets $\mathcal{S}_0$ and $\mathcal{S}_{-1}$ for $H_{11}=0.6$. The tick black dots represents $2$ points on $\cup_{k\in\mathbb{Z}} \mathcal{S}_k$, whose off-spring of critical points, described by (\ref{offspring}), are indicated with crosses.  The two dashed tangents to $\mathcal{S}_0$ and $\mathcal{S}_{-1}$, respectively, bound the largest cone that is contained in the stability region.}
 \end{center}
\end{figure}

 Now we are rea\,dy to construct the stability crossing curves in the $(\hat T,\varepsilon)$-parameter space.  Every point on $\bigcup_{k\in\mathbb{Z}} \mathcal{S}_k$ is characterized by one frequency $\omega\in [\omega_1,\ \omega_2]$ and coordinates $(\theta,\Delta)$, leading directly to a critical point $(\hat T,\varepsilon)=\left(\frac{\theta}{\omega},\frac{\Delta}{\omega} \right)$. Furthermore, the above derivation can be repeated for $\omega \in\Omega_\ell$, with $\Omega_\ell=[\omega_1+\ell 2\pi,\ \omega_2+\ell 2\pi]$ and $\ell\in\mathbb{Z}_{\geq 0}$, which leads to the same curves  $\mathcal{S}_k$.  Consequently, each point $(\theta,\Delta)\in \bigcup_{k\in\mathbb{Z}} \mathcal{S}_k$ and corresponding frequency $\omega\in[\omega_1,\ \omega_2]$ actually defines an infinite number of critical points, namely
 \begin{equation}\label{offspring}
 (\hat T,\varepsilon)=\left(\frac{\theta}{\omega+2 \pi\ell},\frac{\Delta}{\omega+ 2\pi\ell} \right),\ \ \ell \in\mathbb{Z}_{\geq 0}.
 \end{equation}
This is further illustrated in Figure~\ref{figmother}. Similarly, every point on $\bigcup_{k\in\mathbb{Z}} \mathcal{T}_k$ is characterized by one frequency $\omega\in (2\pi,\ \omega_2]$ and coordinates $(\theta,\Delta)$, which translate into a critical point
\[
(\hat T,\varepsilon)=\left(\frac{\theta}{\omega-2\pi},\frac{\Delta}{\omega-2\pi} \right).
\]
 Thus, the set of stability crossing curves in the $(\hat T,\varepsilon)$-parameter space consists of an infinite number of frequency-dependent scaled copies of the curves $\mathrm{S}_k$ and a scaled copy of the curves $\mathcal{T}_k$, creating a fractal-like structure around $(\hat T,\varepsilon)=(0,0)$. This is clarified with Figure~\ref{figcrossing}. Note that the stability crossing curves emanating from $\mathcal{S}_k$ are bounded, unlike the ones emanating from $\mathcal{T}_k$.

\begin{figure}
 \begin{center}
 	\resizebox{12cm}{!}{\includegraphics{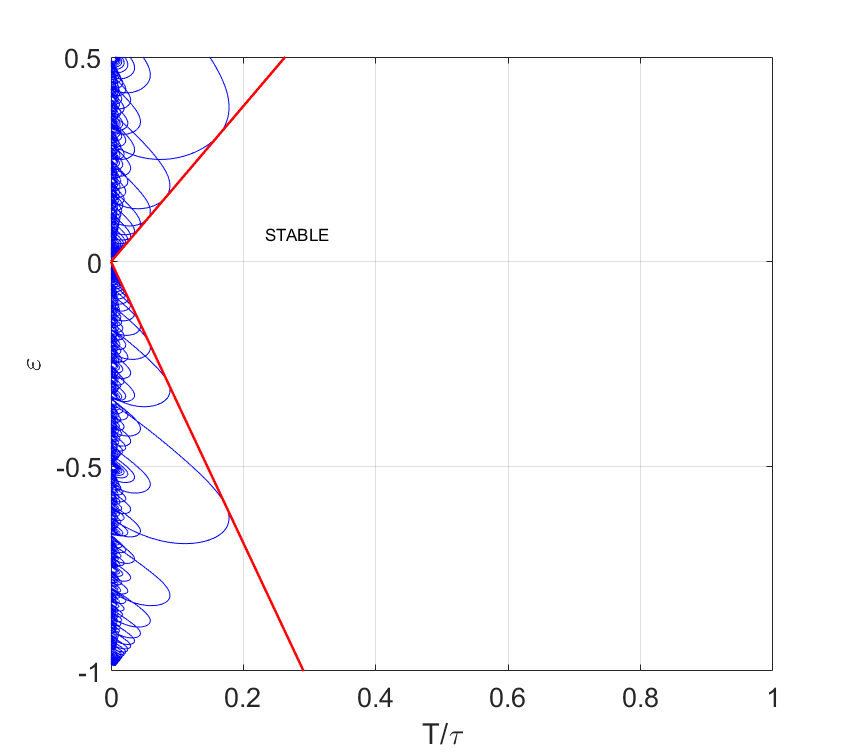 }}
 \resizebox{12cm}{!}{\includegraphics{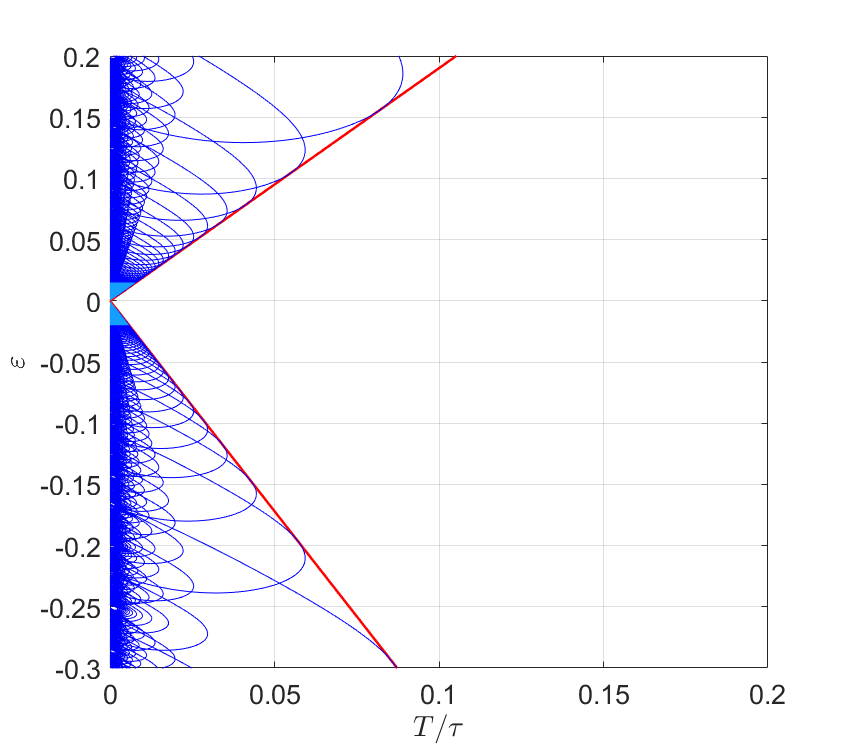 }}
 
\caption{\label{figcrossing} Stability crossing curves of (\ref{plantpde}) and (\ref{controlpde})  in the $(T,\varepsilon)$-parameter space  when neglecting the two distributed delay terms. The lower pane is obtained by zooming in on the upper pane. The scaling property (\ref{offspring}) induces infinitely many stability crossing curves in the neighborhood of $(T,\varepsilon)=(0,0)$, and a fractal nature of the plot. Only finitely many curves are shown, while the superimposed triangles indicate the regions with a high concentration.}
 \end{center}
\end{figure}

 %\begin{figure}
%	\begin{center}
%		\resizebox{10cm}{!}{\includegraphics{crossingcurves1.png}}\\
%			\resizebox{10cm}{!}{\includegraphics{crossingcurves2.png}}
%		\caption{\label{figcrossing} Stability crossing curves in the $(T,\varepsilon)$-parameter space for $H_{11}=0.6$. The lower pane is obtained by zooming in on the upper pane.}
%	\end{center}
%\end{figure}

 In Figures~\ref{figmother}-\ref{figcrossing}, we also indicate the largest cone that is contained in the stability region.  Finally, in Figure~\ref{fighoek} we highlight the dependence of the angles $\alpha_{\pm}$,  describing this cone, as a  function of $H_{11}$. Observe that
 \[
 \lim_{H_{11}\rightarrow \frac{1}{2}+} \alpha_+(H_{11})=+\infty,\  \lim_{H_{11}\rightarrow \frac{1}{2}+} \alpha_-(a)=-\infty,\ 
 \] 
 which is consistent with the fact that for $\left|H_{11}\right|<\frac{1}{2}$ and $T=0$ the system is delay-independent stable (i.e., stable for all $\tau$ and $\varepsilon$). At the same time, we have
 \[
 \lim_{H_{11}\rightarrow 1-} (\alpha_+(H_{11})- \alpha_-(H_{11}) )= 0,
 \]
 which is also natural as for $\left|H_{11}\right|>1$,  the system is always unstable due to instability of the associated delay-difference equation $x(t)=H_{11} x(t-\tau)$.

   \begin{figure}
   	\begin{center}
   		\resizebox{11cm}{!}{\includegraphics{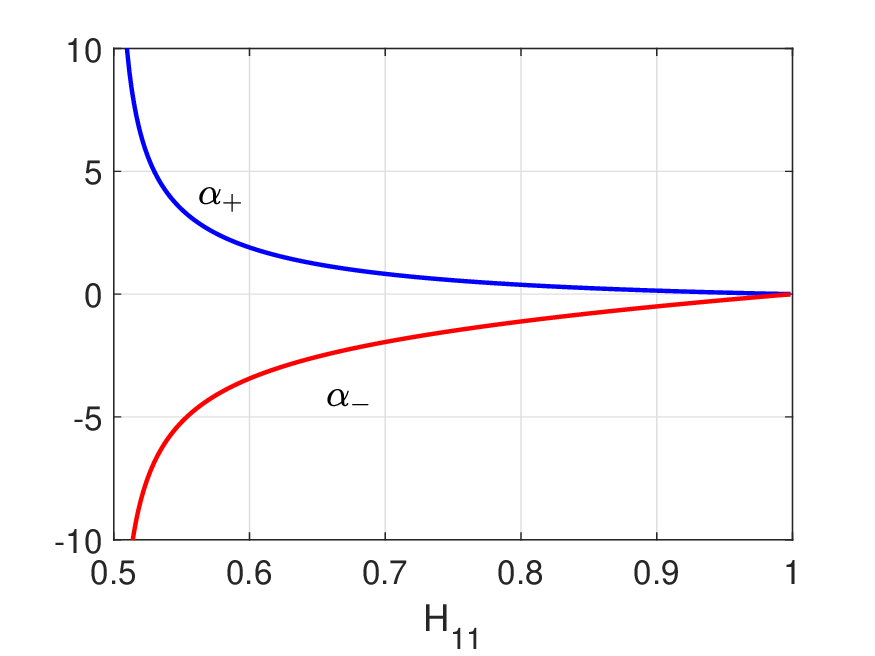}}
   		\caption{\label{fighoek} Angles $\alpha_+$ and $\alpha_-$ as a function of parameter $H_{11}$.}
   	\end{center}
   \end{figure}

 \paragraph{Case $H_{11}\in(-1,\ -\frac{1}{2})$}\
 The analysis is similar to the one in the previous case.  Neither the curves $\bigcup_{k\in\mathbb{Z}} \mathcal{S}_k$ nor the angles $\alpha_+,\ \alpha_-$ depend on the sign of $H_{11}$. The main difference is the parametrization in $\omega$. Now condition (\ref{condmodulus}) is not satisfied for an interval starting with $\omega=0$. As a consequence, no distinction as between curves $\mathcal{S}_k$ and $\mathcal{T}_k$ needs to be made, while all stability crossing curves are bounded.
 
 %In Figure~\ref{figcrossingb} the stability crossing curves are visualized for $a=-\frac{3}{4}$.
%\begin{figure}
% 	\begin{center}
% 		\resizebox{10cm}{!}{\includegraphics{crossingcurves1b.png}}
% 		\caption{\label{figcrossingb} Stability crossing curves in the $(T,\varepsilon)$-parameter space for $a=-\frac{3}{4}$.}
% 	\end{center}
% \end{figure}

\subsubsection{Taking into account distributed delays}
%parameters from the submitted SICON paper.
%We consider the model mismatch
%\[
%\lambda_1/(1+\varepsilon), \mu_1/(1+\varepsilon), \sigma_{12}/(1+\varepsilon),\ \sigma_{21}/(1+\varepsilon).
%\]

For numerical values (\ref{numvalpde}), we have computed the stability region of (\ref{plantpde}) and (\ref{controlpde}) in the $(T,\varepsilon)$-parameter space using numerical continuation  of branches in the $(T,\varepsilon,\omega)$-space that characterize the presence of imaginary axis roots $\pm\imath\omega$, followed by projection on the $(T,\varepsilon)$-plane, since a semi-analytic approach is no longer viable.  As the stability crossing curves need to be continued one-by-one, we have restricted ourselves to these curves adjacent to the central stability region.

Figure~\ref{figstabdist} shows the obtained stability region in the $(T,\varepsilon)$-parameter space. For comparison, in Figure~\ref{figcrossing} the stability region is visualized for $w_1=w_{\mathrm{des}}\equiv 0$. The correspondence for small $(T,\varepsilon)$ can be explained as follows. Since the closed-loop system is exponentially stable for $T=\varepsilon=0$, with and without taking the distributed delays into account, destabilizing perturbations for small values of $(T,\varepsilon)$ must always be characterized by unstable characteristic roots with very large imaginary parts. This is also apparent from the derivation in \S\ref{parsmallT} and particularly expression (\ref{offspring}), which shows that the smaller the critical values of $(T,\varepsilon)$ are, the higher is  their frequency (imaginary part). Distributed delay terms are then characterized by highly oscillatory integrands that are almost integrated out.

Note further that the stability region in Figure~\ref{figstabdist} is stretched in the direction of increasing $T$ and decreasing $\varepsilon$, since the underestimation of the plant delay for negative $\varepsilon$ is partly compensated by the lag induced by the filter.

\begin{figure}
 \begin{center}
 	\resizebox{12cm}{!}{\includegraphics{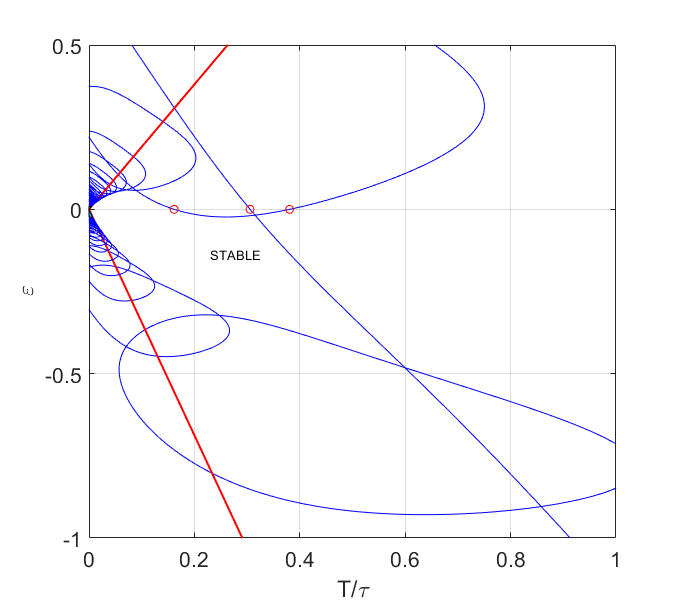 }}
 	\resizebox{12cm}{!}{\includegraphics{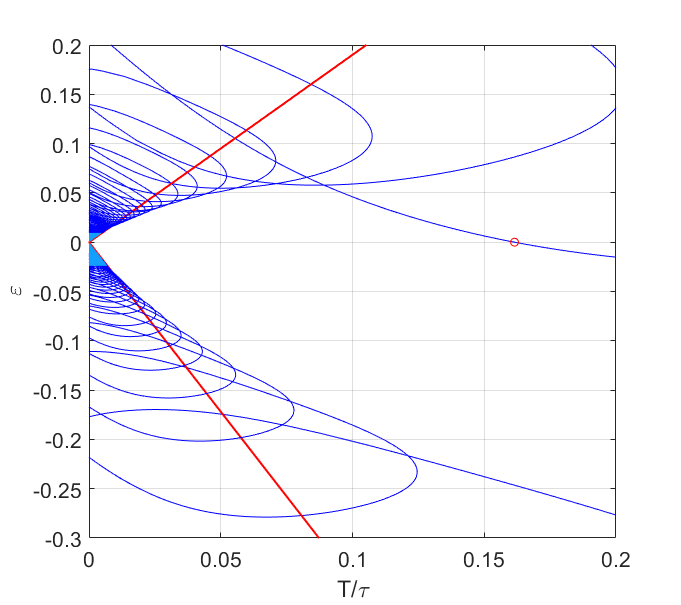 }}
 	\caption{Stability region of (\ref{plantpde}) and (\ref{controlpde}) in the $(T,\varepsilon)$-parameter space. Only the stability crossing curves in the region adjacent to central stability region are shown. For $\varepsilon=0$, the critical values of $T$, computed by Algorithm~\ref{algsweep}, are indicated by circle markers. The lower pane is obtained by zooming in on the upper pane. The superimposed triangles indicate the regions with a high concentration of stability crossing curves.
 	\label{figstabdist} }
 \end{center}
\end{figure}

\section{Conclusions}\label{secconcl}
Using a combined approach, relying on delay-based modeling of PDEs, robust control techniques,  as well as the analytic and numerical construction of stability charts, we made a qualitative and quantitative analysis of the choice of the cut-off frequency of the filter, used to mitigate the  fragility of control law (\ref{control-nominal}), in appropriately balancing robustness at high frequencies with nominal performance at low frequencies. We assumed that all the velocities of the PDE system~\eqref{eq:hyperbolic_couple} are distinct. However, the case of identical velocities can be treated in a similar manner. More precisely, in the presence of ``isotachic'' blocks (i.e., groups of states sharing the same transport speed), it was shown in~\cite{hu2015control} that a suitable diagonalizing transformation can be performed to decouple the isotachic states. After applying this preliminary change of variables, the backstepping methodology can still be used to rewrite the system in the form~\eqref{sys-final}.

Future work includes the adaptation of numerical methods for control of systems governed by delay-differential algebraic equation (DDAE) models towards hyperbolic PDE systems, with particular attention to the optimal shaping of distributed delay kernels in the controls, and the integrated design of filters and controllers.

\section*{Acknowledgments}
This work received funding from the Internal Funds of KU Leuven, project C14/22/092, and from the French government under the France 2030 program, reference ANR-11-IDEX-0003 within the OI H-Code.

\appendix

\subsection{Remodeling as an IDE }\label{apA}

%\subsubsection{Nominal stabilizing backstepping controller}
In this section, we recall the design of the backstepping controller introduced in~\cite{Auriol2019a,coron2017finite} to stabilize the system~\eqref{eq:hyperbolic_couple}. In particular, we emphasize the connections between~\eqref{eq:hyperbolic_couple} and the IDE system~\eqref{sys-final}.

\subsubsection{First backstepping transformation: removing the in-domain couplings}
Consider the backstepping transformation introduced in~\cite{coron2017finite,hu2019boundary}
\begin{equation}
    \alpha(t,y)=z(t,y)-\int_0^y K(y,\xi) z(t,\xi)\,d\xi, \label{BS_transf}
\end{equation}
where $\alpha(t,y)=(\alpha^\top_+(t,y),\alpha^\top_-(t,y))^\top \in\mathbb{R}^{k+\ell}$ and
\[
K(y,\xi)=\begin{pmatrix}
    K^{++}(y,\xi)& K^{+-}(y,\xi)\\
    K^{-+}(y,\xi)& K^{--}(y,\xi)
\end{pmatrix},
\]
with $K^{++} \in \mathbb{R}^{k\times k}$, $K^{+-} \in \mathbb{R}^{k\times \ell}$, $K^{-+} \in \mathbb{R}^{\ell\times k}$, and $K^{--} \in \mathbb{R}^{\ell\times \ell}$ defined on the triangular domain $\mathcal{T}=\{(y,\xi)\in [0,1]^2:~y\geq \xi\}$.  
The kernels satisfy the PDE system
\begin{align}
    &\Lambda  K_y(y,\xi)+ K_\xi(y,\xi)\Lambda=- K(y,\xi)\Sigma,\label{eq:K_PDE} \\
    &K(y,y)\Lambda-\Lambda K(y,y)=-\Sigma,\\
    &(K^{--}(y,0)\Lambda^-)_{ij}=(K^{-+}(y,0)\Lambda^+Q)_{ij}, \quad i\geq j, \label{eq:K_--}
\end{align}
with boundary conditions $K_{ij}^{++}(y,0)=0$ for $i\geq j$, $K_{ij}^{++}(1,\xi)=0$ for $i<j$, and $K_{ij}^{--}(1,\xi)=-\tfrac{\Sigma^{--}_{ij}}{\mu_i-\mu_j}$ for $i<j$. It was shown in~\cite{di2018stabilization,hu2019boundary} that this kernel system admits a unique bounded piecewise-continuous solution. The transformation~\eqref{BS_transf} is thus a Volterra transformation, and therefore boundedly invertible~\cite{yoshida1960lectures}. Its inverse reads
\begin{equation}
    z(t,y)=\alpha(t,y)+\int_0^yL(y,\xi)\alpha(t,\xi)\,d\xi, \label{eq:inv_backstepping}
\end{equation}
where $L$ satisfies the Volterra equation
\begin{equation}
   L(y,\xi)=K(y,\xi)+\int_\xi^yK(y,\eta)L(\eta,\xi)\,d\eta,\quad (y,\xi)\in\mathcal{T}. \label{eq:def_L}
\end{equation}

The transformation~\eqref{BS_transf} maps the system~\eqref{eq:hyperbolic_couple} into the target system defined for all $t>0$ and all $y \in[0,1]$ by
\begin{equation} \label{eq:hyperbolic_target_1}
\left\{
\begin{aligned}
&\alpha_t(t, y)+\Lambda \alpha_y(t, y)=\begin{pmatrix}
    0&G^{-+}(y)\\ 0&G^{--}(y)
\end{pmatrix} \alpha(t,0),  \\
&\alpha_+(t, 0)=Q \alpha_-(t, 0),\\
&\alpha_-(t, 1)=R \alpha_+(t, 1)+u(t)+\int_0^1N(\xi)\alpha(t,\xi)\,d\xi,
\end{aligned}
\right.
\end{equation}
with
\begin{align}
    G^{-+}(y)&=K^{-+}(y,0)\Lambda^--K^{++}(y,0)\Lambda^+Q, \label{eq:def_G_-+}\\
    G^{--}(y)&=K^{--}(y,0)\Lambda^--K^{+-}(y,0)\Lambda^+Q.\label{eq:def_G_--}
\end{align}
Due to~\eqref{eq:K_--}, the matrix $G^{--}$ is strictly upper-triangular. The function $N$ is defined as 
\[
N(\xi)=\begin{pmatrix}
   N^+(\xi) & N^-(\xi) 
\end{pmatrix}, \qquad \xi\in[0,1],
\]
with
\begin{align}
    N^+(\xi)&=-L^{-+}(1,\xi)+RL^{++}(1,\xi), \label{eq:def_N_+}\\
    N^-(\xi)&=-L^{--}(1,\xi)+RL^{-+}(1,\xi). \label{eq:def_N_-}
\end{align}

Through this (invertible) Volterra transformation, part of the in-domain couplings has been shifted to the boundary (via the integral terms $N^+$ and $N^-$), while the remaining couplings appear as nonlocal terms. These terms can be slightly modified to rewrite the system  a difference system.

\subsubsection{Second integral transformation: removal of nonlocal terms}
We now apply the integral transformation from~\cite{coron2017finite}
\begin{equation}
    \alpha(t,y)=w(t,y)-\int_0^1\begin{pmatrix}
        0_{k\times k} & 0_{k\times \ell}\\
        0_{\ell\times k} & F(y,\xi)
    \end{pmatrix}w(t,\xi)\,d\xi, \label{eq:backstepping_last}
\end{equation}
where $w(t,y)=(w^\top_+(t,y),w^\top_-(t,y))^\top \in\mathbb{R}^{k+\ell}$ and $F \in \mathbb{R}^{\ell\times \ell}$ is strictly upper-triangular on $[0,1]^2$, solving
\begin{align}
&\Lambda^- F_y(y,\xi)+F_\xi(y,\xi)\Lambda^-=0,\label{eq:F_PDE}\\
&F(y,0)=G^{--}(y)(\Lambda^-)^{-1},\quad F(0,\xi)=0,\label{eq:F_BC}
\end{align}
with $\Lambda^-=\mathrm{diag}(\mu_1,\dots,\mu_m)$.  
This PDE system admits a unique piecewise-continuous solution, obtained explicitly by the method of characteristics~\cite{coron2017finite}. Owing to its cascade structure, transformation~\eqref{eq:backstepping_last} is affine and invertible. It maps~\eqref{eq:hyperbolic_target_1} into the target system defined for all $t>0,~ y \in[0,1]$ by
\begin{equation} \label{eq:hyperbolic_target_2}
\left\{
\begin{aligned}
&w_t(t, y)+\Lambda w_y(t, y)=\begin{pmatrix}
    G^{-+}(y)w_-(0)\\ \bar G^{--}(y)w_-(1)
\end{pmatrix}, \\
&w_+(t, 0)=Q w_-(t, 0),\\
&w_-(t, 1)=R w_+(t, 1)+u(t)+\int_0^1N_w(\xi)w(t,\xi)\,d\xi, 
\end{aligned}
\right.
\end{equation}
where $\bar G^{--}$ solves the Volterra equation
\begin{equation}
   \bar G^{--}(y)=F(y,1)\Lambda^-+\int_0^1F(y,\xi)\bar G^{--}(\xi)\,d\xi, \label{eq:def_G_bar}
\end{equation}
and
\[
    N_w(\xi)=\begin{pmatrix}
        N^+(\xi)& \hspace{-0.25cm}F(1,\xi)+N^-(\xi)-\int_0^1N^-(\nu)F(\nu,\xi)\,d\nu
    \end{pmatrix}.
\]

\subsubsection{Time-delay representation and control law achieving fixed-time stability} \label{parfixedtime}
Using the method of characteristics,~\cite{Auriol2019a} shows that $w_-(t,1)$ satisfies an integral delay equation of the form~\eqref{sys-final}, with matrices $H_i$ and $w_1$ expressed explicitly in terms of the PDE parameters, and $\tau=\tfrac{1}{\lambda_1}+\tfrac{1}{\mu_1}$. 
%The IDE system~\eqref{sys-final} serves as a comparison model sharing equivalent stability properties with the PDE~\eqref{eq:hyperbolic_target_2}.
%\begin{lemma}(\cite[Theorem 6.3.1]{auriol2024contributions})\label{Lemma_equiv_stab}
%    Exponential stability of~\eqref{sys-final} in the sense of Definition~\ref{def_exp_stab_IDE} is equivalent to exponential stability of~\eqref{eq:hyperbolic_couple} in the sense of Definition~\ref{Def_stability}.
%\end{lemma}
%\noindent\textbf{Proof.} See~\cite[Theorem 6.3.1]{auriol2024contributions}. The result follows from the bounded invertibility of the successive integral transformations. \hfill $\Box$

From~\eqref{eq:hyperbolic_target_2}, a fixed-time stabilizing controller was designed in~\cite{coron2017finite}:
\begin{align}
    u(t)=-Rw_+(t,1)-\int_0^1 N_w(\xi)w(t,\xi)\,d\xi, \label{eq:control_nominal}
\end{align}
which enforces $w_-(t,1)=0$, and thus guarantees finite-time stability of both~\eqref{eq:hyperbolic_target_2} and~\eqref{eq:hyperbolic_couple}. In IDE form, this law becomes (\ref{control-nominal}), with $w_2=w_1$.
%\begin{align}
%    u(t)=-\sum_{i=1}^k \sum_{j=1}^{\ell} H_{ij} x\!\left(t-%(r_i+s_j)\right)+\int_{-\tau}^0 w_1(\theta)\, x(t+%\theta)\,d\theta,
%\end{align}
%again showing finite-time stability of the closed loop system.

%In \cite{\eqref{eq:hyperbolic_target_2}} However, this nominal design neglects robustness. As highlighted in~\cite{Auriol2019a,auriol2020robust}, canceling all reflection terms may yield zero delay-robustness margins and poor tolerance to parameter uncertainties, making~\eqref{eq:control_nominal} impractical. This limitation stems from the non-strictly-proper nature of the control operator. Alternative strategies have been proposed: partial reflection cancellation~\cite{auriol2018delay}, or the addition of a well-tuned low-pass filter~\cite{Auriol2023} to enforce strict properness and thus delay-robustness. %The latter leverages Assumption~\ref{assum_robustness}, which ensures natural open-loop exponential stability at high frequencies. 
%Using such a filter provides extra flexibility: high performance at low frequencies while guaranteeing robustness margins at high frequencies, thereby enabling an effective trade-off between performance and robustness. {\color{blue}This last paragraph needs to be rewritten depending on what is already written in the introduction.}

\subsection{Proof of Lemma~\ref{lemma:convergence_w}}
\label{secapB}
The overall proof consists of three main steps.

\subsubsection{Representation of kernel equations~\eqref{eq:K_PDE}--\eqref{eq:K_--} as integral equations}

%In this section, we show how the kernel %equations~\eqref{eq:K_PDE}--\eqref{eq:K_--} can be %rewritten as integral equations. This will be %essential for the proof of %Lemma~\ref{lemma:convergence_K}. 
Following the computations introduced in~\cite{hu2015control}, we apply the method of characteristics. For each $1\leq i \leq \ell$ and $1\leq j \leq k$, and all $(y,\xi)\in \mathcal{T}$, we have
\begin{align}
 &K^{-+}_{ij}(y,\xi)=-\frac{\Sigma_{ij}^{-+}}{\lambda_j+\mu_i} +\int_0^{\frac{y-\xi}{\lambda_j+\mu_i}} \Big[\sum_{p=1}^k\Sigma_{pj}^{++}K^{-+}_{ip}(y-\mu_is,\nonumber \\
 &\xi+\lambda_js)+\sum_{p=1}^\ell\Sigma_{pj}^{-+}K^{--}_{ip}(y-\mu_is,\xi+\lambda_js)\Big] \,ds.  
\end{align}
Also, for each $1\leq i \leq \ell$ and $1\leq j \leq \ell$, and all $(y,\xi)\in \mathcal{T}$
\begin{multline}
 K^{--}_{ij}(y,\xi)=-\frac{\Sigma_{ij}^{--}}{\mu_i-\mu_j} \gamma_{ij}(y,\xi)-(1-\gamma_{ij}(y,\xi))\frac{1}{\mu_j}\sum_{p=1}^k \lambda_p \\ q_{pj}\frac{\Sigma_{ip}^{-+}}{\lambda_p+\mu_i}
 +(1-\gamma_{ij}(y,\xi))\frac{1}{\mu_j}\sum_{p=1}^k \lambda_p q_{pj}\int_0^{\frac{\mu_i}{\mu_j}\frac{\xi}{\lambda_p+\mu_i}} \Big[\sum_{r=1}^k\Sigma_{rp}^{++}\\
 K^{-+}_{ir}(y-\mu_is,\xi+\lambda_ps)
 +\sum_{r=1}^\ell\Sigma_{rp}^{-+}K^{--}_{ir}(y-\mu_is,\xi+\lambda_ps)\Big] \,ds\\-\upsilon_{ij}\int_0^{s^F_{ij}(y,\xi)} \Big[\sum_{p=1}^\ell\Sigma_{pj}^{+-}K^{-+}_{ip}(y+\upsilon_{ij}\mu_is,\xi+\upsilon_{ij}\mu_js)\\+\sum_{p=1}^\ell\Sigma_{pj}^{--}K^{--}_{ip}(y+\upsilon_{ij}\mu_is,\xi+\upsilon_{ij}\mu_js)\Big] \,ds,  
\end{multline}
where 
\begin{equation}
   s^F_{ij}= \left\{
\begin{aligned}
\frac{\xi}{\mu_j}, \quad &\text{if $i\geq j$ and $\mu_i \xi-\mu_j y\leq 0$,}\\
\frac{y-\xi}{\mu_i-\mu_j}, \quad &\text{if $i > j$ and $\mu_i \xi-\mu_j y> 0$,}\\
\frac{y-\xi}{\mu_j-\mu_i},\quad &\text{if $i< j$ and $ \xi\geq  1-\frac{\mu_j}{\mu_i}(1-y)$,}\\
\frac{1-y}{\mu_i},\quad &\text{if $i< j$ and $ \xi<  1-\frac{\mu_j}{\mu_i}(1-y)$,}
\end{aligned}
\right.
\end{equation}

\begin{equation}
   \upsilon_{ij}= \left\{
\begin{aligned}
1, \quad \text{if $i<j$,}\\
-1\quad \text{if $i\geq j$,}
\end{aligned}
\right.
\end{equation}
and 
\begin{equation}
   \gamma_{ij}(y,\xi)= \left\{
\begin{aligned}
0, \quad &\text{if $i\geq j$ and $\mu_i \xi-\mu_j y\leq 0$,}\\
1,\quad &\text{otherwise.}
\end{aligned}
\right.
\end{equation}
Let us denote $\mathbf{K}$ the vector containing all the kernels $K_{ij}^{-+}$ and $K^{--}_{ij}$, reordered line by line and stacked up. We have
\begin{equation}
    \mathbf{K}=\begin{pmatrix}
        K^{-+}_{11}, \cdots, K_{mn}^{-+}, K^{--}_{11}, \cdots K_{mm}^{--}
    \end{pmatrix}^\top.
\end{equation}
The vector $\mathbf{K}$ is the solution of the following integral equation
\begin{equation}
    \label{eq:integ_eq_H}
    \mathbf{K}=\varphi+\Phi(\mathbf{K}),
\end{equation}
with 
\begin{equation}
    \varphi=\begin{pmatrix}
        \varphi_{1,1},\cdots \varphi_{m,n},\varphi_{1+n,n+1},\cdots \varphi_{m+n,k+\ell}
    \end{pmatrix}^\top
\end{equation}
\begin{equation}
    \Phi(\mathbf{K})=\begin{pmatrix}
        \Phi_{11}(\mathbf{K}),\cdots \Phi_{mn}(\mathbf{K}),\Phi_{1+n,n+1}(\mathbf{K}),\cdots
    \end{pmatrix}^\top % \Phi_{m+n,k+\ell}(\mathbf{K})
\end{equation}
where for all $1\leq i\leq \ell$ and all $1\leq j\leq k$
\begin{align}
    \varphi_{ij}&=-\frac{\Sigma_{ij}^{-+}}{\lambda_j+\mu_i},\label{eq:phi_case1} \\
    \Phi_{ij}(\mathbf{K})(y,\xi)&=\int_0^{\frac{y-\xi}{\lambda_j+\mu_i}} \Big[\sum_{p=1}^k\Sigma_{pj}^{++}K^{-+}_{ip}(y-\mu_is,\xi+\lambda_js)\nonumber \\
 &+\sum_{p=1}^\ell\Sigma_{pj}^{-+}K^{--}_{ip}(y-\mu_is,\xi+\lambda_js)\Big] \,ds,
\end{align}
and where for all $1\leq i\leq \ell$ and all $1\leq j\leq \ell$
\begin{multline}
    \varphi_{\ell+i,k+j}=-\frac{\Sigma_{ij}^{--}}{\mu_i-\mu_j} \gamma_{ij}(y,\xi)\\-(1-\gamma_{ij}(y,\xi))\frac{1}{\mu_j}\sum_{r=1}^k \lambda_r q_{rj}\frac{\Sigma_{ir}^{-+}}{\lambda_r+\mu_i},\\
    \Phi_{\ell+i,k+j}(\mathbf{K})(y,\xi)=(1-\gamma_{ij}(y,\xi))\frac{1}{\mu_j}\sum_{r=1}^k \lambda_r q_{rj}\int_0^{\frac{\mu_i}{\mu_j}\frac{\xi}{\lambda_r+\mu_i}} \\\Big[\sum_{p=1}^k\Sigma_{pr}^{++}K^{-+}_{ip}(y-\mu_is,\xi+\lambda_rs)+\sum_{p=1}^\ell\Sigma_{pr}^{-+}K^{--}_{ip}(y-\mu_is,\\
    \xi+\lambda_rs)\Big] \,ds-\upsilon_{ij}\int_0^{s^F_{ij}(y,\xi)} \Big[\sum_{r=1}^\ell\Sigma_{rj}^{+-}
 K^{-+}_{ir}(y+\upsilon_{ij}\mu_is,\xi+\\\upsilon_{ij}\mu_js)+\sum_{r=1}^\ell\Sigma_{rj}^{--}K^{--}_{ir}(y+\upsilon_{ij}\mu_is,\xi+\upsilon_{ij}\mu_js)\Big] \,ds. \label{eq:Phi_case2} 
\end{multline}

Similar integral equations can be obtained for the kernels $K^{++}$ and $K^{+-}$.

\subsubsection{Impact of model mismatch  on backstepping kernels}
Let $\delta \in(0,\delta^\star)$, with $\delta^\star$ determined by Proposition~\ref{prop:closed_ball}. For any $\delta \in [0,\hat \delta]$ and $\mathcal{\bar P}\in\mathcal{B}_{\mathcal{P}}^\delta$, we denote $\bar{K}$ the backstepping kernel computed from~\eqref{eq:K_PDE}--\eqref{eq:K_--} using the parameters $\bar{\Lambda}$, $\bar{\Sigma}$, $\bar{Q}$, and $\bar{R}$.
We state and subsequently prove the following lemma.
\begin{lemma} \label{lemma:convergence_K}
  There exists a constant $C_K>0$ such that for any $\delta\in[0,\hat \delta]$  and $\mathcal{\bar P}\in\mathcal{B}_{\mathcal{P}}^\delta$, for all $(y,\xi) \in \mathcal{T}$, for all $1\leq i,j\leq k+\ell$
\begin{equation}\label{eq:ineg_K}
|\bar{K}_{ij}(y,\xi)-K_{ij}(y,\xi)| \leq  C_K \big(\delta+\mathds{1}_{\bar{D}_{ij}}(y,\xi)\big),
\end{equation}
where $\bar{D}_{ij}$ is a subset of $\mathcal{T}$ satisfying $\nu(\bar{D}_{ij})\leq C_K\delta$.  
\end{lemma}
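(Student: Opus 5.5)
We will work directly with the fixed-point formulation \eqref{eq:integ_eq_H}, $\mathbf{K}=\varphi+\Phi(\mathbf{K})$, together with its perturbed analogue $\bar{\mathbf{K}}=\bar\varphi+\bar\Phi(\bar{\mathbf{K}})$ built from $(\bar\Lambda,\bar\Sigma,\bar Q,\bar R)$. The kernels $K^{++}$ and $K^{+-}$ are handled by the same argument. Subtracting the two equations gives
\[
\bar{\mathbf{K}}-\mathbf{K}=(\bar\varphi-\varphi)+\big(\bar\Phi(\bar{\mathbf{K}})-\bar\Phi(\mathbf{K})\big)+\big(\bar\Phi(\mathbf{K})-\Phi(\mathbf{K})\big).
\]
We will estimate the first and third terms as source terms of the form $C(\delta+\mathds{1}_{D}(y,\xi))$ with $\nu(D)\leq C\delta$. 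The middle term, which is linear in $\bar{\mathbf{K}}-\mathbf{K}$, will be absorbed by a Gronwall/successive-approximation argument.

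\textbf{Uniform a priori bounds.} Since $\mathcal{B}_{\mathcal{P}}^{\hat\delta}$ is compact by Proposition~\ref{prop:closed_ball}, the velocities are uniformly separated: $\bar\mu_i-\bar\mu_j$ for $i\neq j$ and $\bar\lambda_j+\bar\mu_i$ stay bounded away from $0$. Consequently, the iteration of $\bar\Phi$ converges with constants independent of $\bar{\mathcal{P}}$ (the standard weighted-norm argument of \cite{hu2015control,di2018stabilization}). This gives $\sup|\bar{\mathbf{K}}|\leq C$ uniformly, and it also shows that the operator $\bar\Phi$, restricted to bounded functions, admits a resolvent bounded uniformly in $\bar{\mathcal{P}}$.

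\textbf{Source terms.} For $\bar\varphi-\varphi$, every coefficient (for instance $\Sigma_{ij}^{-+}/(\lambda_j+\mu_i)$ and $\Sigma^{--}_{ij}/(\mu_i-\mu_j)$) is Lipschitz in the parameters on the compact set, so it contributes $C\delta$. The only other contribution comes from the switching functions $\gamma_{ij}$, which jump across the lines $\mu_i\xi=\mu_jy$. Moving these to $\bar\mu_i\xi=\bar\mu_jy$ changes $\gamma_{ij}$ only on a wedge of area $O(\delta)$, and this wedge is the first piece of $\bar D_{ij}$.

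For $\bar\Phi(\mathbf{K})-\Phi(\mathbf{K})$, we compare integrals of the bounded but only piecewise continuous $\mathbf{K}$ along slightly different characteristics with slightly different upper limits $s^F_{ij}$. Changes in the integration limits and coefficients contribute $C\delta$, using boundedness of $\mathbf{K}$. The delicate part is $\mathbf{K}(y-\bar\mu_is,\xi+\bar\lambda_js)-\mathbf{K}(y-\mu_is,\xi+\lambda_js)$. Here $\mathbf{K}$ is Lipschitz on each of finitely many polygonal pieces, whose discontinuity lines are generated by the $\gamma$-lines and their reflections along characteristics. The integrand difference is therefore $O(\delta)$ except when the two points lie on different sides of a discontinuity line. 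Since the characteristics are transversal to these lines, a single integral along $s$ picks up only an $O(\delta)$-length exceptional portion, so the integral itself is $O(\delta)$. The region $\bar D_{ij}$ is needed only for the pointwise (non-integrated) jumps from $\gamma_{ij}$ and from the piecewise definition of $s^F_{ij}$.

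\textbf{Absorbing the linear term.} We write $e=\bar{\mathbf{K}}-\mathbf{K}=g+\bar\Phi(e)$ with $|g|\leq C(\delta+\mathds{1}_{D})$. Applying $\bar\Phi$ to $\mathds{1}_D$ yields something of order $O(\delta)$ pointwise, because $\bar\Phi$ integrates along transversal lines and a line crossing $D$ does so over length $O(\delta)$. Hence $\bar\Phi(g)=O(\delta)$, and the uniformly bounded resolvent gives $e=g+O(\delta)$, which is exactly \eqref{eq:ineg_K}.

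\textbf{Expected main obstacle.} The hardest step is the transversality and geometry claim: we must show that the discontinuity set of $\mathbf{K}$ is a finite union of segments that are never parallel to the characteristic directions used in $\Phi$. This is what guarantees that line integrals across the $O(\delta)$-thick bad sets are $O(\delta)$. Our first attempt will be to track the discontinuities explicitly through the characteristic construction of \cite{hu2015control}, using the strict ordering \eqref{order} to exclude parallel directions. The remaining bookkeeping is routine.
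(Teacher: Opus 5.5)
Your proposal is correct and follows essentially the same route as the paper. The paper also writes $\tilde{\mathbf K}=\tilde\varphi+\bar\Phi(\tilde{\mathbf K})$ with $\tilde\varphi=(\bar\varphi-\varphi)+(\bar\Phi(\mathbf K)-\Phi(\mathbf K))$, bounds the source by $C\delta$ plus the $\gamma_{ij}$-wedge indicator, and notes that $\bar\Phi$ applied to that source is already $O(\delta)$; it then sums the successive approximations using the factorial recursive bound of \cite[Proposition~6.1]{hu2015control}, which is what your ``uniformly bounded resolvent'' amounts to. Your transversality discussion, and keeping an indicator term in $\bar\Phi(\mathbf K)-\Phi(\mathbf K)$ from the $(1-\gamma_{ij})$ factor, are more careful than the paper's ``direct computations'' but do not change the argument.
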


Let $\delta\in[0,\hat \delta]$  and $\mathcal{\bar P}\in\mathcal{B}_{\mathcal{P}}^\delta$.
In what follows, we denote by $C_i$ positive constants that do not depend on $\delta$. 
To prove Lemma~\ref{lemma:convergence_K}, we analyze the effect of model mismatch on the kernel $K$. We will first focus on the component $K^{-+}$ and $K^{--}$. With model mismatch, the new kernels $\bar{K}^{-+}$ and $\bar{K}^{--}$ satisfy the integral equation
\begin{equation}
    \label{eq:integ_eq_H_bis} \mathbf{\bar{K}}=\bar{\varphi}+\bar{\Phi}(\mathbf{\bar{K}}),
\end{equation}
where $\mathbf{\bar{K}}$ is the vector containing all the kernels $\bar{K}^{-+}_{ij}$ and $\bar{K}^{--}_{ij}$, reordered line by line and stacked up
\begin{equation}
    \mathbf{\bar{K}}=\begin{pmatrix}
        \bar{K}^{-+}_{11}, \cdots, \bar{K}^{-+}_{mn}, \bar{K}^{--}_{11}, \cdots \bar{K}^{--}_{mm}
    \end{pmatrix}^\top,
\end{equation}
and where $\bar{\varphi}$ and $\bar{\Phi}$ satisfy analogous equations to~\eqref{eq:phi_case1}-\eqref{eq:Phi_case2}, the parameters $\Lambda, \Sigma, Q$ and $R$ being replaced by  $\bar{\Lambda}, \bar{\Sigma}, \bar{Q}$ and $\bar{R}$. Let us denote the difference between the two kernels as $\tilde{\mathbf{K}}=\mathbf{\bar{K}}-\mathbf{K}$. Direct computations yield
\begin{equation}\label{eq:K_tilde}
  \tilde{\mathbf{K}}=\tilde \varphi+\bar{\Phi}(\tilde{\mathbf{K}}),  
\end{equation}
where 
\begin{equation} \label{eq:phi_tilde}
    \tilde \varphi=\underset{\psi}{\underbrace{\bar{\varphi}-\varphi}}+\underset{\tilde{\Phi}(\mathbf{K})}{\underbrace{\bar{\Phi}(\mathbf{K})-\Phi(\mathbf{K})}}.
\end{equation}
In what follows, we will analyze the effect of parameter mismatch of $\psi$ and $\tilde \Phi(\mathbf{K})$. We will then use the method of successive approximation to prove Lemma~\ref{lemma:convergence_K}.

\paragraph*{Analysis of $\psi$} We can rewrite the vector $\psi$ as follows
\begin{equation}
    \psi=\begin{pmatrix}
        \psi_{11},\cdots \psi_{mn},\psi_{1+n,n+1},\cdots \psi_{m+n,k+\ell}
    \end{pmatrix}^\top.
\end{equation}
For all $1\leq i\leq \ell$, all $1\leq j\leq k$, $\psi_{ij}$, and all $(x,y)\in\mathcal{T}$, we have
\begin{align*}
    |\psi_{ij}(y,\xi)|&=\left|\frac{\bar{\Sigma}_{ij}^{-+}}{\bar{\lambda}_j+\bar{\mu}_i}-\frac{\Sigma_{ij}^{-+}}{\lambda_j+\mu_i}\right|\nonumber \\
    &=\left|\frac{\bar{\Sigma}^{-+}_{ij}-\Sigma_{ij}^{-+}}{\lambda_j+\mu_i}+\bar{\Sigma}_{ij}^{-+}\frac{(\lambda_j-\bar{\lambda}_j)+(\mu_i-\bar{\mu}_i)}{(\lambda_j+\mu_i)(\bar{\lambda}_j+\bar{\mu}_i)}\right|.
\end{align*}
Using {Definition~\ref{def:ball}}, we obtain 
\begin{align}
     |\psi_{ij}(x,y)|&\leq C_1 \delta.
\end{align}

For all $1\leq i\leq \ell$, all $1\leq j\leq \ell$, and all $(y,\xi)\in\mathcal{T}$, we obtain, using analogous estimates
\begin{multline}
|\psi_{m+i,n+j}(y,\xi)|\leq 
\left|(\bar{\gamma}_{ij}(y,\xi)-\gamma_{ij}(y,\xi))\frac{\Sigma_{ij}^{--}}{\mu_i-\mu_j}\right|\nonumber\\
+\,\left|\bar{\gamma}_{ij}(y,\xi)\Big(\frac{\bar{\Sigma}_{ij}^{--}}{\bar{\mu}_i-\bar{\mu}_j}-\frac{\Sigma_{ij}^{--}}{\mu_i-\mu_j}\Big)\right|+\,\Bigg|(1-\bar{\gamma}_{ij}(y,\xi))\nonumber \\
\Big[\frac{1}{\mu_j}\sum_{r=1}^k\lambda_r q_{rj}\frac{\Sigma_{ir}^{-+}}{\lambda_k+\mu_i}
-\frac{1}{\mu_j}\sum_{r=1}^k\bar{\lambda}_r \bar{q}_{rj}\frac{\bar{\Sigma}_{ir}^{-+}}{\bar{\lambda}_r+\bar{\mu}_i}\Big]\Bigg|\nonumber \\
+\,\left|(\bar{\gamma}_{ij}(y,\xi)-\gamma_{ij}(y,\xi))\frac{1}{\mu_j}\sum_{r=1}^k\lambda_r q_{rj}\frac{\Sigma_{ir}^{-+}}{\lambda_r+\mu_i}\right|.   
\end{multline} 

We get
\begin{equation}\label{eq:psi_bound}
    |\psi_{m+i,n+j}(y,\xi)|\leq C_2\big(\bar{\gamma}_{ij}(y,\xi)-\gamma_{ij}(y,\xi)\big)+C_3\delta,
\end{equation}
where, for all $(y,\xi)\in \mathcal{T}$, we have 
\begin{equation}
   \bar{\gamma}_{ij}(y,\xi)-\gamma_{ij}(y,\xi)= \left\{
\begin{aligned}
1,\quad &\text{if $i> j$, $\mu_i\xi-\mu_jy\leq 0$},\\
&\text{ and $\bar{\mu}_i\xi-\bar{\mu}_jy> 0$,} \\
-1,\quad &\text{if $i> j$, $\mu_i\xi-\mu_jy> 0$},\\
&\text{and $\bar{\mu}_i\xi-\bar{\mu}_jy\leq 0$,} \\
0, \quad & \text{otherwise.}
\end{aligned}
\right.
\end{equation}
The function $\bar{\gamma}_{ij}(y,\xi)-\gamma_{ij}(y,\xi)$ does not necessarily go to zero when $\delta \rightarrow 0$. However, the measure of the domain where it is nonzero goes to zero as $\delta \rightarrow 0$, as illustrated in Figure~\ref{Fig_D_Delta}. More precisely, we obtain for all $(y,\xi)\in\mathcal{T}$,
\begin{equation}\label{eq:gamma_bound}
    |\bar{\gamma}_{ij}(y,\xi)-\gamma_{ij}(y,\xi)|\leq \mathds{1}_{\bar{D}_{ij}}(y,\xi),
\end{equation}
where $\nu(\bar{D}_{ij})\leq C_4 \delta$. All in all, for all $(y,\xi)\in \mathcal{T}$, $|\psi_{m+i,n+j}(y,\xi)|$ can be bounded by the sum of a constant multiplied by $\delta$ and a bounded function supported on a set whose Lebesgue measure decreases linearly with~$\delta$.

\begin{figure*}[htb]
\begin{center}
\scalebox{1}{
\begin{tikzpicture}[scale=1.7]

  % Axes
  \draw[->] (-0.2,0) -- (3.3,0) node[right] {$y$};
  \draw[->] (0,-0.2) -- (0,3.3) node[above] {$\xi$};

  % Sommets
  \coordinate (A) at (0,0);
  \coordinate (B) at (3,0);
  \coordinate (C) at (3,3);

  % Triangle
  \draw[thick,black] (A) -- (B) -- (C) -- cycle;
  \node at (2.2,1.8) {$\mathcal{T}$};

  % Graduations
  \node[below left] at (A) {0};
  \node[below] at (B) {1};
  \node[left] at (0,3) {1};
\draw[-] (-0.05,3) -- (0.05,3);
  % Segments
  \coordinate (R) at (3,1.9);
  \coordinate (Bl) at (3,1.3);

  % Zone grisée
  \fill[gray!30] (A) -- (R) -- (Bl) -- cycle;
  \node at (2.6,1.4) {\small $\bar{D}_{ij}$};

  % Segment rouge
  \draw[thick,red] (A) -- (R) node[midway,above,sloped] {$\mu_i \xi - \mu_j y = 0$};

  % Segment bleu
  \draw[thick,blue] (A) -- (Bl) node[midway,below,sloped] {$\bar{\mu}_i \xi - \bar{\mu}_j y = 0$};

  % Accolade
  \draw[thick,orange] (R) -- (Bl) node[midway,right] {$\dfrac{\mu_j}{\mu_i}-\dfrac{\bar{\mu}_j}{\bar{\mu}_i}<C_4\delta$};
  
   % \draw [decorate,decoration={brace,amplitude=6pt,mirror}] (3,1.3) -- (3,1.7)
   %   node[midway,xshift=1.2cm] {$\dfrac{\mu_j}{\mu_i}-\dfrac{\bar{\mu}_j}{\bar{\mu}_i}$};

\end{tikzpicture}}
\end{center}
\caption{Schematic representation of the domain $\bar{D}_{ij}$.}
\label{Fig_D_Delta}
\end{figure*}
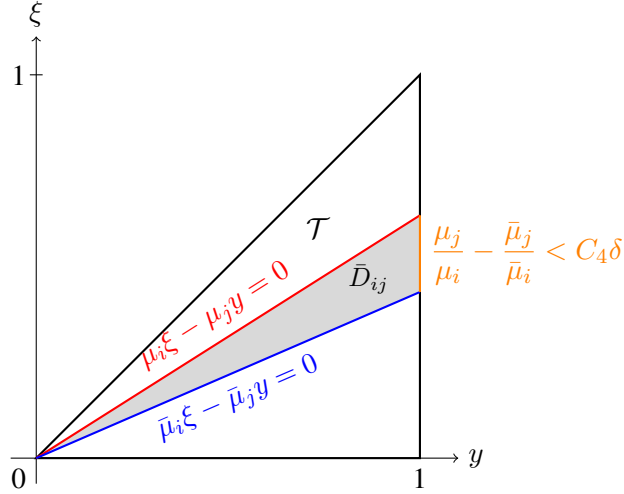

\paragraph*{Analysis of $\tilde{\Phi}(\mathbf{K})$} \label{Sec_Analysis_tilde_phi}
Assume that $\bar{\lambda}_j+\bar{\mu}_i> \lambda_j+\mu_i$ (the result can be easily adjusted for the other case). For all $1\leq i\leq \ell$, all $1\leq j\leq k$, $\psi_{ij}$, and all $(x,y)\in\mathcal{T}$, we have

\begin{multline*}
    \tilde \Phi_{ij}(\mathbf{K})(y,\xi)=\int_0^{\frac{y-\xi}{\bar{\lambda}_j+\bar{\mu}_i}} \Big[\sum_{r=1}^k\bar{\Sigma}_{rj}^{++}K^{-+}_{ir}(y-\bar{\mu}_is,\xi+\bar{\lambda}_js)\\+\sum_{r=1}^\ell\bar{\Sigma}_{rj}^{-+}
 K^{--}_{ir}(y-\bar{\mu}_is,\xi+\bar{\lambda}_js)\Big] \,ds-\int_0^{\frac{y-\xi}{\lambda_j+\mu_i}} \Big[\sum_{r=1}^k\Sigma_{rj}^{++}\\K^{-+}_{ir}(y-\mu_is,\xi+\lambda_js)\nonumber 
 +\sum_{r=1}^\ell\Sigma_{rj}^{-+}K^{--}_{ir}(y-\mu_is,\xi+\lambda_js)\Big] \,ds,\\
 \leq C_5\int_0^{\frac{y-\xi}{\lambda_j+\mu_i}}|\mathds{1}_{[0,\frac{y-\xi}{\bar{\lambda}_j+\bar{\mu}_i}]}(s)-1|ds+|\int_0^{\frac{y-\xi}{\lambda_j+\mu_i}} \Big[\sum_{r=1}^k\bar{\Sigma}_{rj}^{++} \\
 K^{-+}_{ir}(y-\bar{\mu}_is,\xi+\bar{\lambda}_js)+\sum_{r=1}^\ell\bar{\Sigma}_{rj}^{-+}K^{--}_{ir}(y-\bar{\mu}_is,\xi+\bar{\lambda}_js)\nonumber \\
-\Big[\sum_{r=1}^k\Sigma_{rj}^{++}K^{-+}_{ir}(y-\mu_is,\xi+\lambda_js)+\sum_{r=1}^\ell\Sigma_{rj}^{-+}\\
K^{--}_{ir}(y-\mu_is,\xi+\lambda_js)\Big] \,ds|.
\end{multline*}
 Since the kernels $K^{-+}$ and $K^{--}$ are piecewise continuously differentiable, direct computations yield  
\begin{equation}\label{eq:phi_bound}
    \tilde{\Phi}_{ij}(\mathbf{K})(y,\xi) \;\leq\; C_6\delta,
\end{equation}
%with $\nu\bigl((D^1_\delta)_{ij}\bigr) \leq C \delta$.  
Analogous calculations show that the same bound holds for $\tilde{\Phi}_{m+i,j+n}(\mathbf{K})(x,y)$, for all $1 \leq i,j \leq \ell$.

\paragraph*{Analysis of $\tilde{\mathbf{K}}$} We now analyze the behavior of the quantity $\tilde{\mathbf{K}}$. 
Following the method of successive approximations, we introduce the sequence $\tilde{\mathbf{K}}^q$ defined by $\tilde{\mathbf{K}}^0=0$ and, for all $q\geq 1$, 
\begin{equation}
    \tilde{\mathbf{K}}^{q}=\tilde \varphi+\bar{\Phi}(\tilde{\mathbf{K}}^{q-1}).
\end{equation}
For each $q\geq 1$, we define the increment $\Delta \tilde{\mathbf{K}}^q=\tilde{\mathbf{K}}^q-\tilde{\mathbf{K}}^{q-1}$, with $\Delta \tilde{\mathbf{K}}^0=\tilde \varphi$. 
Hence, for all $(y,\xi)\in\mathcal{T}$, we have
\begin{equation}
    \tilde{\mathbf{K}}(y,\xi)=\sum_{q=0}^\infty \Delta \tilde{\mathbf{K}}^q(y,\xi).
\end{equation}

In particular, $\Delta \tilde{\mathbf{K}}^1=\bar{\Phi}(\tilde \varphi)$. 
Adjusting the computations done above, it follows that for all $1\leq i \leq k\ell+\ell^2$, and for all $(y,\xi)\in\mathcal{T}$,
\begin{equation}
   |\Delta \tilde{\mathbf{K}}_i^1(y,\xi)| \leq C_6\delta.%+\mathds{1}_{D^1_\delta}(y,\xi))\bigr).  
\end{equation}
%with $\nu\bigl(D^1_\delta\bigr) \leq C \delta$. 
We recall the following lemma from \cite[Proposition~6.1]{hu2015control}: 

\begin{lemma}\label{lemma:rec_bound}
   Let $q\geq 1$. Assume that for all $(y,\xi)\in\mathcal{T}$ and for all $1\leq i\leq k\ell+\ell^2$,
   \begin{equation*}
       |\Delta \tilde{\mathbf{K}}_i^q(y,\xi)| \leq C\delta M^q\frac{(y-(1-\epsilon_0)\xi)^q}{q!},
   \end{equation*}
   where $|\Delta \tilde{\mathbf{K}}_i^q(y,\xi)|$ denotes the $i^{\text{th}}$ component of $\Delta \tilde{\mathbf{K}}^q(y,\xi)$, $C>0$, and where $\epsilon_0$ and $M$ are two appropriately chosen constants. 
   Then, 
    \begin{equation*}
       |\Delta \tilde{\mathbf{K}}_i^{q+1}(y,\xi)| \leq C\delta M^{q+1}\frac{(y-(1-\epsilon_0)\xi)^{q+1}}{(q+1)!}.
   \end{equation*}
\end{lemma}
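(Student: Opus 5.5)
The plan is to use linearity of $\bar\Phi$ together with a contraction-type estimate along characteristics. Since $\tilde{\mathbf{K}}^{q+1}=\tilde\varphi+\bar\Phi(\tilde{\mathbf{K}}^{q})$ and $\bar\Phi$ is linear, we have $\Delta\tilde{\mathbf{K}}^{q+1}=\bar\Phi(\Delta\tilde{\mathbf{K}}^{q})$. It therefore suffices to insert the induction hypothesis into each integral term of $\bar\Phi_{ij}$ and $\bar\Phi_{\ell+i,k+j}$ (the barred analogues of the expressions leading to \eqref{eq:Phi_case2}) and bound the resulting integrals. The central observation is the following. Each integrand is evaluated at a point $(y',\xi')$ moving linearly in $s$ along a characteristic direction $(a,b)$, so the quantity $\zeta(s)\triangleq y'-(1-\epsilon_0)\xi'$ is affine in $s$ with slope $a-(1-\epsilon_0)b$. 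If this slope is $\le -c<0$ for a constant $c$ independent of $\delta$, then
\[
\int_0^{s_{\max}} \frac{\zeta(s)^q}{q!}\,ds\le \frac{\zeta(0)^{q+1}}{c\,(q+1)!}=\frac{(y-(1-\epsilon_0)\xi)^{q+1}}{c\,(q+1)!},
\]
which produces exactly the factorial gain required.

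The first step is to verify the sign of the slope in every term, and this determines $\epsilon_0$. For the $\bar K^{-+}$ equation and the first (reflection) integral in the $\bar K^{--}$ equation, the direction is $(-\bar\mu_i,\bar\lambda_r)$, so the slope is $-\bar\mu_i-(1-\epsilon_0)\bar\lambda_r<0$ for any $\epsilon_0<1$. For the last integral with $\upsilon_{ij}=-1$ ($i\ge j$), the direction is $(-\bar\mu_i,-\bar\mu_j)$, giving slope $-\bar\mu_i+(1-\epsilon_0)\bar\mu_j$. This is negative for $i>j$ because $\bar\mu_i>\bar\mu_j$, while for $i=j$ the contribution is absent or harmless since $\Sigma_{ii}=0$. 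For $\upsilon_{ij}=1$ ($i<j$), the direction is $(\bar\mu_i,\bar\mu_j)$ and the slope is $\bar\mu_i-(1-\epsilon_0)\bar\mu_j$. This is negative provided $\epsilon_0<1-\bar\mu_i/\bar\mu_j$. Because $\mathcal{\bar P}\in\mathcal{B}_{\mathcal{P}}^{\delta}$ with $\delta\le\hat\delta<\delta^\star$, the ordering \eqref{order} holds with gaps bounded below uniformly in $\delta$. Hence one fixed $\epsilon_0>0$ and one fixed $c>0$ work for all admissible perturbations. I also need $\zeta(s)\ge0$ along the path inside $\mathcal{T}$, which holds since $y'\ge\xi'$ there.

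The second step is the assembly. Each component of $\bar\Phi$ is a finite sum, with at most $N\triangleq k+\ell+k(k+\ell)$ terms, of integrals with coefficients bounded by $B\triangleq\max(|\bar\Sigma|,|\bar\lambda_r\bar q_{rj}|/\bar\mu_j)$. These bounds are again uniform on $\mathcal{B}_{\mathcal{P}}^{\hat\delta}$, since \eqref{eq:R_delta}--\eqref{eq:Lambda_delta} keep the parameters within a compact set. The indicator factors $(1-\bar\gamma_{ij})$ are bounded by one. Combining these with the slope estimate yields
\[
|\Delta\tilde{\mathbf{K}}^{q+1}_i|\le \frac{NB}{c}\,C\delta M^q\frac{(y-(1-\epsilon_0)\xi)^{q+1}}{(q+1)!},
\]
so choosing $M\ge NB/c$ at the outset (independently of $q$ and $\delta$) closes the induction.

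The main obstacle I expect is the reflection term in the $\bar K^{--}$ equation. Its integration range $\frac{\bar\mu_i}{\bar\mu_j}\frac{\xi}{\bar\lambda_r+\bar\mu_i}$ is tied to the boundary $\xi=0$ rather than to the diagonal. For this term I must check that the starting value of $\zeta$ is still dominated by $y-(1-\epsilon_0)\xi$, and that the path stays in $\mathcal{T}$, so that the same integral estimate applies. If the starting point differs from $(y,\xi)$, I would first try to show that $\zeta$ at the start is at most $y-(1-\epsilon_0)\xi$, possibly after shrinking $\epsilon_0$ further, using the explicit geometry of the characteristic reflected at $\xi=0$.
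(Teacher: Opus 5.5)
Your route is the right one and is essentially the argument behind the cited result. The paper gives no proof of this lemma and imports it from Hu et al.~\cite{hu2015control}, whose estimate rests on the same mechanism: write $\Delta\tilde{\mathbf{K}}^{q+1}=\bar\Phi(\Delta\tilde{\mathbf{K}}^{q})$, then integrate $(y-(1-\epsilon_0)\xi)^q/q!$ along characteristics on which this quantity decreases at a rate bounded below.

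There is, however, one wrongly justified step, and it concerns exactly the reason $\epsilon_0$ appears. For $i=j$ the last integral in the $\bar K^{--}_{ii}$ equation is not absent. Its integrand is $\sum_{r=1}^{k}\bar\Sigma^{+-}_{ri}\bar K^{-+}_{ir}+\sum_{r=1}^{\ell}\bar\Sigma^{--}_{ri}\bar K^{--}_{ir}$, and $\Sigma_{ii}=0$ removes only the term $r=i$ of the second sum; already for $k=\ell=1$ the integrand is $\bar\Sigma^{+-}\bar K^{-+}$. Along the direction $(-\bar\mu_i,-\bar\mu_i)$ the quantity $y-\xi$ is constant, so with $\epsilon_0=0$ there would be no factorial gain at all. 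This diagonal case is why the bound uses $(1-\epsilon_0)\xi$. With $\epsilon_0>0$ your own slope formula gives $-\epsilon_0\bar\mu_i\le-\epsilon_0(1-\hat\delta)\mu_1<0$, so the term is covered after all. Thus $\epsilon_0$ is constrained from both sides: it must be positive because of $i=j$, and below $\min_{i<j}(1-\bar\mu_i/\bar\mu_j)$ because of $i<j$.

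The obstacle you anticipate for the reflected term dissolves without shrinking $\epsilon_0$. The condition $\bar\gamma_{ij}(y,\xi)=0$ forces $i\ge j$. The leg from $(y,\xi)$ down to $\xi=0$ is the $\upsilon_{ij}=-1$ integral already treated. The reflected leg starts at $(y-\frac{\bar\mu_i}{\bar\mu_j}\xi,0)$, where $\zeta=y-\frac{\bar\mu_i}{\bar\mu_j}\xi\le y-\xi\le y-(1-\epsilon_0)\xi$ because $\bar\mu_i\ge\bar\mu_j$. It then runs in direction $(-\bar\mu_i,\bar\lambda_p)$ up to the diagonal. The integration limits and evaluation points as printed in the appendix do not display this geometry literally, which is likely what confused you. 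On that leg, your constant $B$ must bound the products of $\bar\lambda_p\bar q_{pj}/\bar\mu_j$ with entries of $\bar\Sigma$, not just the individual factors; this is harmless.

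With these repairs, your direct proof has one advantage over the bare citation. It makes explicit that $\epsilon_0$, $c$ and $M$ are uniform over $\mathcal{B}_{\mathcal{P}}^{\delta}$ for $\delta\le\hat\delta$, thanks to the velocity gaps secured by $\hat\delta<\delta^\star$. That uniformity is what makes $C_K$ in Lemma~\ref{lemma:convergence_K} independent of $\delta$, whereas Hu et al.\ work with one fixed parameter set.
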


We have 
\begin{equation}
   |\tilde{\mathbf{K}}_i(y,\xi)|=\left|\sum_{q=0}^\infty\Delta \tilde{\mathbf{K}}_i^q(y,\xi)\right| 
   \leq \left|\sum_{q=1}^\infty\Delta \tilde{\mathbf{K}}_i^q(y,\xi)\right|+|\tilde \varphi_i|.
\end{equation}
Consequently, combining Lemma~\ref{lemma:rec_bound} with \eqref{eq:psi_bound}, \eqref{eq:gamma_bound}, and \eqref{eq:phi_bound}, we obtain for all $1\leq i \leq \ell$ and $1\leq j \leq k$
\begin{align*}
   & |\bar{K}^{-+}_{ij}(y,\xi)-K_{ij}^{-+}(y,\xi)| \leq  C_7 \big(\delta+\mathds{1}_{\bar{D}^{+-}}(y,\xi)\big),
   \end{align*}
   and for all $1\leq i \leq \ell$ and $1\leq j \leq \ell$
   \begin{align*}
    &|\bar{K}^{--}_{ij}(y,\xi)-K_{ij}^{--}(y,\xi)| \leq  C_8 \big(\delta+\mathds{1}_{\bar{D}^{--}}(y,\xi)\big),
\end{align*}
where $\bar{D}^{+-}$ and $\bar{D}^{--}$ are subsets of $\mathcal{T}$ satisfying $\nu(\bar{D}^{+-})\leq C_9\delta$ and $\nu(\bar{D}^{--})\leq C_{10}\delta$. Similar inequalities can be obtained for $K^{++}$ and $K^{+-}$, establishing~\eqref{eq:ineg_K}, which completes the proof of Lemma~\ref{lemma:convergence_K}.

\subsubsection{Impact of model mismatch on the kernel of the distributed delay in IDE (\ref{sys-final})}
The proof of Lemma~\ref{lemma:convergence_w} is based on analyzing the effect of model mismatch on the kernel $w_1$. {For a given set of parameters $\mathcal{P}$ of \eqref{eq:hyperbolic_couple} satisfying \eqref{eq:original_order}, let $\delta\in[0,\hat \delta]$, with $\hat{\delta}\in(0,\delta^{\star})$ with $\delta^{\star}$ defined in Proposition \ref{prop:closed_ball} and $\mathcal{\bar P}\in\mathcal{B}_{\mathcal{P}}^\delta$. }
We first examine its impact on the kernel $L$ and on the functions $G^{+-}$, $G^{-+}$, $N^+$, $N^-$, and the kernel $F$. 
In what follows, we denote with a bar these functions when they are computed using the parameters 
$\bar{\Lambda}$, $\bar{\Sigma}$, $\bar{Q}$, and $\bar{R}$.

\paragraph*{Effect of model mismatch on the kernel $L$}
The kernel \(L\) is defined by the Volterra equation~\eqref{eq:def_L}. 
It can be shown~\cite{auriol2024contributions} that it satisfies kernel equations analogous to those of $K$. 
Therefore, the computations presented above can be adapted to obtain, for all $(y,\xi)\in \mathcal{T}$, for all $1\leq i,j \leq k+\ell$
\begin{equation}
   |L_{ij}(y,\xi)-\bar{L}_{ij}(y,\xi)|\leq C_{11}\delta+C\mathds{1}_{\bar{D}}(y,\xi),
\end{equation}
where $\bar{D}$ is a subset of $\mathcal{T}$ such that $\nu\bar{D}\leq C_{12}\delta$. 
As before, the notation $\bar{D}$ will be overloaded in the sequel.

\paragraph*{Effect of model mismatch on the functions $G^{-+}$, $G^{--}$, $N^+, N^-$}
The functions $G^{-+}$, $G^{--}$, $N^+$, and $N^-$ are directly obtained from the kernels $K$ and $L$ through 
equations~\eqref{eq:def_G_-+}--\eqref{eq:def_N_-}. 
Since these functions are piecewise continuous, it follows immediately that, for all $y\in[0,1]$,  and all $1\leq i\leq k,~1\leq j \leq \ell$
\begin{equation*}
   |G^{+-}(y)-\bar{G}^{+-}_{ij}(y)|\leq C_{13}(\delta+\mathds{1}_{\bar{D}}(0,y)),  
\end{equation*}
\begin{equation*}
   |N_{ij}^{+}(y)-\bar{N}^{+}_{ij}(y)|\leq C_{15}(\delta+\mathds{1}_{\bar{D}}(1,y)), 
\end{equation*}
and for all $1\leq i,j \leq \ell$
\begin{equation*}
   |G_{ij}^{--}(y)-\bar{G}^{--}_{ij}(y)|\leq C_{14}(\delta+\mathds{1}_{\bar{D}}(0,y)), 
\end{equation*}
\begin{equation*}
   |N_{ij}^{-}(y)-\bar{N}^{-}_{ij}(y)|\leq C_{16}(\delta+\mathds{1}_{\bar{D}}(1,y)).
\end{equation*}

\paragraph*{Effect of model mismatch on the kernel $F$}
The kernel $F$ is computed from the functions $G^{--}$ by applying the method of characteristics 
to equations~\eqref{eq:F_PDE}--\eqref{eq:F_BC}. 
Straightforward calculations yield, for all $(y,\xi)\in \mathcal{T}$, 
\begin{equation*}
   |F_{ij}(y,\xi)-\bar{F}_{ij}(y,\xi)|\leq C_{17}(\delta+\mathds{1}_{\bar{D}}(y,\xi)),~1\leq i,j\leq \ell.
\end{equation*}

\paragraph*{Effect of model mismatch on the kernel $w_1$}
Finally, adapting the computations of~\cite{Auriol2019a}, we can derive the integral equation satisfied by 
$w_-(t,1)$ in the presence of model mismatch. We immediately obtain equation~\eqref{eq:ineg_w}.

\bibliographystyle{plain}
\bibliography{ref}

@article{di2018stabilization,
  title={Stabilization of coupled linear heterodirectional hyperbolic {PDE--ODE} systems},
  author={Di Meglio, F. and Bribiesca  Argomedo, F. and Hu, L. and Krstic, M.},
  journal={Automatica},
  volume={87},
  pages={281--289},
  year={2018},
  publisher={Elsevier}
}

@Book{yoshida1960lectures,
  Title                    = {Lectures on differential and integral equations},
  Author                   = {Yoshida, K.},
  Publisher                = {Interscience Publishers},
  Year                     = {1960},
  Volume                   = {10}
}

@article{hu2015control,
  title={Control of homodirectional and general heterodirectional linear coupled hyperbolic {PDE}s},
  author={Hu, L. and Di Meglio, F. and Vazquez, R. and Krstic, M.},
  journal={IEEE Transactions on Automatic Control},
  volume={61},
  number={11},
  pages={3301--3314},
  year={2015},
  publisher={IEEE}
}

@Article{auriol2018delay,
  Title                    = {Delay-robust stabilization of a hyperbolic {P}{D}{E}--{O}{D}{E} system},
  Author                   = {Auriol, J. and Bribiesca Argomedo, F. and Bou Saba, D. and Di Loreto, M. and Di Meglio, F.},
  Journal                  = {Automatica},
  Year                     = {2018},
  Pages                    = {494--502},
  Volume                   = {95},

  Publisher                = {Elsevier}
}

@article{auriol2020robust,
  title={Robust output feedback stabilization for two heterodirectional linear coupled hyperbolic {PDEs}},
  author={Auriol, J. and Di Meglio, F.},
  journal={Automatica},
  volume={115},
  pages={108896},
  year={2020},
  publisher={Elsevier}
}

@article{hu2019boundary,
  title={Boundary exponential stabilization of 1-dimensional inhomogeneous quasi-linear hyperbolic systems},
  author={Hu, L. and Vazquez, R. and Di Meglio, F. and Krstic, M.},
  journal={SIAM Journal on Control and Optimization},
  volume={57},
  number={2},
  pages={963--998},
  year={2019},
  publisher={SIAM}
}

@article{coron2017finite,
  title={Finite-time boundary stabilization of general linear hyperbolic balance laws via {Fredholm} backstepping transformation},
  author={Coron, J.-M. and Hu, L. and Olive, G.},
  journal={Automatica},
  volume={84},
  pages={95--100},
  year={2017},
  publisher={Elsevier}
}

@article{ prevpaper,
  title={On the filtered spectral abscissa of delay-difference equations and its role in the boundary control of hyperbolic PDEs},
  author={Michiels, W. and Auriol, J. and Bribiesca-Argomedo, F.},
  journal="{SIAM} Journal on Control and Optimization",
note = "Submitted",
  year={2025},
}

@article{auriol2020closed,
  title={Closed-loop tool face control with the bit off-bottom},
  author={Auriol, J. and Shor, R. J. and Aarsnes, U. J. F. and Di Meglio, F.},
  journal={Journal of Process Control},
  volume={90},
  pages={35--45},
  year={2020},
  publisher={Elsevier}
}

@article{auriol2022comparing,
  title={Comparing advanced control strategies to eliminate stick-slip oscillations in drillstrings},
  author={Auriol, J. and Boussaada, I. and Shor, R. J. and Mounier, H. and Niculescu, S.-I.},
  journal={IEEE Access},
  volume={10},
  pages={10949--10969},
  year={2022},
  publisher={IEEE}
}

@article{diagne2017control,
  title={Control of shallow waves of two unmixed fluids by backstepping},
  author={Diagne, M. and Tang, S-X. and Diagne, A. and Krstic, M.},
  journal={Annual Reviews in Control},
  volume={44},
  pages={211--225},
  year={2017},
  publisher={Elsevier}
}

@article{saba2019stability,
  title={{Stability Analysis for a Class of Linear $2\times 2$ Hyperbolic {PDE}s Using a Backstepping Transform}},
  author={Bou Saba, D.  and Bribiesca-Argomedo, F. and Auriol, J. and Di Loreto, M. and Di Meglio, F.},
  journal={IEEE Transactions on Automatic Control},
  volume={65},
  number={7},
  pages={2941--2956},
  year={2019},
  publisher={IEEE}
}

@Book{auriol2024contributions,
  author    = {Auriol, J.},
  title     = {Contributions to the robust stabilization of networks of hyperbolic systems},
  publisher = {HDR, Universit{\'e} Paris Saclay},
  year      = {2024},
}

@article{schunk1975transport,
  title={Transport equations for aeronomy},
  author={Schunk, R},
  journal={Planetary and Space Science},
  volume={23},
  number={3},
  pages={437--485},
  year={1975},
  publisher={Elsevier}
}

@article{espitia2022traffic,
  title={Traffic flow control on cascaded roads by event-triggered output feedback},
  author={Espitia, N. and Auriol, J. and Yu, H. and Krstic, M.},
  journal={International Journal of Robust and Nonlinear Control},
  volume={32},
  number={10},
  pages={5919--5949},
  year={2022},
  publisher={Wiley Online Library}
}

@book{bastin2016stability,
  title={Stability and boundary stabilization of 1-d hyperbolic systems},
  author={Bastin, G and Coron, J-M},
  volume={88},
  year={2016},
  publisher={Springer}
}

@article{hayat2021boundary,
  title={Boundary stabilization of {1D} hyperbolic systems},
  author={Hayat, A.},
  journal={Annual Reviews in Control},
  volume={52},
  pages={222--242},
  year={2021},
  publisher={Elsevier}
}

@article{vazquez2024backstepping,
  title={Backstepping for Partial Differential Equations},
  author={Vazquez, R. and Auriol, J. and Bribiesca-Argomedo, F. and Krstic, M.},
  journal={Automatica},
  year={2025}
}

@Article{DAlembert,
  author    = {D'Alembert, J.},
  journal   = {Histoire de l'Académie Royale des Sciences et des Belles Lettres de Berlin},
  title     = {Suite des recherches sur la courbe que forme une corde tendue, mise en vibration},
  year      = {1749},
  pages     = {220-249},
}

@Article{russell1991neutral,
  author    = {Russell, D.L.},
  title     = {Neutral {PDE} canonical representations of hyperbolic systems},
  pages     = {129--166},
  journal   = {The Journal of Integral Equations and Applications},
  publisher = {JSTOR},
  year      = {1991},
}

@Article{karafyllis2014relation,
  author    = {Karafyllis, I. and Krstic, M.},
  journal   = {ESAIM: Control, Optimisation and Calculus of Variations},
  title     = {On the relation of delay equations to first-order hyperbolic partial differential equations},
  year      = {2014},
  issn      = {1262-3377},
  number    = {3},
  pages     = {894--923},
  volume    = {20},
  publisher = {EDP Sciences},
}

@Article{Auriol2019a,
  author    = {Auriol, J. and Di Meglio, F.},
  title     = {An explicit mapping from linear first order hyperbolic {P}{D}{E}s to difference systems},
  pages     = {144--150},
  volume    = {123},
  journal   = {Systems \& Control Letters},
  publisher = {Elsevier},
  year      = {2019},
}

@Article{Auriol2023,
  author    = {Auriol, J. and Bribiesca-Argomedo, F. and Di Meglio, F.},
  title     = {Robustification of stabilizing controllers for {ODE}-{PDE}-{ODE} systems: a filtering approach},
  pages     = {110724},
  volume    = {147},
  journal   = {Automatica},
  publisher = {Elsevier},
  year      = {2023},
}

@ARTICLE{ wimfilter,
	author={Michiels, W.},
	journal={IEEE Transactions on Automatic Control}, 
	title={To Filter or Not to Filter? Impact on Stability of Delay-Difference and Neutral Equations With Multiple Delays}, 
	year={2023},
	volume={68},
	number={6},
	pages={3687-3693},
}

@article{wimneutral2,
author = {Michiels, W. and Vyhl\'idal, T. and Zit\'ek, P. and Nijmeijer, H. and Henrion, D.},
title = "Strong stability of neutral equations with an arbitrary delay dependency structure",
journal ="{SIAM} Journal on Control and Optimization",
volume = "48",
number = "2",
pages = "763-786",
year = "2009"
}

@article { extraH,
	author = "Logemann, H. and Townley, S.",
	title = "The effect of small delays in the feedback loop on the stability of neutral
	systems",
	journal = "Systems \& Control Letters",
	volume = "27",
	year = "1996",
	pages = "267-274",
}

@book{ bookwim,
	address = {Philadelphia, PA},
	author = {Michiels, W. and Niculescu, S.-I.},
	publisher = {Society for Industrial and Applied Mathematics},
	title = {{Stability, Control, and Computation for Time-Delay Systems}},
	year = {2014},
}

@article{ w-stable,
	title = {w-{Stability} of feedback systems},
	author = {Georgiou, T.T. and Smith, M.},
	journal = {Systems \& Control Letters},
	volume = {13},
	number = {4},
	pages = {271-277},
	year = {1989},
	issn = {0167-6911},
}

@article {have:02,
	author= "Hale, J. K. and Verduyn Lunel, S. M.",
	title=" Strong stabilization of neutral functional differential equations",
	journal="IMA Journal of Mathematical Control and Information",
	volume="19",
	pages="5-23",
	year="2002",
}

\ifCLASSOPTIONcaptionsoff
  \newpage
\fi

\end{document}